\newtheorem{Proposition}{Proposition}
\newtheorem{Remark}{Remark}
\numberwithin{Theorem}{section}
\numberwithin{Definition}{section}
\numberwithin{Lemma}{section}
\numberwithin{Algorithm}{section}
\numberwithin{equation}{section}
\newtheorem{theorem}{Theorem}[section]
\newtheorem{lemma}[theorem]{Lemma} 
\newtheorem{assumption}{Assumption}
\newtheorem{premise}{Premise}
\newtheorem{remark}{Remark}
\renewcommand{\thealgorithm}{IP-PMM}
\def\@cline#1-#2\@nil{%
  \omit
  \@multicnt#1%
  \advance\@multispan\m@ne
  \ifnum\@multicnt=\@ne\@firstofone{&\omit}\fi
  \@multicnt#2%
  \advance\@multicnt-#1%
  \advance\@multispan\@ne
  \leaders\hrule\@height\arrayrulewidth\hfill
  \cr
  \noalign{\nobreak\vskip-\arrayrulewidth}}
\begin{document}
%	\title{An Interior Point-Proximal Method of Multipliers for Positive Semi-Definite Programming\\ ERGO Technical Report 20-006}
%\author{Spyridon Pougkakiotis   \and  Jacek Gondzio}
\title{An Interior Point-Proximal Method of Multipliers for Positive Semi-Definite Programming}
\author[*]{Spyridon Pougkakiotis}
\author[*]{Jacek Gondzio}
\affil[*]{School of Mathematics, University of Edinburgh\newline \newline ERGO Technical Report 20--006}
\maketitle

			%\begin{changemargin}{0.8cm}{0.8cm} 
\begin{abstract}
\par In this paper we generalize the Interior Point-Proximal Method of Multipliers (IP-PMM) presented in [\emph{An Interior Point-Proximal Method of Multipliers for Convex Quadratic Programming}, Computational Optimization and Applications, 78, 307--351 (2021)] for the solution of linear positive Semi-Definite Programming (SDP) problems, allowing inexactness in the solution of the associated Newton systems. In particular, we combine an infeasible Interior Point Method (IPM) with the Proximal Method of Multipliers (PMM) and interpret the algorithm (IP-PMM) as a primal-dual regularized IPM, suitable for solving SDP problems. We apply some iterations of an IPM to each sub-problem of the PMM until a satisfactory solution is found. We then update the PMM parameters, form a new IPM neighbourhood, and repeat this process. Given this framework, we prove polynomial complexity of the algorithm, under mild assumptions, and without requiring exact computations for the Newton directions. We furthermore provide a necessary condition for lack of strong duality, which can be used as a basis for constructing detection mechanisms for identifying pathological cases within IP-PMM. 
\end{abstract}

\section{Introduction}
\par Positive Semidefinite Programming (SDP) problems have attracted a lot of attention in the literature for more more than two decades, and have been used to model a plethora of different problems arising from control theory \cite[Chapter 14]{BalkaWang_BOOK_SPRINGER}, power systems \cite{LavaLow_IEEE_TRAN_POW_SYS}, stochastic optimization \cite{Ben-Tal_Nemi_MATH_OR}, truss optimization \cite{Weld_et_al_STRUCT_MULT_OPT}, and many other application areas (e.g. see \cite{BalkaWang_BOOK_SPRINGER,VandBoyd_APPL_NUM_MATH}). More recently, SDP has been extensively used for building tight convex relaxations of NP-hard combinatorial optimization problems (see \cite[Chapter 12]{BalkaWang_BOOK_SPRINGER}, and the references therein).
\par As a result of the seemingly unlimited applicability of SDP, numerous contributions have been made to optimization techniques 
suitable for solving such problems. The most remarkable milestone was achieved by Nesterov and Nemirovskii \cite{NestNemir_BOOK_SIAM}, who designed a polynomially convergent Interior Point Method (IPM) for the class of SDP problems. This led to the development of numerous successful IPM variants for SDP; some of theoretical (e.g. \cite{MizuJarr_MATH_PROG,Zhang_SIAM_J_OPT,ZhouToh_MATH_PROG}) and others of practical nature (e.g. \cite{Bella_Gondz_Porc_arxiv,Bella_Gondz_Porc_MATH_PROG,MOSEK}). While IPMs enjoy fast convergence, in theory and in practice, each IPM iteration requires the solution of a very large-scale linear system, even for small-scale SDP problems. What is worse, such linear systems are inherently ill-conditioned. A viable and successful alternative to IPMs for SDP problems (e.g. see \cite{ZhaoSunToh_SIAM_J_OPT}), which circumvents the issue of ill-conditioning without significantly compromising convergence speed, is based on the so-called Augmented Lagrangian method (ALM), which can be seen as the dual application of the proximal point method (as shown in \cite{ROCK_Math_OR}). The issue with ALMs is that, unlike IPMs, a consistent strategy for tuning the algorithm parameters is not known. Furthermore, polynomial complexity is lost, and is replaced with merely a finite termination. An IPM scheme combined with the Proximal Method of Multipliers (PMM) for solving SDP problems was proposed in \cite{Dehg_Goff_Orban_OMS}, and was interpreted as a primal-dual regularized IPM. The authors established global convergence, and numerically demonstrated the efficiency of the method. However, the latter is not guaranteed to converge to an $\epsilon$-optimal solution in a polynomial number of iterations, or even to find a global optimum in a finite number of steps. Finally, viable alternatives based on proximal splitting methods have been studied in \cite{JiangVander_OPT_ONLINE,Souto_Garcia_Veiga_OPT}. Such methods are very efficient and require significantly less computations and memory per iteration, as compared to IPM or ALM. However, as they are first-order methods, their convergence to high accuracy might be slow. Hence, such methodologies are only suitable for finding approximate solutions with low-accuracy. 
\par In this paper, we are extending the Interior Point-Proximal Method of Multipliers (IP-PMM) presented in \cite{Pougk_Gond_COAP}. In particular, the algorithm in \cite{Pougk_Gond_COAP} was developed for convex quadratic programming problems and assumed that the resulting linear systems are solved exactly. Under this framework, it was proved that IP-PMM converges in a polynomial number of iterations, under mild assumptions, and an infeasibility detection mechanism was established. An important feature of this method is that it provides a reliable tuning for the penalty parameters of the PMM; indeed, the reliability of the algorithm is established numerically in a wide variety of convex problems in \cite{Berga_et_al_NLAA,deSimone_et_al_arxiv,GondPougkPear_arxiv,Pougk_Gond_COAP}. In particular, the IP-PMMs proposed in \cite{Berga_et_al_NLAA,deSimone_et_al_arxiv,GondPougkPear_arxiv} use preconditioned iterative methods for the solution of the resulting linear systems, and are very robust despite the use of inexact Newton directions. In what follows, we develop and analyze an IP-PMM for linear SDP problems, which furthermore allows for inexactness in the solution of the linear systems that have to be solved at every iteration. We show that the method converges polynomially under standard assumptions. Subsequently, we provide a necessary condition for lack of strong duality, which can serve as a basis for constructing implementable detection mechanisms for pathological cases (following the developments in \cite{Pougk_Gond_COAP}). As is verified in \cite{Pougk_Gond_COAP}, IP-PMM is competitive with standard non-regularized IPM schemes, and is significantly more robust. This is because the introduction of regularization prevents severe ill-conditioning and rank deficiency of the associated linear systems solved within standard IPMs, which can hinder their convergence and numerical stability. For detailed discussions on the effectiveness of regularization within IPMs, the reader is referred to \cite{AltmanGondzio_OMS,ArmandBenoist_MATH_PROG,PougkGond_JOTA}, and the references therein. A particularly important benefit of using regularization, is that the resulting Newton systems can be preconditioned effectively (e.g. see the developments in \cite{Berga_et_al_NLAA,GondPougkPear_arxiv}), allowing for more efficient implementations, with significantly lowered memory requirements. We note that the paper is focused on the theoretical aspects of the method, and an efficient, scalable, and reliable implementation would require a separate study. Nevertheless, the practical effectiveness of IP-PMM (both in terms of efficiency, scalability, and robustness) has already been demonstrated for linear, convex quadratic \cite{Berga_et_al_NLAA,GondPougkPear_arxiv,Pougk_Gond_COAP}, and non-linear convex problems \cite{deSimone_et_al_arxiv}.
\par The rest of the paper is organized as follows. In  Section \ref{section preliminaries}, we provide some preliminary background and introduce our notation. Then, in Section \ref{section Algorithmic Framework}, we provide the algorithmic framework of the method. In Section \ref{section Polynomial Convergence}, we prove polynomial complexity of the algorithm, and establish its global convergence. In Section \ref{section Infeasible problems}, a necessary condition for lack of strong duality  is derived, and we discuss how it can be used to construct an implementable detection mechanism for pathological cases. Finally, we derive some conclusions in Section \ref{section conclusions}.
\section{Preliminaries and Notation} \label{section preliminaries}
\subsection{Primal-Dual Pair of SDP Problems}
\par Let the vector space $\mathcal{S}^n \coloneqq \{B \in \mathbb{R}^{n\times n} \colon B = B^\top \}$ be given, endowed with the inner product $\langle A, B \rangle = \textnormal{Tr}(AB)$, where $\textnormal{Tr}(\cdot)$ denotes the trace of a matrix. In this paper, we consider the following primal-dual pair of linear positive semi-definite programming problems, in the standard form:
\begin{equation} \label{non-regularized primal} \tag{P}
\underset{X \in \mathcal{S}^n}{\text{min}} \  \langle C,X\rangle , \ \ \text{s.t.}  \  \mathcal{A}X = b,   \ X \in \mathcal{S}^n_{+}, 
\end{equation}

\begin{equation} \label{non-regularized dual} \tag{D}
\underset{y \in \mathbb{R}^m,\ Z \in \mathcal{S}^n}{\text{max}}  \ b^\top y , \ \ \text{s.t.}\   \mathcal{A}^*y + Z = C,\ Z \in \mathcal{S}^n_{+},
\end{equation}
\noindent where $\mathcal{S}^{n}_{+} \coloneqq \{B \in \mathcal{S}^{n} \colon B \succeq 0\}$, $C,X,Z \in \mathcal{S}^{n}$, $b,y \in \mathbb{R}^m$, $\mathcal{A}$ is a linear operator on $\mathcal{S}^n$, $\mathcal{A}^*$ is the adjoint of $\mathcal{A}$, and $X\succeq 0$ denotes that $X$ is positive semi-definite. We note that the norm induced by the inner product $\langle A, B \rangle = \textnormal{Tr}(AB)$ is in fact the \textit{Frobenius norm}, denoted by $\|\cdot\|_{F}$. Furthermore, the adjoint $\mathcal{A}^* \colon \mathbb{R}^m \mapsto \mathcal{S}^n$ is such that $y^\top \mathcal{A}X = \langle \mathcal{A}^*y, X\rangle,\ \  \forall\ y \in \mathbb{R}^m, \ \forall\ X \in \mathcal{S}^n$.
\par For the rest of this paper, except for Section \ref{section Infeasible problems}, we will assume that the linear operator $\mathcal{A}$ is onto and that problems \eqref{non-regularized primal} and \eqref{non-regularized dual} are both strictly feasible (that is, Slater's constraint qualification holds for both problems). It is well-known that under the previous assumptions, the primal-dual pair \eqref{non-regularized primal}--\eqref{non-regularized dual} is guaranteed to have optimal solution for which strong duality holds (see \cite{NestNemir_BOOK_SIAM}). Such a solution can be found by solving the Karush--Kuhn--Tucker (KKT) optimality conditions for \eqref{non-regularized primal}--\eqref{non-regularized dual}, which read as follows:
\begin{equation} \label{non-regularized F.O.C}
\begin{bmatrix} 
\mathcal{A}^* y + Z -C\\
\mathcal{A} X - b\\
XZ
\end{bmatrix} = \begin{bmatrix}
0\\
0\\
0
\end{bmatrix},\qquad X,\ Z \in \mathcal{S}^n_+.
\end{equation}

\subsection{A Proximal Method of Multipliers}

\noindent The author in \cite{ROCK_Math_OR} presented for the first time the \textit{Proximal Method of Multipliers} (PMM), in order to solve general convex programming problems. Let us derive this method for the pair \eqref{non-regularized primal}--\eqref{non-regularized dual}. Given arbitrary starting point $(X_0,y_0) \in \mathcal{S}^n_{+}\times \mathbb{R}^m$, the PMM can be summarized by the following iteration:
\begin{equation} \label{PMM sub-problem}
\begin{split}
X_{k+1} =&\ \underset{X \in \mathcal{S}^n_+}{\arg\min}\bigg\{\langle C,X\rangle - y_k^\top (\mathcal{A}X - b) + \frac{\mu_k}{2}\|X-X_k\|_F^2 + \frac{1}{2\mu_k}\|AX-b\|_2^2 \bigg\},\\
y_{k+1} = &\ y_k - \frac{1}{\mu_k} (AX_{k+1} - b),
\end{split}
\end{equation}

\noindent where $\mu_k$ is a positive penalty parameter. The previous iteration admits a unique solution, for all $k$.
\par We can write \eqref{PMM sub-problem} equivalently by making use of the \textit{maximal monotone} operator $T_{\mathcal{L}} \colon \mathbb{R}^m\times \mathcal{S}^n  \rightrightarrows \mathbb{R}^m\times \mathcal{S}^n$ (see \cite{ROCK_Math_OR,ROCK_SIAM_J_CONTROL_OPT}), whose graph is defined as:

\begin{equation} \label{Primal Dual Maximal Monotone Operator}
T_{\mathcal{L}}(X,y) \coloneqq \{(V,u): V \in C - \mathcal{A}^*y + \partial \delta_{S^n_{+}}(X),\ u = \mathcal{A}X-b \},
\end{equation}
\noindent where $\delta_{S^n_{+}}(\cdot)$ is an indicator function defined as:
\begin{equation} \label{Indicator function}
\delta_{S^n_{+}}(X) \coloneqq 
     \begin{cases}
       0, &\quad\text{if } X \in \mathcal{S}^n_+, \\
       \infty, &\quad\text{otherwise,} \\ 
     \end{cases}
\end{equation}
\noindent and $\partial(\cdot)$ denotes the sub-differential of a function, hence (from \cite[Corollary 23.5.4]{Rockafellar_BOOK_PRINCETON}): 
\begin{equation*}
Z \in \partial \delta_{S^n_{+}}(X) \Leftrightarrow -Z \in \mathcal{S}^n_+,\ \langle X,Z\rangle = 0.
\end{equation*}
\noindent By convention, we have that $\partial \delta_{\mathcal{S}_+^n(X^*)} = \emptyset$ if $X^* \notin \mathcal{S}_+^n$. Given a bounded pair $(X^*,y^*)$ such that $(0,0) \in T_{\mathcal{L}}(X^*,y^*)$, we can retrieve a matrix $Z^* \in \partial \delta_{S^n_{+}}(X^*)$, using which $(X^*,y^*,-Z^*)$ is an optimal solution for \eqref{non-regularized primal}--\eqref{non-regularized dual}. By defining the \textit{proximal operator}:
\begin{equation} \label{Primal Dual Proximal Operator}
\mathcal{P}_k \coloneqq \bigg(I_{n+m} + \frac{1}{\mu_k}T_{\mathcal{L}}\bigg)^{-1},
\end{equation}
\noindent where $I_{n+m}$ is the identity operator of size $n+m$, and describes the direct sum of the idenity operators of $\mathcal{S}_n$ and $\mathbb{R}^m$, we can express \eqref{PMM sub-problem} as: 
\begin{equation} \label{PMM Operator Subproblem}
(X_{k+1},y_{k+1}) = \mathcal{P}_k(X_k,y_k),
\end{equation}
\noindent and it can be shown that $\mathcal{P}_k$ is single valued and firmly \textit{non-expansive} (see \cite{ROCK_SIAM_J_CONTROL_OPT}).
\subsection{An Infeasible Interior Point Method}
\par In what follows we present a basic infeasible IPM suitable for solving the primal-dual pair \eqref{non-regularized primal}--\eqref{non-regularized dual}. Such methods handle the conic constraints by introducing a suitable logarithmic barrier in the objective (for an extensive study of logarithmic barriers, the reader is referred to \cite{NestNemir_BOOK_SIAM}). At each iteration, we choose a \textit{barrier parameter} $\mu > 0$ and form the logarithmic \textit{barrier primal-dual pair:}

\begin{equation} \label{non-regularized barrier primal} 
\underset{X \in \mathcal{S}^n}{\text{min}} \  \langle C,X\rangle - \mu \ln(\det(X)), \ \ \text{s.t.}  \  \mathcal{A}X = b,  
\end{equation}

\begin{equation} \label{non-regularized barrier dual} 
\underset{y \in \mathbb{R}^m,\ Z \in \mathcal{S}^n}{\text{max}}  \ b^\top y + \mu \ln(\det(Z)) , \ \ \text{s.t.}\   \mathcal{A}^*y + Z = C.
\end{equation}
\noindent The first-order (barrier) optimality conditions of \eqref{non-regularized barrier primal}--\eqref{non-regularized barrier dual} read as follows:
\begin{equation} \label{non-regularized barrier F.O.C}
\begin{bmatrix} 
\mathcal{A}^* y + Z -C\\
\mathcal{A} X - b\\
XZ - \mu I_n
\end{bmatrix} = \begin{bmatrix}
0\\
0\\
0
\end{bmatrix},\qquad X,\ Z \in \mathcal{S}^n_{++},
\end{equation}
\noindent where $\mathcal{S}^n_{++} \coloneqq \{B \in \mathcal{S}^n: B\succ 0\}$. For every chosen value of $\mu$, we want to approximately solve the following non-linear system of equations:
\begin{equation*} 
F_{\sigma,\mu}^{IPM}(w) \coloneqq \begin{bmatrix}
\mathcal{A}^* y + Z -C\\
\mathcal{A} X - b\\
XZ - \sigma \mu I_n
\end{bmatrix} = \begin{bmatrix}
0\\
0\\
0
\end{bmatrix},
\end{equation*}
\noindent where, with a slight abuse of notation, we set $w = (X,y,Z)$. Notice that $F_{\sigma,\mu}^{IPM}(w) = 0$ is a perturbed form of the barrier optimality conditions. In particular, $\sigma \in (0,1)$ is a \textit{centering parameter} which determines how fast $\mu$ will be forced to decrease at the next IPM iteration. For $\sigma = 1$ we recover the barrier optimality conditions in \eqref{non-regularized barrier F.O.C}, while for $\sigma = 0$  we recover the optimality conditions in \eqref{non-regularized F.O.C}.
\par In IPM literature it is common to apply Newton method to solve approximately the system of non-linear equations $F_{\sigma,\mu}^{IPM}(w) = 0$. Newton method is favored for systems of this form due to the \textit{self-concordance} of the logarithmic barrier (see \cite{NestNemir_BOOK_SIAM}). However, a well-known issue in the literature is that the matrix $XZ$ is not necessarily symmetric. A common approach to tackle this issue is to employ a symmetrization operator $H_P : \mathbb{R}^{n\times n} \mapsto \mathcal{S}^n$, such that $H_P(XZ) = \mu I$ if and only if $XZ = \mu I$, given that $X,\ Z \in \mathcal{S}_+^n$. Following Zhang (\cite{Zhang_SIAM_J_OPT}), we employ the following operator: $H_P : \mathbb{R}^{n\times n} \mapsto \mathcal{S}^n$:
\begin{equation} \label{Symmetrization operator}
H_P(B) \coloneqq \frac{1}{2}(PBP^{-1} + (PBP^{-1})^\top ),
\end{equation}
\noindent where $P$ is a non-singular matrix. It can be shown that the central path (a key notion used in IPMs--see \cite{NestNemir_BOOK_SIAM}) can be equivalently defined as $H_P(XZ) = \mu I$, for any non-singular $P$. In this paper, we will make use of the choice $P_k = Z_k^{-\frac{1}{2}}$. For a plethora of alternative choices, the reader is referred to \cite{Todd_OTP_METH_SOFT}. We should note that the analysis in this paper can be tailored to different symmetrization strategies, and this choice is made for simplicity of exposition.  \par At the beginning of the $k$-th iteration, we have $w_k = (X_k, y_k, Z_k)$ and $\mu_k$ available. The latter is defined as $\mu_k = \frac{\langle X_k, Z_k \rangle}{n}$. By substituting the symmetrized complementarity in the last block equation and applying Newton method, we obtain the following system of equations:

\begin{equation} \label{non-regularized Newton system}
\begin{bmatrix}
0 & \mathcal{A}^* & I_n\\
\mathcal{A} & 0 & 0 \\
\mathcal{E}_k & 0 & \mathcal{F}_k
\end{bmatrix} \begin{bmatrix}
\Delta X\\
\Delta y\\
\Delta Z
\end{bmatrix} = \begin{bmatrix}
C - \mathcal{A}^*y - Z_k\\
b - \mathcal{A}X_k\\
\mu I_n - H_{P_k}(X_kZ_k)
\end{bmatrix},
\end{equation} 
\noindent where $\mathcal{E}_k \coloneqq \nabla_X H_{P_k}(X_kZ_k)$, and $\mathcal{F}_k \coloneqq \nabla_Z H_{P_k}(X_kZ_k)$. 

\subsection{Vectorized Format}
\par In what follows we vectorize the associated operators, in order to work with matrices. In particular, given any matrix $B \in \mathbb{R}^{m \times n}$, we denote its vectorized form as $\bm{B}$, which is a vector of size $mn$, obtained by stacking the columns of $B$, from the first to the last. For the rest of this manuscript, any boldface letter denotes a vectorized matrix. Furthermore, if $\mathcal{A}: \mathcal{S}^n \mapsto \mathbb{R}^m$ is a linear operator, we can define it component-wise as $(\mathcal{A}X)_i \coloneqq \langle A_i , X \rangle$, for $i = 1,\ldots,m$, and any $X \in \mathcal{S}^n$, where $A_i \in \mathcal{S}^n$. Furthermore, the adjoint of this operator, that is $\mathcal{A}^*: \mathbb{R}^m \mapsto \mathcal{S}^n$ is defined as $\mathcal{A}^*y \coloneqq \sum_{i = 1}^{m} y_i A_i$, for all $y \in \mathbb{R}^m$. Using this notation, we can equivalently write \eqref{non-regularized primal}--\eqref{non-regularized dual} in the following form:
\begin{equation} \label{non-regularized vectorized primal} 
\underset{X \in \mathcal{S}^n}{\text{min}} \  \langle C,X\rangle , \qquad \text{s.t.}  \  \langle A_i, X \rangle = b_i, \quad i = 1,\ldots,m, \qquad \ X \in \mathcal{S}^n_{+}, 
\end{equation}

\begin{equation} \label{non-regularized vectorized dual} 
\underset{y \in \mathbb{R}^m,\ Z \in \mathcal{S}^n}{\text{max}}  \  \ b^\top y , \qquad
 \text{s.t.}  \   \sum_{i=1}^m y_i A_i + Z = C, \qquad \ Z \in \mathcal{S}^n_{+}.
\end{equation}
\noindent The first-order optimality conditions can be re-written as:
\begin{equation*}
\begin{bmatrix}
A^\top y + \bm{Z} - \bm{C} \\
A\bm{X} - b\\
XZ
\end{bmatrix} = \begin{bmatrix}
0\\
0\\
0
\end{bmatrix}, \qquad X,\ Z \in \mathcal{S}^n_+,
\end{equation*}
\noindent where $A^\top  = [\bm{A}_1\ \bm{A}_2\ \cdots \ \bm{A}_m]$. 
\par Below we summarize any additional notation that is used later in the paper. An iteration of the algorithm is denoted by $k \in \mathbb{N}$. Given an arbitrary matrix $A$ (resp., vector $x$), $A_k$ (resp., $x_k$) denotes that the matrix (resp., vector) depends on the iteration $k$. An optimal solution to the pair \eqref{non-regularized primal}--\eqref{non-regularized dual} will be denoted as $(X^*,y^*,Z^*)$. Optimal solutions of different primal-dual pairs will be denoted using an appropriate subscript, in order to distinguish them (e.g. $(X_r^*,y_r^*,Z_r^*)$ will denote an optimal solution for a PMM sub-problem). Any norm (resp., semi-norm) is denoted by $\| \cdot \|_{\chi}$, where $\chi$ is used to distinguish between different norms (e.g. $\|\cdot\|_2$ denotes the Euclidean norm). Given two matrices $X,\ Y \in \mathcal{S}^n_+$, we write $X \succeq Y$ when $X$ is larger than $Y$ with respect to the Loewner ordering.   Given two logical statements $T_1,\ T_2$, the condition $T_1 \wedge T_2$ is true only when both $T_1$ and $T_2$ are true.  Given two real-valued positive increasing functions $T(\cdot)$ and $f(\cdot)$, we say that $T(x) = O(f(x))$ (resp., $T(x) = \Omega(f(x))$) if there exist $x_0\geq 0,\ c_1 > 0$, such that $T(x) \leq c_1 f(x)$ (resp., $c_2 > 0$ such that $T(x) \geq c_2 f(x)$), for all $x \geq x_0$. We write $T(x) = \Theta(f(x))$ if and only if $T(x) = O(f(x))$ and $T(x) = \Omega(f(x))$. Finally, let an arbitrary matrix $A$ be given. The maximum (resp., minimum) singular value of $A$ is denoted by $\eta_{\max}(A)$ (resp., $\eta_{\min}(A)$). Similarly, the maximum (resp., minimum) eigenvalue of a square matrix $A$ is denoted by $\nu_{\max}(A)$ (resp., $\nu_{\min}(A)$).
\section{An Interior Point-Proximal Method of Multipliers for SDP} \label{section Algorithmic Framework}
\par In this section we present an inexact extension of IP-PMM presented in \cite{Pougk_Gond_COAP}, suitable for solving problems of the form of \eqref{non-regularized primal}--\eqref{non-regularized dual}. Assume that we have available an estimate $\lambda_k$ for a Lagrange multiplier vector at iteration $k$.  Similarly, denote by $\Xi_k \in \mathcal{S}_+^n$ an estimate of a primal solution. As we discuss later, these estimate sequences (i.e. $\{\lambda_k\}, \{\Xi_k\}$) are produced by the algorithm, and represent the dual and primal proximal estimates, respectively. During the $k$-th iteration of the PMM, applied to \eqref{non-regularized primal}, the following proximal penalty function has to be minimized:
\begin{equation} \label{PMM lagrangian}
\begin{split}
\mathcal{L}^{PMM}_{\mu_k} (X;\Xi_k, \lambda_k) \coloneqq \langle C, X \rangle  -\lambda_k^\top (\mathcal{A}X - b) + \frac{1}{2\mu_k}\|\mathcal{A}X-b\|_{2}^2 + \frac{\mu_k}{2}\|X-\Xi_k\|_{F}^2,
\end{split}
\end{equation}
\noindent with $\mu_k > 0$ being some non-increasing penalty parameter. Notice that this is equivalent to the iteration  (\ref{PMM sub-problem}).  We approximately minimize \eqref{PMM lagrangian} by applying one (or a few) iterations of the previously presented infeasible IPM. We alter \eqref{PMM lagrangian} by adding a logarithmic barrier:
\begin{equation} \label{Proximal IPM Penalty}
\mathcal{L}^{IP-PMM}_{\mu_k} (X;\Xi_k, \lambda_k) \coloneqq \mathcal{L}^{PMM}_{\mu_k} (X;\Xi_k, \lambda_k) - \mu_k \log(\det(X)),
\end{equation}
\noindent and we treat $\mu_k$ as the barrier parameter. In order to form the optimality conditions of this sub-problem, we equate the gradient of $\mathcal{L}^{IP-PMM}_{\mu_k}(\cdot;\Xi_k,\lambda_k)$ to the zero vector, i.e.:
\begin{equation*}
C  - \mathcal{A}^* \lambda_k + \frac{1}{\mu_k}\mathcal{A}^*(\mathcal{A}X - b) + \mu_k (X - \Xi_k) - \mu_k X^{-1} = 0.
\end{equation*}
\par Introducing the variables $y = \lambda_k - \frac{1}{\mu_k}(\mathcal{A}X - b)$ and $Z = \mu_k X^{-1}$, yields:
\begin{equation} \label{non-vectorized Proximal IPM FOC}
\begin{split}
\begin{bmatrix}
C - \mathcal{A}^*y - Z + \mu_k(X-\Xi_k)\\
\mathcal{A}X + \mu_k (y - \lambda_k) - b\\
XZ -  \mu_k I_n
\end{bmatrix} = \begin{bmatrix}
0\\
0\\
0
\end{bmatrix} \Leftrightarrow \begin{bmatrix}
\bm{C}- A^\top y - \bm{Z} + \mu_k(\bm{X}-\bm{\Xi}_k)\\
A\bm{X}+ \mu_k (y - \lambda_k) - b\\
H_{P_k}(XZ) -  \mu_k I_n
\end{bmatrix} = \begin{bmatrix}
0\\
0\\
0
\end{bmatrix},
\end{split}
\end{equation}
\noindent where the second system is obtained by introducing the symmetrization in \eqref{Symmetrization operator}, and by vectorizing the associated matrices and operators.
\par Given an arbitrary vector $b \in \mathbb{R}^m$, and matrix $C \in \mathbb{R}^{n \times n}$, we define the semi-norm:
\begin{equation} \label{semi-norm definition}
\|(b,\bm{C})\|_{\mathcal{S}} \coloneqq \min_{X,y,Z}\bigg\{\|(\bm{X},\bm{Z})\|_2\ :\begin{matrix} A\bm{X} = b, \\ A^\top y + \bm{Z} = \bm{C}\end{matrix}\bigg\}.
\end{equation}
\noindent A similar semi-norm was used before in \cite{MizuJarr_MATH_PROG}, as a way to measure infeasibility for the case of linear programming problems. For a discussion of the properties of the aforementioned semi-norm, as well as how to evaluate it (using an appropriate QR factorization, which can be computed in a polynomial time), the reader is referred to \cite[Section 4]{MizuJarr_MATH_PROG}.
\paragraph{Starting Point.}
\par  Let us define the starting point for IP-PMM. For that, we set $(X_0,Z_0) = \rho(I_n,I_n)$, for some $\rho > 0$. We also set $y_0$ to some arbitrary value (e.g. $y_0 = 0$), and $\mu_0 = \frac{\langle X_0, Z_0 \rangle}{n}$. Using the aforementioned triple, we have:
\begin{equation} \label{starting point}
A\bm{X}_0 = b + \bar{b},\ A^\top y_0 + \bm{Z}_0 = \bm{C} + \bm{\bar{C}},\  \Xi_0 = X_0,\ \lambda_0 = y_0.
\end{equation}
\noindent for some $\bar{b} \in \mathbb{R}^m$, and $\bar{C} \in \mathcal{S}^{n}$. 
\paragraph{Neighbourhood.}
\par Below, we describe a neighbourhood in which the iterations of the method should lie. Unlike most path-following methods, we have to define a family of neighbourhoods that depend on the PMM sub-problem parameters. 
\par Given \eqref{starting point}, some $\mu_k$, $\lambda_k$, and $\Xi_k$, we define the \textit{regularized set of centers}:
\begin{equation*}
\mathscr{P}_{\mu_k}(\Xi_k,\lambda_k) \coloneqq \large\{(X,y,Z)\in \mathcal{C}_{\mu_k}(\Xi_k,\lambda_k)\ :\ X \in \mathcal{S}^n_{++},\ Z \in \mathcal{S}^n_{++},\ XZ = \mu_k I_n \large\},
\end{equation*}
\begin{equation*}
\mathscr{C}_{\mu_k}(\Xi_k,\lambda_k) \coloneqq \bigg\{(X,y,Z)\ :\quad \begin{matrix}
A\bm{X} + \mu_k (y-\lambda_k) = b + \frac{\mu_k}{\mu_0} \bar{b},\\
 A^\top y + \bm{Z} - \mu_k(\bm{X}- \bm{\Xi}_k) = \bm{C} +  \frac{\mu_k}{\mu_0}\bm{\bar{C}}
\end{matrix} \bigg\},
\end{equation*}
\noindent where $\bar{b},\ \bar{C}$ are as in \eqref{starting point}. The term set of centers originates from \cite{MizuJarr_MATH_PROG}.
\par We enlarge the previous set, by defining the following set:
\begin{equation*}
\begin{split}
\tilde{\mathscr{C}}_{\mu_k}(\Xi_k,\lambda_k) \coloneqq  \Bigg\{(X,y,Z)\ :\quad \begin{matrix}
A\bm{X} + \mu_k(y-\lambda_k) = b + \frac{\mu_k}{\mu_0} (\bar{b}+\tilde{b}_{k}),\\
 A^\top y + \bm{Z} - \mu_k (\bm{X}- \bm{\Xi}_k) =\bm{C} +  \frac{\mu_k}{\mu_0}(\bm{\bar{C}}+\bm{\tilde{C}}_{k})\\
\|(\tilde{b}_{k},\bm{\tilde{C}}_{k})\|_2 \leq K_N,\ \|(\tilde{b}_{k},\bm{\tilde{C}}_{k})\|_{\mathcal{S}} \leq \gamma_{\mathcal{S}} \rho
\end{matrix} \Bigg\},
\end{split}
\end{equation*}
where $K_N > 0$ is a constant, $\gamma_{\mathcal{S}} \in (0,1)$ and $\rho>0$ is as defined in the starting point.  The vector $\tilde{b}_{k}$ and the matrix $\tilde{C}_{k}$ represent the current scaled (by $\frac{\mu_0}{\mu_k}$) infeasibilities, and will vary depending on the iteration $k$. While these can be defined recursively, it is not necessary. Instead it suffices to know that they satisfy the bounds given in the definition of the previous set. In essence, the previous set requires these scaled infeasibilities to be bounded above by some constants, with respect to the 2-norm as well as the semi-norm defined in \eqref{semi-norm definition}. We can now define a family of neighbourhoods:
\begin{equation} \label{Small neighbourhood}
\mathscr{N}_{\mu_k}(\Xi_k,\lambda_k) \coloneqq \bigg\{(X,y,Z) \in \tilde{\mathscr{C}}_{\mu_k}(\Xi_k,\lambda_k)\ :\quad \begin{matrix}\ X \in \mathcal{S}^n_{++},\ Z \in \mathcal{S}^n_{++},\\ \|H_P(XZ) - \mu I_n\|_F \leq \gamma_{\mu} \mu_k \end{matrix}\bigg\},
\end{equation}
\noindent where $\gamma_{\mu} \in (0,1)$ is a constant restricting the symmetrized complementarity products. Obviously, the starting point defined in \eqref{starting point} belongs to the neighbourhood $\mathscr{N}_{\mu_0}(\Xi_0,\lambda_0)$, with $(\tilde{b}_{0},\bm{\tilde{C}}_{0}) = (0,0)$.  Notice that the neighbourhood depends on the choice of the constants $K_N$, $\gamma_{\mathcal{S}}$, $\gamma_{\mu}$. However, as the neighbourhood also depends on the parameters $\mu_k,\ \lambda_k,\ \Xi_k$, the dependence on the constants is omitted for simplicity of notation. 
\paragraph{Newton System.}
\par As discussed earlier, we employ the Newton method for approximately solving a perturbed form of system \eqref{non-vectorized Proximal IPM FOC}, for all $k$. In particular, we perturb \eqref{non-vectorized Proximal IPM FOC} in order to take into consideration the target reduction of the barrier parameter $\mu_k$ (by introducing the centering parameter $\sigma_k$), as well as to incorporate the initial infeasibility, given our starting point in \eqref{starting point}. In particular, we would like to approximately solve the following system:
\begin{equation} \label{exact non-vectorized Newton System}
\begin{split}
	\begin{bmatrix} 
		-(\mu_k I_n) &   \mathcal{A^*} & I_n\\
		\mathcal{A} & \mu_k I_m & 0\\
		Z_k &  0  &X_k 
	\end{bmatrix} 
	\begin{bmatrix}
		\Delta X_k\\
		\Delta y_k\\
		\Delta Z_k
	\end{bmatrix}
	=  
	\begin{bmatrix}
		(C + \frac{\sigma_k \mu_k}{\mu_0}\bar{C})  - \mathcal{A}^* y_k - Z_k +\sigma_k\mu_k (X_k - \Xi_k)\\
		-\mathcal{A}X_k  -\sigma_k\mu_k (y_k - \lambda_k)+ (b +\frac{\sigma_k \mu_k}{\mu_0}\bar{b}) \\
 		-X_kZ_k + \sigma_{k} \mu_k I_n
	\end{bmatrix},
\end{split}
\end{equation}
\noindent where $\bar{b},\ \bar{C}$ are as in \eqref{starting point}. We note that we could either first linearize the last block equation of \eqref{non-vectorized Proximal IPM FOC} and then apply the symmetrization, defined in \eqref{Symmetrization operator}, or first apply the symmetrization directly to the last block equation of \eqref{non-vectorized Proximal IPM FOC} and then linearize it. Both approaches are equivalent. Hence, following the former approach, we obtain the vectorized Newton system, that has to be solved at every iteration of IP-PMM:
\begin{equation} \label{inexact vectorized Newton System}
\begin{split}
	\begin{bmatrix} 
		-(\mu_k I_{n^2}) &   A^\top  & I_{n^2}\\
		A & \mu_k I_m & 0\\
		E_k &  0  &F_k 
	\end{bmatrix} 
	\begin{bmatrix}
		\bm{\Delta X}_k\\
		\Delta y_k\\
		\bm{\Delta Z}_k
	\end{bmatrix}
	= \\ \begin{bmatrix}
		(\bm{C} + \frac{\sigma_k \mu_k}{\mu_0}\bm{\bar{C}})  -A^\top  y_k - \bm{Z}_k +\sigma_k\mu_k (\bm{X}_k - \bm{\Xi}_k)\\
		-A \bm{X}_k  -\sigma_k\mu_k (y_k - \lambda_k)+ (b +\frac{\sigma_k \mu_k}{\mu_0}\bar{b}) \\
 		-(Z_k^{\frac{1}{2}}\otimes Z_k^{\frac{1}{2}})\bm{X}_k + \sigma_{k} \mu_k \bm{I}_{n}
	\end{bmatrix} + \begin{bmatrix}
	\bm{\mathsf{E}}_{d,k}\\
	\epsilon_{p,k}\\
	\bm{\mathsf{E}}_{\mu,k}
	\end{bmatrix},
\end{split}
\end{equation}
\noindent where $E_k = (Z_k^{\frac{1}{2}} \otimes Z_k^{\frac{1}{2}})$, $F_k = \frac{1}{2}\big(Z_k^{\frac{1}{2}}X_k \otimes Z_k^{-\frac{1}{2}} + Z^{-\frac{1}{2}} \otimes Z_k^{\frac{1}{2}}X_k \big)$, and $(\mathsf{E}_{d,k},\epsilon_{p,k},\mathsf{E}_{\mu,k})$ models potential errors, occurring by solving the symmetrized version of system \eqref{exact non-vectorized Newton System} inexactly (e.g. by using a Krylov subspace method). In order to make sure that the computed direction is accurate enough, we impose the following accuracy conditions:
\begin{equation} \label{Krylov method termination conditions}
\|\bm{\mathsf{E}}_{\mu,k}\|_2 = 0,\qquad \|(\epsilon_{p,k},\bm{\mathsf{E}}_{d,k})\|_2 \leq \frac{\sigma_{\min}}{4\mu_0} K_N \mu_k,\qquad \|(\epsilon_{p,k},\bm{\mathsf{E}_{d,k}})\|_{\mathcal{S}} \leq \frac{\sigma_{\min}}{4\mu_0} \gamma_{\mathcal{S}}\rho \mu_k,
\end{equation}
\noindent where $\sigma_{\min}$ is the minimum allowed value for $\sigma_k$, $K_N,\ \gamma_{\mathcal{S}}$ are constants defined by the neighbourhood in \eqref{Small neighbourhood}, and $\rho$ is defined in the starting point in \eqref{starting point}. Notice that the condition $\|\bm{\mathsf{E}}_{\mu,k}\|_2 = 0$ is imposed without loss of generality, since it can be easily satisfied in practice. For more on this, see the discussion in \cite[Section 3]{ZhouToh_MATH_PROG} and \cite[Lemma 4.1]{Gu_SIAM_J_OPT}. Furthermore, as we will observe in Section \ref{section Polynomial Convergence}, the bound on the error with respect to the semi-norm defined in \eqref{semi-norm definition} is required to ensure polynomial complexity of the method. While evaluating this semi-norm is not particularly practical (and is never evaluated in practice, e.g. see \cite{Berga_et_al_NLAA,deSimone_et_al_arxiv,Pougk_Gond_COAP}), it can be done in a polynomial time (see \cite[Section 4]{MizuJarr_MATH_PROG}), and hence does not affect the polynomial nature of the algorithm. The algorithmic scheme of the method is summarized in Algorithm \ref{Algorithm PMM-IPM}.
\renewcommand{\thealgorithm}{IP--PMM}
\begin{algorithm}[!ht]
\caption{Interior Point-Proximal Method of Multipliers}
    \label{Algorithm PMM-IPM}
    \textbf{Input:}  $\mathcal{A}, b, C$, $\text{tol}$.\\
    \textbf{Parameters:} $0< \sigma_{\min} \leq \sigma_{\max} \leq 0.5$, $K_N > 0$, $0<\gamma_{\mathcal{S}} < 1,\ 0<\gamma_{\mu} < 1$.\\
    \textbf{Starting point:} Set as in \eqref{starting point}.
\begin{algorithmic}
\For {($k= 0,1,2,\cdots$)}
\If {$\Bigg(\bigg(\|A\bm{X}_k - b\|_2 < \text{tol}\bigg) \wedge \bigg(\|\bm{C} - A^\top  y_k - \bm{Z}_k\|_2 < \text{tol}\bigg) \wedge \bigg(\frac{\langle X_k, Z_k \rangle}{n} < \text{tol}\bigg)\Bigg)$}
	\State \Return $(X_k,y_k,Z_k)$.	
\Else
	\State Choose $\sigma_k \in [\sigma_{\min},\sigma_{\max}]$ and solve \eqref{inexact vectorized Newton System} so that \eqref{Krylov method termination conditions} holds.
	\State Choose $\alpha_k$, as the largest $\alpha \in (0,1]$, s.t.  $\mu_k(\alpha)  \leq  \ (1-0.01 \alpha)\mu_k$, and:
	\begin{equation*}
	\begin{split}
	 (X_k +  \alpha_k  & \Delta X_k, y_k +  \alpha_k \Delta y_k, Z_k + \alpha_k \Delta Z_k) \in  \ \mathscr{N}_{\mu_k(\alpha)}(\Xi_k,\lambda_k),\\ 
	\text{where, }\ \ & \mu_{k}(\alpha) = \frac{\langle X_k + \alpha_k \Delta X_k,Z_k + \alpha_k \Delta Z_k\rangle}{n}.
	\end{split}
	\end{equation*}	
	\State Set $(X_{k+1},y_{k+1},Z_{k+1}) = (X_k + \alpha_k \Delta X_k, y_k + \alpha_k \Delta y_k, Z_k + \alpha_k \Delta Z_k)$.
	\State Set $\mu_{k+1} = \frac{\langle X_{k+1},Z_{k+1}\rangle}{n}$.
	\State Let $r_p = A\bm{X}_{k+1} - (b + \frac{\mu_{k+1}}{\mu_0}\bar{b})$, $\bm{R}_d = (\bm{C} + \frac{\mu_{k+1}}{\mu_0}\bm{\bar{C}})- A^\top y_{k+1} -\bm{Z}_{k+1}.$
  	\If {\bigg(\big($\|(r_p,\bm{R}_d)\|_2 \leq K_N \frac{\mu_{k+1}}{\mu_0} \big) \wedge \big(\|(r_p,\bm{R}_d)\|_{\mathcal{S}} \leq \gamma_{\mathcal{S}}\rho \frac{\mu_{k+1}}{\mu_0}\big)$\bigg)}
  		\State $(\Xi_{k+1},\lambda_{k+1}) = (X_{k+1},y_{k+1})$.
  	\Else
  		\State $(\Xi_{k+1},\lambda_{k+1}) = (\Xi_{k},\lambda_{k})$.
  	\EndIf
\EndIf
\EndFor 
\end{algorithmic}
\end{algorithm}
\par Algorithm \ref{Algorithm PMM-IPM} deviates from standard IPM schemes due to the solution of a different Newton system, as well as due to the possible updates of the proximal estimates, i.e. $\Xi_k$ and $\lambda_k$. Notice that when these estimates are updated, the neighbourhood in \eqref{Small neighbourhood} changes as well, since it is parametrized by them. Intuitively, when this happens, the algorithm accepts the current iterate as a sufficiently accurate solution to the associated PMM sub-problem. However, as we will see in Section \ref{section Polynomial Convergence}, it is not necessary for these estimates to converge to a primal-dual solution, for Algorithm \ref{Algorithm PMM-IPM} to converge. Instead, it suffices to ensure that these estimates will remain bounded. In light of this, Algorithm \ref{Algorithm PMM-IPM} is not studied as an inner-outer scheme, but rather as a standard IPM scheme. We will return to this point at the end of Section \ref{section Polynomial Convergence}.
\section{Convergence Analysis} \label{section Polynomial Convergence}

\par In this section we prove polynomial complexity of Algorithm \ref{Algorithm PMM-IPM}, and establish its global convergence. The analysis is modeled after that in \cite{Pougk_Gond_COAP}. We make use of the following two standard assumptions, generalizing those employed in \cite{Pougk_Gond_COAP} to the SDP case.
\begin{assumption} \label{Assumption 1}
The problems \eqref{non-regularized primal} and \eqref{non-regularized dual} are strictly feasible, that is, Slater's constraint qualification holds for both problems. Furthermore, there exists an optimal solution $(X^*, y^*, Z^*)$ and a constant $K_* > 0$ independent of $n$ and $m$ such that $\|(\bm{X}^*,y^*,\bm{Z}^*)\|_F \leq K_* \sqrt{n}$.
\end{assumption}

\begin{assumption} \label{Assumption 2}
The vectorized constraint matrix $A$ of \textnormal{\eqref{non-regularized primal}} has full row rank, that is $\textnormal{rank}(A) = m$. Moreover, there exist constants $K_{A,1} > 0$, $K_{A,2} > 0$,  $K_{r,1} >0 $, and $\ K_{r,2} > 0$, independent of $n$ and $m$, such that:
\begin{equation*}
\eta_{\min}(A) \geq K_{A,1},\quad \eta_{\max}(A) \leq K_{A,2},\quad \|b\|_{\infty}\leq K_{r,1},\quad \|C\|_2 \leq K_{r,2} \sqrt{n}.
\end{equation*}
\end{assumption}
\begin{Remark} \label{Remark 1 on assumptions}
\par Note that the independence of the previous constants from $n$ and $m$ is assumed for simplicity of exposition. In particular, as long as these constants depend polynomially on $n$ (or $m$), the analysis still holds, simply by altering the worst-case polynomial bound for the number of iterations of the algorithm (given later in Theorem \textnormal{\ref{Theorem complexity}}).
\end{Remark}
\begin{Remark} \label{Remark 2 on assumptions} Assumption \textnormal{\ref{Assumption 1}} is a direct extension of that in \textnormal{\cite[Assumption 1]{Pougk_Gond_COAP}}. Given the positive semi-definiteness of $X^*$ and $Z^*$, it implies that $\textnormal{Tr}(X^*) + \textnormal{Tr}(Z^*) \leq 2 K_* n$ (from equivalence of the norms $\|\cdot\|_1$ and $\|\cdot\|_2$), which is one of the assumptions employed in \textnormal{\cite{Zhang_SIAM_J_OPT,ZhouToh_MATH_PROG}}. Notice that we assume  $n > m$, without loss of generality. The theory in this section would hold if $m > n$, simply by replacing $n$ by $m$ in the upper bound of the norm of the optimal solution as well as of the problem data.
\end{Remark}
\par Before proceeding with the convergence analysis, we briefly provide an outline of it, for the convenience of the reader. Firstly, it should be noted that polynomial complexity as well as global convergence of Algorithm \ref{Algorithm PMM-IPM} is proven by induction on the iterations $k$ of the method. To that end, we provide some necessary technical results in Lemmas \ref{Lemma non-expansiveness}--\ref{Lemma tilde point}. Then, in Lemma \ref{Lemma boundedness of x z} we are able to show that the iterates $(X_k,y_k,Z_k)$ of Algorithm \ref{Algorithm PMM-IPM} will remain bounded for all $k$. Subsequently, we provide some additional technical results in Lemmas \ref{Auxiliary Lemma bound on scaled matrices}--\ref{Auxiliary Lemma scaled rhs of third block of Newton system}, which are then used in Lemma \ref{Lemma boundedness Dx Dz}, where we show that the Newton direction computed at every iteration $k$ is also bounded. All the previous are utilized in Lemmas \ref{Lemma step-length-part 1}--\ref{Lemma step-length-part 2}, where we provide a lower bound for the step-length $\alpha_k$ chosen by Algorithm \ref{Algorithm PMM-IPM} at every iteration $k$. Then, $Q$-linear convergence of $\mu_k$ (with $R$-linear convergence of the regularized residuals) is shown in Theorem \ref{Theorem mu convergence}. Polynomial complexity is proven in Theorem \ref{Theorem complexity}, and finally, global convergence is established in Theorem \ref{Theorem convergence for the feasible case}.
\par Let us now use the properties of the proximal operator defined in \eqref{Primal Dual Proximal Operator}. 

\begin{lemma} \label{Lemma non-expansiveness}
Given Assumption \textnormal{\ref{Assumption 1}}, and for all $\lambda \in \mathbb{R}^m$, $\Xi \in \mathcal{S}_+^n$ and $0 \leq \mu < \infty$, there exists  a unique pair $(X_r^*,y_r^*)$, such that $(X_r^*,y_r^*) = \mathcal{P}(\Xi,\lambda),$ $X_r^* \in \mathcal{S}_+^n$, and
\begin{equation} \label{non-expansiveness property}
\|(\bm{X}_r^*,y_r^*)-(\bm{X}^*,y^*)\|_{2} \leq \|(\bm{\Xi},\lambda)-(\bm{X}^*,y^*)\|_{2},
\end{equation}
\noindent where $\mathcal{P}(\cdot)$ is defined as in \eqref{Primal Dual Proximal Operator}, and $(X^*,y^*)$ is such that $(0,0) \in T_{\mathcal{L}}(X^*,y^*)$.
\end{lemma}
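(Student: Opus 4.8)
The plan is to read the statement entirely off the theory of maximal monotone operators, using the three facts already recorded in the excerpt: that $T_{\mathcal{L}}$ is maximal monotone, that its resolvent $\mathcal{P} = (I_{n+m} + \frac{1}{\mu}T_{\mathcal{L}})^{-1}$ is single-valued and firmly non-expansive, and that the vectorization $X \mapsto \bm{X}$ is an isometry from $(\mathcal{S}^n,\|\cdot\|_F)$ onto $(\mathbb{R}^{n^2},\|\cdot\|_2)$, since the trace inner product induces precisely the Frobenius norm.

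First I would settle existence and uniqueness. Because $T_{\mathcal{L}}$ is maximal monotone, Minty's theorem ensures that for every $\mu > 0$ the resolvent $\mathcal{P}$ is single-valued with full domain $\mathcal{S}^n \times \mathbb{R}^m$, so evaluating at $(\Xi,\lambda)$ yields a unique pair $(X_r^*, y_r^*)$. To force $X_r^* \in \mathcal{S}_+^n$ I would unfold the defining inclusion $\mu\big((\Xi,\lambda) - (X_r^*, y_r^*)\big) \in T_{\mathcal{L}}(X_r^*, y_r^*)$; by the definition of $T_{\mathcal{L}}$, its first component reads $\mu(\Xi - X_r^*) - C + \mathcal{A}^* y_r^* \in \partial \delta_{\mathcal{S}_+^n}(X_r^*)$, and since by convention this subdifferential is empty whenever $X_r^* \notin \mathcal{S}_+^n$, positive semidefiniteness is automatic.

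For the inequality, the pivotal remark is that the reference point $(X^*, y^*)$ is a fixed point of $\mathcal{P}$. Indeed, for $\mu > 0$ the inclusion $(0,0) \in T_{\mathcal{L}}(X^*, y^*)$ is equivalent to $(X^*, y^*) \in (I_{n+m} + \frac{1}{\mu}T_{\mathcal{L}})(X^*, y^*)$, that is $\mathcal{P}(X^*, y^*) = (X^*, y^*)$. Applying the firm non-expansiveness of $\mathcal{P}$ to the arguments $(\Xi,\lambda)$ and $(X^*, y^*)$ gives $\|\mathcal{P}(\Xi,\lambda) - \mathcal{P}(X^*, y^*)\| \le \|(\Xi,\lambda) - (X^*, y^*)\|$; substituting the two evaluations and transporting the abstract product norm through the vectorization isometry produces exactly \eqref{non-expansiveness property}. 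The existence of a reference $(X^*, y^*)$ with $(0,0) \in T_{\mathcal{L}}(X^*, y^*)$ is supplied by Assumption \ref{Assumption 1}, whose strict feasibility guarantees an optimal primal-dual triple attaining strong duality, from which such a fixed point is read off.

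The single point needing care is the boundary value $\mu = 0$, at which the resolvent form $\mathcal{P} = (I_{n+m} + \frac{1}{\mu}T_{\mathcal{L}})^{-1}$ degenerates. There I would pass to the limit $\mu \to 0^+$ and identify $\mathcal{P}$ with the metric projection onto the solution set $\mathrm{zer}(T_{\mathcal{L}})$, invoking the classical fact that $(I + cT)^{-1} \to P_{\overline{\mathrm{zer}(T)}}$ as $c \to \infty$ for maximal monotone $T$; this projection is itself firmly non-expansive and fixes $(X^*, y^*)$, and since $\mathrm{zer}(T_{\mathcal{L}}) \subseteq \mathcal{S}_+^n \times \mathbb{R}^m$, both the semidefiniteness and the bound persist. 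I expect this degenerate case, together with the routine bookkeeping of matching the abstract firm non-expansiveness to the vectorized Euclidean norm, to be the only mildly delicate steps; the remainder is a direct transcription of Rockafellar's resolvent theory.
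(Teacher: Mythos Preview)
Your argument is correct and is precisely the approach the paper takes: the paper's proof consists of a single sentence citing \cite[Proposition~1]{ROCK_SIAM_J_CONTROL_OPT}, and what you have written is exactly an unpacking of that citation (Minty's theorem for well-posedness, the subdifferential convention for $X_r^*\in\mathcal{S}_+^n$, and firm non-expansiveness applied against the fixed point $(X^*,y^*)$). Your handling of the degenerate endpoint $\mu=0$ via the limiting projection onto $\mathrm{zer}(T_{\mathcal{L}})$ is more than the paper spells out, but is the standard way to make sense of that case.
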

\begin{proof}
\par The thesis follows from the developments in \cite[Proposition 1]{ROCK_SIAM_J_CONTROL_OPT}. 
\end{proof}
\par In the following lemma, we bound the solution of every PMM sub-problem encountered by Algorithm \ref{Algorithm PMM-IPM}, while establishing bounds for the proximal estimates $\Xi_k$, and $\lambda_k$.
\begin{lemma} \label{Lemma-boundedness of optimal solutions for sub-problems}
Given Assumptions \textnormal{\ref{Assumption 1}, \ref{Assumption 2}}, there exists a triple $(X_{r_k}^*,y_{r_k}^*,Z_{r_k}^*)$, satisfying: 
\begin{equation} \label{PMM optimal solution}
\begin{split}
A \bm{X}_{r_k}^* + \mu (y_{r_k}^*-\lambda_k) -b & =   0,\\
-\bm{C} + A^\top  y_{r_k}^* + \bm{Z}_{r_k}^* - \mu (\bm{X}_{r_k}^* - \bm{\Xi}_k)& =  0,\\
\langle X_{r_k}^*, Z_{r_k}^*\rangle & = 0,
\end{split}
\end{equation}
\noindent with $X_{r_k}^*, Z_{r_k}^* \in \mathcal{S}^n_+$, and $\|(\bm{X}_{r_k}^*,y_{r_k}^*,\bm{Z}_{r_k}^*)\|_2 = O(\sqrt{n})$, for all $\lambda_k \in \mathbb{R}^m$, $\Xi_k \in \mathcal{S}_+^n$, produced by Algorithm \textnormal{\ref{Algorithm PMM-IPM}}, and any $\mu \in [0,\infty)$. Moreover, $\|(\bm{\Xi}_k,\lambda_k)\|_2 = O(\sqrt{n})$, for all $k \geq 0$.
\end{lemma}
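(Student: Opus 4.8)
The plan is to argue by induction on the iteration index $k$, treating the estimate bound $\|(\bm{\Xi}_k,\lambda_k)\|_2 = O(\sqrt{n})$ as the running hypothesis: it feeds the bound on the sub-problem solution, and the sub-problem bound is then used to propagate the hypothesis from $k$ to $k+1$. Existence and the structural properties require no new work. For fixed $\Xi_k\in\mathcal{S}_+^n$, $\lambda_k$ and $\mu\in[0,\infty)$, Lemma \ref{Lemma non-expansiveness} already produces a unique $(X_{r_k}^*,y_{r_k}^*)=\mathcal{P}(\Xi_k,\lambda_k)$ with $X_{r_k}^*\in\mathcal{S}_+^n$ satisfying \eqref{non-expansiveness property}. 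I would then \emph{define} $\bm{Z}_{r_k}^*\coloneqq \bm{C}-A^\top y_{r_k}^*+\mu(\bm{X}_{r_k}^*-\bm{\Xi}_k)$, i.e.\ impose the second row of \eqref{PMM optimal solution}; positive semi-definiteness of $Z_{r_k}^*$ and the complementarity $\langle X_{r_k}^*,Z_{r_k}^*\rangle=0$ then follow directly from the subgradient characterization of $\partial\delta_{\mathcal{S}_+^n}$, while the first row of \eqref{PMM optimal solution} is precisely the $u$-component of $T_{\mathcal{L}}(X_{r_k}^*,y_{r_k}^*)$. The primal-dual part of the norm bound is immediate: by \eqref{non-expansiveness property}, the triangle inequality, and Assumption \ref{Assumption 1} (which gives $\|(\bm{X}^*,y^*)\|_2=O(\sqrt{n})$),
\[
\|(\bm{X}_{r_k}^*,y_{r_k}^*)\|_2 \le \|(\bm{\Xi}_k,\lambda_k)\|_2 + 2\|(\bm{X}^*,y^*)\|_2 = O(\sqrt{n}),
\]
as soon as the inductive hypothesis $\|(\bm{\Xi}_k,\lambda_k)\|_2=O(\sqrt{n})$ is in force.

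The delicate point, and the reason the bound must be argued rather than read off, is that $Z_{r_k}^*$ carries the factor $\mu$ multiplying $(\bm{X}_{r_k}^*-\bm{\Xi}_k)$, so a naive estimate degrades as $\mu\to\infty$. Here I would exploit complementarity. Taking the Frobenius inner product of the defining identity for $Z_{r_k}^*$ with $X_{r_k}^*-\Xi_k$, and using $\langle X_{r_k}^*,Z_{r_k}^*\rangle=0$ together with $\langle Z_{r_k}^*,\Xi_k\rangle\ge 0$ (both matrices lie in $\mathcal{S}_+^n$), gives
\[
\mu\|X_{r_k}^*-\Xi_k\|_F^2 \le -\langle C-\mathcal{A}^*y_{r_k}^*,\,X_{r_k}^*-\Xi_k\rangle \le \|C-\mathcal{A}^*y_{r_k}^*\|_F\,\|X_{r_k}^*-\Xi_k\|_F,
\]
and hence $\mu\|X_{r_k}^*-\Xi_k\|_F\le\|C-\mathcal{A}^*y_{r_k}^*\|_F$. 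Assumption \ref{Assumption 2} bounds the data norm $\|C\|$ and $\eta_{\max}(A)$, and $y_{r_k}^*$ is already controlled, so $\|C-\mathcal{A}^*y_{r_k}^*\|_F=O(\sqrt{n})$; substituting back into the definition of $Z_{r_k}^*$ yields $\|Z_{r_k}^*\|_F\le 2\|C-\mathcal{A}^*y_{r_k}^*\|_F=O(\sqrt{n})$, a bound that is \emph{uniform} in $\mu$ (the case $\mu=0$ reduces to the original KKT system and is immediate). This closes the sub-problem estimate.

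It remains to run the induction for the proximal estimates. The base case is read off the starting point \eqref{starting point}: $\Xi_0=\rho I_n$ and $\lambda_0=y_0$ give $\|(\bm{\Xi}_0,\lambda_0)\|_2=O(\sqrt{n})$. For the inductive step I would split on the update rule of Algorithm \ref{Algorithm PMM-IPM}. If the estimates are not refreshed, then $(\Xi_{k+1},\lambda_{k+1})=(\Xi_k,\lambda_k)$ and the hypothesis is preserved verbatim. If they are refreshed, then $(\Xi_{k+1},\lambda_{k+1})=(X_{k+1},y_{k+1})$, and I must bound the current iterate; I expect \textbf{this to be the main obstacle}. My plan is to compare $(X_{k+1},y_{k+1},Z_{k+1})\in\mathscr{N}_{\mu_{k+1}}(\Xi_k,\lambda_k)$ with the sub-problem solution $(X_{r_k}^*,y_{r_k}^*,Z_{r_k}^*)$ built from the \emph{same} estimates $(\Xi_k,\lambda_k)$ and penalty $\mu_{k+1}$, which is $O(\sqrt{n})$ by the previous paragraphs. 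Writing $\bm{\Delta X}\coloneqq\bm{X}_{k+1}-\bm{X}_{r_k}^*$, $\Delta y\coloneqq y_{k+1}-y_{r_k}^*$, $\bm{\Delta Z}\coloneqq\bm{Z}_{k+1}-\bm{Z}_{r_k}^*$, subtracting the two regularized feasibility identities and pairing the primal residual with $\Delta y$ and the dual residual with $\bm{\Delta X}$ cancels the $A$/$A^\top$ cross-terms and gives
\[
\mu_{k+1}\big(\|\Delta y\|_2^2+\|\bm{\Delta X}\|_2^2\big)=\langle\bm{\Delta Z},\bm{\Delta X}\rangle+\tfrac{\mu_{k+1}}{\mu_0}\big(\Delta y^\top(\bar b+\tilde b_{k+1})-\langle\bm{\bar C}+\bm{\tilde C}_{k+1},\bm{\Delta X}\rangle\big).
\]
Monotonicity controls the first term, $\langle\bm{\Delta Z},\bm{\Delta X}\rangle\le\langle X_{k+1},Z_{k+1}\rangle=n\mu_{k+1}$ (the two cross products are nonnegative and $\langle X_{r_k}^*,Z_{r_k}^*\rangle=0$), while the linear terms are handled by Cauchy--Schwarz and Young's inequality, using $\|\bar b\|_2,\|\bm{\bar C}\|_2=O(\sqrt{n})$. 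Dividing by $\mu_{k+1}$ and absorbing the quadratic parts gives $\|\Delta y\|_2,\|\bm{\Delta X}\|_2=O(\sqrt{n})$, hence $\|(\bm{X}_{k+1},y_{k+1})\|_2=O(\sqrt{n})$, and the hypothesis propagates. The genuinely technical part is to keep every constant in this comparison independent of $k$ and to verify that the neighbourhood caps $\|(\tilde b_{k+1},\bm{\tilde C}_{k+1})\|_2\le K_N$ (in force because $(X_{k+1},y_{k+1},Z_{k+1})\in\mathscr{N}_{\mu_{k+1}}(\Xi_k,\lambda_k)$, by \eqref{Small neighbourhood}) suffice to absorb the perturbation terms.
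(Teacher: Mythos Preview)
Your argument is correct, and the overall inductive skeleton matches the paper's. There are, however, two places where you diverge from the paper, and the comparison is worth noting.

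For the inductive step in the ``estimates updated'' case, the paper takes a much shorter route than your monotonicity comparison. The update $(\Xi_{k+1},\lambda_{k+1})=(X_{k+1},y_{k+1})$ is triggered \emph{only} when the algorithmic test $\|(r_p,\bm{R}_d)\|_2\le K_N\mu_{k+1}/\mu_0$ holds, where $r_p,\bm{R}_d$ are the \emph{unregularized} residuals defined in Algorithm~\ref{Algorithm PMM-IPM}. Membership in $\mathscr{N}_{\mu_{k+1}}(\Xi_k,\lambda_k)$ gives the \emph{regularized} residual bound $\|(r_p+\mu_{k+1}(y_{k+1}-\lambda_k),\,\bm{R}_d+\mu_{k+1}(\bm{X}_{k+1}-\bm{\Xi}_k))\|_2\le K_N\mu_{k+1}/\mu_0$. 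One triangle inequality between these two bounds, followed by division by $\mu_{k+1}$, yields $\|(\bm{X}_{k+1}-\bm{\Xi}_k,\,y_{k+1}-\lambda_k)\|_2\le 2K_N/\mu_0$ directly, with no appeal to the sub-problem solution, complementarity, or Young's inequality. Your route works and in fact proves something a bit stronger (it uses only neighbourhood membership, not the trigger condition), but it is noticeably longer.

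Conversely, for the bound on $\bm{Z}_{r_k}^*$ the paper simply asserts $\|\bm{Z}_{r_k}^*\|_2=O(\sqrt{n})$ ``from Assumption~\ref{Assumption 2} combined with $\|(\bm{X}_{r_k}^*,y_{r_k}^*)\|_2=O(\sqrt{n})$'', without addressing the $\mu$-dependence in the term $\mu(\bm{X}_{r_k}^*-\bm{\Xi}_k)$. Your complementarity trick, pairing the dual identity with $X_{r_k}^*-\Xi_k$ to obtain $\mu\|X_{r_k}^*-\Xi_k\|_F\le\|C-\mathcal{A}^*y_{r_k}^*\|_F$, makes the uniformity in $\mu\in[0,\infty)$ explicit and is a genuine improvement over the paper's presentation at that step.
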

\begin{proof}
\par We prove the claim by induction on the iterates, $k \geq 0$, of Algorithm \ref{Algorithm PMM-IPM}. At iteration $k = 0$, we have that $\lambda_0 = y_0$ and $\Xi_0 = X_0$. But from the construction of the starting point in \eqref{starting point}, we know that $\|(X_0,y_0)\|_2 = O(\sqrt{n})$. Hence, $\|(\Xi_0,\lambda_0)\|_2 = O(\sqrt{n})$ (assuming $n > m$). Invoking Lemma \ref{Lemma non-expansiveness}, there exists a unique pair $(X_{r_0}^*,y_{r_0}^*)$ such that:
$$(X_{r_0}^*,y_{r_0}^*) = \mathcal{P}_0(\Xi_0,\lambda_0),\qquad \|(\bm{X}_{r_0}^*,y_{r_0}^*) - (\bm{X}^*,y^*)\|_{2} \leq \|(\bm{\Xi}_0,\lambda_0)-(\bm{X}^*,y^*)\|_{2},$$
\noindent where $(X^*,y^*,Z^*)$ solves \eqref{non-regularized primal}--\eqref{non-regularized dual}, and from Assumption \ref{Assumption 1}, is such that $\|\bm{X}^*,y^*,\bm{Z}^*\|_2 = O(\sqrt{n})$. Using the triangular inequality, and combining the latter inequality with our previous observations, yields that $\|(\bm{X}_{r_0}^*,y_{r_0}^*)\|_2 = O(\sqrt{n})$. From the definition of the operator in \eqref{PMM Operator Subproblem}, we know that: 
\begin{equation*}
 -C + \mathcal{A}^* y_{r_0}^* - \mu (X_{r_0}^* - \Xi_0) \ \in \partial \delta_{\mathcal{S}_+^n}(X_{r_0}^*),\qquad
 \mathcal{A}X_{r_0}^* + \mu (y_{r_0}^*-\lambda_0) - b \ = 0,
\end{equation*}
\noindent where $\partial(\delta_{\mathcal{S}_+^n}(\cdot))$ is the sub-differential of the indicator function defined in \eqref{Indicator function}. Hence, there must exist $-Z_{r_0}^* \in \partial \delta_{\mathcal{S}_+^n}(X_{r_0}^*)$ (and thus, $Z_{r_0}^* \in \mathcal{S}^n_{+}$, $\langle X_{r_0},Z_{r_0} \rangle = 0$), such that:
\[Z_{r_0}^* = C  - \mathcal{A}^* y_{r_0}^* + \mu  (X_{r_0}^* - \Xi_0),\quad \langle X^*_{r_0},Z^*_{r_0}\rangle = 0,\quad \|\bm{Z}_{r_0}^*\|_2 = O(\sqrt{n}),\]
\noindent where $\|\bm{Z}_{r_0}^*\|_2 = O(\sqrt{n})$ follows from Assumption \ref{Assumption 2}, combined with $\|(\bm{X}^*_{r_0},y^*_{r_0})\|_2 = O(\sqrt{n})$.

\par Let us now assume that at some iteration $k$ of Algorithm \ref{Algorithm PMM-IPM}, we have $\|(\bm{\Xi}_k,\lambda_k)\|_2 = O(\sqrt{n})$. There are two cases for the subsequent iterations:
\begin{itemize}
\item[\textbf{1.}] The proximal estimates are updated, that is $(\Xi_{k+1},\lambda_{k+1}) = (X_{k+1},y_{k+1})$, or
\item[\textbf{2.}] the proximal estimates stay the same, i.e. $(\Xi_{k+1},\lambda_{k+1}) = (\Xi_k,\lambda_k)$.
\end{itemize}
\par \textbf{Case 1.} We know by construction that this occurs only if the following is satisfied:
$$\|(r_p,\bm{R}_d)\|_2 \leq K_N \frac{\mu_{k+1}}{\mu_0},$$
\noindent where $r_p,\ R_d$ are defined in Algorithm \ref{Algorithm PMM-IPM}. However, from the neighbourhood conditions in \eqref{Small neighbourhood}, we know that:
$$\|\big(r_p + \mu_{k+1}(y_{k+1}-\lambda_k), \bm{R}_d + \mu_{k+1}(\bm{X}_{k+1}-\bm{\Xi}_k)\big)\|_2 \leq K_N \frac{\mu_{k+1}}{\mu_0}.$$
\noindent Combining the last two inequalities by applying the triangular inequality, and using the inductive hypothesis ($\|(\bm{\Xi}_k,\lambda_k)\|_2 = O(\sqrt{n})$), yields that 
\[ \|(\bm{X}_{k+1},y_{k+1})\|_2 \leq \frac{2K_N}{\mu_0} + \|(\bm{\Xi}_k,\lambda_k)\|_2 = O(\sqrt{n}).\]
\noindent Hence, $\|(\bm{\Xi}_{k+1},\lambda_{k+1})\|_2 = O(\sqrt{n})$. Then, we can invoke Lemma \ref{Lemma non-expansiveness}, with $\lambda = \lambda_{k+1}$, $\Xi = \Xi_{k+1}$ and any $\mu \geq 0$, which gives 
$$\|(\bm{X}_{r_{k+1}}^*,y_{r_{k+1}}^*) - (\bm{X}^*,y^*)\|_{2} \leq \|(\bm{\Xi}_{k+1},\lambda_{k+1})-(\bm{X}^*,y^*)\|_{2}.$$
\noindent A simple manipulation shows that $\|(\bm{X}_{r_{k+1}}^*,y_{r_{k+1}}^*)\|_2 = O(\sqrt{n})$. As before, we use \eqref{PMM Operator Subproblem} alongside Assumption \ref{Assumption 2} to show the existence of $-Z_{r_{k+1}}^* \in \partial \delta_{\mathcal{S}^n_{+}}(X_{r_{k+1}}^*)$, such that the triple $(X_{r_{k+1}}^*,y_{r_{k+1}}^*,Z_{r_{k+1}}^*)$ satisfies \eqref{PMM optimal solution} with $\|\bm{Z}_{r_{k+1}}^*\|_2 = O(\sqrt{n})$.

\par \textbf{Case 2.} In this case, we have $(\Xi_{k+1},\lambda_{k+1}) = (\Xi_k,\lambda_k)$, and hence the inductive hypothesis gives us directly that $\|(\bm{\Xi}_{k+1},\lambda_{k+1})\|_2 = O(\sqrt{n})$. As before, there exists a triple $(X_{r_{k+1}}^*,y_{r_{k+1}}^*,Z_{r_{k+1}}^*)$ satisfying \eqref{PMM optimal solution}, with $\|(\bm{X}_{r_{k+1}}^*,y_{r_{k+1}}^*,\bm{Z}_{r_{k+1}}^*)\|_2 = O(\sqrt{n})$. 
\end{proof}
\par In the next lemma we define and bound a triple solving a particular parametrized non-linear system of equations, which is then used in Lemma \ref{Lemma boundedness of x z} in order to prove boundedness of the iterates of Algorithm \ref{Algorithm PMM-IPM}.
\begin{lemma} \label{Lemma tilde point}
Given Assumptions \textnormal{\ref{Assumption 1}, \ref{Assumption 2}}, and $(\Xi_k,\lambda_k)$, produced at an arbitrary iteration $k \geq 0$ of Algorithm \textnormal{\ref{Algorithm PMM-IPM}}, and any $\mu \in [0,\infty)$, there exists a triple $(\tilde{X},\tilde{y},\tilde{Z})$ which satisfies the following system of equations:
\begin{equation}  \label{tilde point conditions}
\begin{split}
A \bm{\tilde{X}} + \mu \tilde{y} & =   b + \bar{b} + \mu \lambda_k + \tilde{b}_k,\\
 A^\top  \tilde{y} + \bm{\tilde{Z}} - \mu  \bm{\tilde{X}} & =  \bm{C} + \bm{\bar{C}} - \mu \bm{\Xi}_k + \bm{\tilde{C}}_k,\\
\tilde{X}\tilde{Z} & = \theta I_n,
\end{split}
\end{equation}
\noindent for some arbitrary $\theta > 0$ ($\theta = \Theta(1)$), with $\tilde{X},\ \tilde{Z} \in \mathcal{S}^n_{++}$ and $\|(\bm{\tilde{X}},\tilde{y},\bm{\tilde{Z}})\|_2 = O(\sqrt{n})$, where $\tilde{b}_{k},\ \tilde{C}_{k}$ are defined in \eqref{Small neighbourhood}, while $\bar{b},\ \bar{C}$ are defined with the starting point in \eqref{starting point}. Furthermore, $\nu_{\min}(\tilde{X}) \geq \xi$ and $\nu_{\min}(\tilde{Z}) \geq \xi$, for some positive $\xi = \Theta(1)$.
\end{lemma}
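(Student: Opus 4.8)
My plan is to recognise \eqref{tilde point conditions} as the first-order optimality conditions of a strongly convex, barrier-regularised problem, deduce existence and uniqueness by convex analysis, and then read off the quantitative bounds by comparison with an explicitly constructed, well-conditioned strictly feasible point. For $\mu>0$, eliminating $\tilde{y}$ from the first block via $\tilde{y}=\lambda_k-\tfrac{1}{\mu}(\mathcal{A}\tilde{X}-\hat{b})$ and setting $\tilde{Z}=\theta\tilde{X}^{-1}$ from the third block, the second block reduces to $\nabla f_\mu(\tilde{X})=0$, where
\begin{equation*}
f_\mu(X) \coloneqq \langle \hat{C}-\mathcal{A}^*\lambda_k,\, X\rangle + \tfrac{1}{2\mu}\|\mathcal{A}X-\hat{b}\|_2^2 + \tfrac{\mu}{2}\|X-\Xi_k\|_F^2 - \theta\ln(\det X),
\end{equation*}
with $\hat{b}\coloneqq b+\bar{b}+\tilde{b}_k$ and $\hat{C}\coloneqq C+\bar{C}+\tilde{C}_k$. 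Since $f_\mu$ is strongly convex and coercive on $\mathcal{S}^n_{++}$, with the barrier forcing the minimiser into $\mathcal{S}^n_{++}$, it has a unique minimiser $\tilde{X}\in\mathcal{S}^n_{++}$; recovering $\tilde{y}$ and $\tilde{Z}=\theta\tilde{X}^{-1}\in\mathcal{S}^n_{++}$ yields a solution of \eqref{tilde point conditions}. For $\mu=0$ the problem degenerates to the barrier pair $\min\{\langle\hat{C},X\rangle-\theta\ln\det X:\mathcal{A}X=\hat{b}\}$ and its dual, and existence of a point with $\tilde{X}\tilde{Z}=\theta I_n$ follows from standard central-path theory, provided both are strictly feasible.

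The enabling step is the construction of strictly feasible, well-conditioned anchors. Using the definition \eqref{semi-norm definition} together with the bound $\|(\tilde{b}_k,\bm{\tilde{C}}_k)\|_{\mathcal{S}}\le\gamma_{\mathcal{S}}\rho$ from \eqref{Small neighbourhood}, I would pick symmetric $\delta X,\delta Z$ with $\|\delta X\|_F,\|\delta Z\|_F\le\gamma_{\mathcal{S}}\rho$ realising the semi-norm, set $\bar{X}\coloneqq\rho I_n+\delta X$, $\bar{Z}\coloneqq\rho I_n+\delta Z$, and choose $\bar{y}$ accordingly. Since $\gamma_{\mathcal{S}}<1$, this gives $\bar{X},\bar{Z}\succeq\rho(1-\gamma_{\mathcal{S}})I_n\succ 0$, while the starting-point identities \eqref{starting point}, namely $\mathcal{A}(\rho I_n)=b+\bar{b}$ and $\mathcal{A}^*y_0+\rho I_n=C+\bar{C}$, ensure $\mathcal{A}\bar{X}=\hat{b}$ and $\mathcal{A}^*\bar{y}+\bar{Z}=\hat{C}$. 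These anchors have norm $O(\sqrt{n})$ and, crucially, eigenvalues bounded away from $0$ by a constant independent of $n$ and $m$; they simultaneously certify the strict feasibility needed above.

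For the norm bounds I would combine the bounded PMM solution $(X^*_{r_k},y^*_{r_k},Z^*_{r_k})$ of Lemma \ref{Lemma-boundedness of optimal solutions for sub-problems}, which already satisfies the $\mu$-coupled system \eqref{PMM optimal solution} with complementarity and is $O(\sqrt{n})$, with the anchor absorbing the residual infeasibility. Subtracting \eqref{PMM optimal solution} from \eqref{tilde point conditions} leaves a correction system whose right-hand side is exactly $(\bar{b}+\tilde{b}_k,\bm{\bar{C}}+\bm{\tilde{C}}_k)$, controlled in both the $2$-norm (by $O(1)$ and by the $O(\sqrt n)$ starting-point data) and the semi-norm. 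The inequality $f_\mu(\tilde{X})\le f_\mu(\bar{X})$, in which the penalty term vanishes at $\bar{X}$, then yields $\|\tilde{X}-\Xi_k\|_F=O(\sqrt{n})$ by strong convexity when $\mu$ is bounded below, and near-feasibility $\|\mathcal{A}\tilde{X}-\hat{b}\|_2=O(\sqrt{\mu n})$ when $\mu$ is small; in either regime $\|\bm{\tilde{X}}\|_2=O(\sqrt{n})$, and the bounds on $\tilde{y},\tilde{Z}$ then follow from \eqref{tilde point conditions} and Assumption \ref{Assumption 2}. The eigenvalue bounds I would extract from the third block: since $\tilde{Z}=\theta\tilde{X}^{-1}$, testing the relation $\tilde{Z}=\hat{C}-\mathcal{A}^*\tilde{y}+\mu(\tilde{X}-\Xi_k)$ against eigenvectors of $\tilde{X}$ controls $\nu_{\max}(\tilde{X})$ and $\nu_{\max}(\tilde{Z})$ by constants independent of $n,m$, whence $\nu_{\min}(\tilde{X})=\theta/\nu_{\max}(\tilde{Z})\ge\xi$ and $\nu_{\min}(\tilde{Z})=\theta/\nu_{\max}(\tilde{X})\ge\xi$ for some $\xi=\Theta(1)$.

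I expect the main obstacle to be precisely this last step, that is, upgrading the Frobenius control $\|\bm{\tilde{X}}\|_2,\|\bm{\tilde{Z}}\|_2=O(\sqrt{n})$ (which by itself only gives $\nu_{\max}\le O(\sqrt{n})$, hence the far weaker $\nu_{\min}\ge\Omega(1/\sqrt{n})$) to genuine \emph{spectral} bounds $\nu_{\max}=\Theta(1)$ that survive uniformly as $\mu$ ranges over its interval. The difficulty is that the minimiser transitions from being pulled towards $\Xi_k$ for large $\mu$ to being pulled onto the primal-feasible set for small $\mu$, so that ruling out a vanishing eigenvalue of $\tilde{X}$ or $\tilde{Z}$ requires playing the log-det barrier in the first-order condition against the uniform positive definiteness of the anchor (guaranteed by $\gamma_{\mathcal{S}}<1$), with all constants kept independent of $n$ and $m$. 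Here I would rely on $\mu_0=\rho^2=\Theta(1)$ to keep $\mu=\Theta(1)$ in the relevant range, which is what pins $\xi$ away from zero.
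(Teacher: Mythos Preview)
Your overall architecture matches the paper: you correctly identify \eqref{tilde point conditions} as the first-order conditions of a proximal-barrier problem, and your anchor $(\bar X,\bar Z)=\rho I_n+(\delta X,\delta Z)$ with eigenvalues in $[(1-\gamma_{\mathcal S})\rho,(1+\gamma_{\mathcal S})\rho]$ is exactly the construction the paper uses. The existence argument is essentially the same.

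The substantive divergence is in how you obtain the $O(\sqrt n)$ norm bound uniformly in $\mu\in[0,\infty)$, and here there is a genuine gap. Your route is the variational inequality $f_\mu(\tilde X)\le f_\mu(\bar X)$ together with strong convexity. But the strong-convexity modulus of $f_\mu$ is $\mu$, so for small $\mu$ you fall back on ``near-feasibility $\|\mathcal A\tilde X-\hat b\|_2=O(\sqrt{\mu n})$''. Control of $\mathcal A\tilde X$ does \emph{not} control $\tilde X$: the null space of $\mathcal A$ is unconstrained by that estimate, so the conclusion $\|\bm{\tilde X}\|_2=O(\sqrt n)$ does not follow. Your final sentence tries to escape by restricting to $\mu=\Theta(1)$, but the lemma is stated (and is later invoked, e.g.\ with $\mu=\sigma_k\mu_k\to 0$) for arbitrary $\mu\ge 0$, so this restriction is not available.

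The paper sidesteps the whole uniformity issue by a different decomposition. It first considers the \emph{unregularised} barrier pair with data $(\hat b,\hat C)$ and, using your anchor as a strictly feasible point, obtains a $\mu$-\emph{independent} solution $(X_s^*,y_s^*,Z_s^*)$ with $\|(\bm X_s^*,y_s^*,\bm Z_s^*)\|_2=O(\sqrt n)$. It then views $(\tilde X,\tilde y)$ as the image of $(\Xi_k,\lambda_k)$ under the proximal map associated with this barrier problem and applies non-expansiveness:
\[
\|(\bm{\tilde X},\tilde y)-(\bm X_s^*,y_s^*)\|_2\ \le\ \|(\bm\Xi_k,\lambda_k)-(\bm X_s^*,y_s^*)\|_2.
\]
Since both $(X_s^*,y_s^*)$ and $(\Xi_k,\lambda_k)$ are $O(\sqrt n)$ (the latter by Lemma~\ref{Lemma-boundedness of optimal solutions for sub-problems}), this immediately gives $\|(\bm{\tilde X},\tilde y)\|_2=O(\sqrt n)$ with a constant independent of $\mu$; the bound on $\tilde Z$ then follows from the second block of \eqref{tilde point conditions}. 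Note that the relevant fixed point here is the barrier solution of the \emph{perturbed} problem, not the PMM solution $(X_{r_k}^*,y_{r_k}^*,Z_{r_k}^*)$ of the original problem that you propose to use.

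For the eigenvalue bound, the paper argues directly that $\tilde{\mathcal L}_{\mu,\theta}(X;\Xi_k,\lambda_k)$ blows up as $\nu_{\min}(X)\to 0$ or $\nu_{\max}(X)\to\infty$, so the minimiser has $\nu_{\min}(\tilde X),\nu_{\max}(\tilde X)$ pinned by constants, and $\tilde Z=\theta\tilde X^{-1}$ inherits the same. Your worry about upgrading Frobenius control to spectral control is legitimate, but once you have the $\mu$-uniform $O(\sqrt n)$ bound via non-expansiveness, the barrier term does the work without needing the case split you describe.
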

\begin{proof}
\par Let $k \geq 0$ denote an arbitrary iteration of Algorithm \ref{Algorithm PMM-IPM}. Let also $\bar{b},\ \bar{C}$ as defined in \eqref{starting point}, and $\tilde{b}_{k},\ \tilde{C}_{k}$, as defined in the neighbourhood conditions in \eqref{Small neighbourhood}. Given an arbitrary positive constant $\theta > 0$, we consider the following barrier primal-dual pair:
\begin{equation} \label{tilde non-regularized primal} 
\underset{X \in \mathcal{S}^n}{\text{min}} \ \big( \langle C+\bar{C} + \tilde{C}_k),X\rangle  -\theta \ln (\det (X)) \big), \ \ \text{s.t.}  \  \mathcal{A} X= b + \bar{b} + \tilde{b}_k,   
\end{equation}
\begin{equation} \label{tilde non-regularized dual} 
\underset{y \in \mathbb{R}^m,Z \in \mathcal{S}^n}{\text{max}}  \ \big((b + \bar{b} + \tilde{b}_k)^\top y +\theta \ln(\det(Z))\big), \ \ \text{s.t.}\   \mathcal{A}^*y + Z = C+\bar{C} + \tilde{C}_k.
\end{equation}
\par Let us now define the following triple:
\begin{equation*}
(\hat{X},\hat{y},\hat{Z}) \coloneqq \arg \min_{(X,y,Z)} \big\{\|(\bm{X},\bm{Z})\|_2: A\bm{X} = \tilde{b}_k,\ A^\top  y + \bm{Z} = \tilde{C}_k \}. 
\end{equation*}
\noindent From the neighbourhood conditions \eqref{Small neighbourhood}, we know that $\|(\tilde{b}_k,\bm{\tilde{C}}_k)\|_{\mathcal{S}} \leq \gamma_{\mathcal{S}}\rho$, and from the definition of the semi-norm in \eqref{semi-norm definition}, we have that $\|(\bm{\hat{X}},\bm{\hat{Z}})\|_2 \leq \gamma_{\mathcal{S}} \rho$. Using \eqref{semi-norm definition} alongside Assumption \ref{Assumption 2}, we can also show that $\|\hat{y}\|_2 = \Theta(\|(\bm{\hat{X}},\bm{\hat{Z}})\|_2)$. On the other hand, from the definition of the starting point, we have that $(X_0,Z_0) = \rho(I_n,I_n)$. By defining the following auxiliary point:
$$(\bar{X},\bar{y},\bar{Z}) = (X_0,y_0,Z_0) + (\hat{X},\hat{y},\hat{Z}),$$
\noindent we have that $(1 + \gamma_{\mathcal{S}})\rho(I_n,I_n) \succeq (\bar{X},\bar{Z}) \succeq (1-\gamma_{\mathcal{S}})\rho(I_n,I_n)$, that is, the eigenvalues of these matrices are bounded by constants that are independent of the problem under consideration. By construction, the triple $(\bar{X},\bar{y},\bar{Z})$ is a feasible solution for the primal-dual pair in \eqref{tilde non-regularized primal}--\eqref{tilde non-regularized dual}, giving bounded primal and dual objective values, respectively. This, alongside Weierstrass's theorem on a potential function, can be used to show that the solution of problem \eqref{tilde non-regularized primal}--\eqref{tilde non-regularized dual} is bounded. In other words, for any choice of $\theta > 0$, there must exist a bounded  triple $(X_s^*,y_s^*,Z_s^*)$ solving \eqref{tilde non-regularized primal}--\eqref{tilde non-regularized dual}, i.e.:
\begin{equation*}
\begin{split}
A\bm{X}_s^* = b + \bar{b} + \tilde{b}_k,\quad A^\top  y_s^* + \bm{Z}_s^*  = \bm{C} + \bm{\bar{C}} + \bm{\tilde{C}}_k,\quad
X_s^* Z_s^* = \theta I_n,
\end{split}
\end{equation*}
\noindent such that $\nu_{\max}(X_{s^*}) \leq K_{s^*}$ and $\nu_{\max}(Z_{s^*}) \leq K_{s^*}$, where $K_{s^*} > 0$ is a positive constant. In turn, combining this with Assumption \ref{Assumption 2}  implies that $\|(\bm{X}_s^*,y_s^*,\bm{Z}_s^*)\|_2  = O(\sqrt{n})$.

%Slater's constraint qualification holds for both \eqref{tilde non-regularized primal} and \eqref{tilde non-regularized dual}. By invoking Nonlinear Farka's Lemma (see \cite[Prop. 6.4.1]{Rockafellar_BOOK_PRINCETON}) in both \eqref{tilde non-regularized primal} and \eqref{tilde non-regularized dual},
\par Let us now apply the PMM to \eqref{tilde non-regularized primal}--\eqref{tilde non-regularized dual}, given the estimates $\Xi_k,\ \lambda_k$. We should note at this point that the proximal operator used here is different from that in \eqref{Primal Dual Proximal Operator}, since it is based on a different maximal monotone operator to that in \eqref{Primal Dual Maximal Monotone Operator}. In particular, we associate a single-valued maximal monotone operator to \eqref{tilde non-regularized primal}--\eqref{tilde non-regularized dual}, with graph:
\begin{equation*}
\tilde{T}_{\mathcal{L}}(X,y) \coloneqq \big\{(V,u): V =  (C + \bar{C} + \tilde{C}_k) - \mathcal{A}^*y - \theta X^{-1}, u = \mathcal{A}X-(b+\bar{b}+\tilde{b}_k) \big\}.
\end{equation*}
\noindent As before, the proximal operator is defined as $\tilde{\mathcal{P}} \coloneqq (I_{n+m}+ \tilde{T}_{\mathcal{L}})^{-1}$, and is single-valued and non-expansive.  We let any $\mu \in [0,\infty)$ and define the following penalty function:
\begin{equation*} 
\begin{split}
\tilde{\mathcal{L}}_{\mu,\theta}(X;\Xi_k,\lambda_k)  \coloneqq \ & \langle C + \bar{C} + \tilde{C}_k, X\rangle  + 
 \frac{1}{2}\mu \|X-\Xi_k\|_{F}^2 + \frac{1}{2\mu}\|\mathcal{A}X-(b+\bar{b}+\tilde{b}_k)\|_{2}^2 \\ &  - (\lambda_k)^\top (\mathcal{A}X - (b+\bar{b}+\tilde{b}_k))-\theta \ln(\det(X)).
\end{split}
\end{equation*}
\noindent By defining the variables $y = \lambda_k - \frac{1}{\mu}(\mathcal{A}X - (b+\bar{b}+\tilde{b}_k))$ and $Z = \theta X^{-1}$, we can see that the optimality conditions of this PMM sub-problem are exactly those stated in \eqref{tilde point conditions}. Equivalently, we can find a pair $(\tilde{X},\tilde{y})$ such that $(\tilde{X},\tilde{y}) = \tilde{\mathcal{P}}(\Xi_k,\lambda_k)$ and set $\tilde{Z} = \theta \tilde{X}^{-1}$. We can now use the non-expansiveness of $\tilde{\mathcal{P}}$, as in Lemma \ref{Lemma non-expansiveness}, to obtain:
$$\|(\bm{\tilde{X}},\tilde{y})-(\bm{X}_s^*,y_s^*)\|_{2} \leq \|(\Xi_k,\lambda_k)-(\bm{X}_s^*,y_s^*)\|_{2}.$$
\noindent But we know, from Lemma \ref{Lemma-boundedness of optimal solutions for sub-problems}, that $\|(\bm{\Xi}_k,\lambda_k)\|_2 = O(\sqrt{n})$, $\forall\ k \geq 0$. Combining this with our previous observations, yields that $\|(\bm{\tilde{X}},\tilde{y})\|_2 = O(\sqrt{n})$. Setting $\tilde{Z} = \theta\tilde{X}^{-1}$, gives a triple $(\tilde{X},\tilde{y},\tilde{Z})$ that satisfies \eqref{tilde point conditions}, while $\|(\bm{\tilde{X}},\tilde{y},\bm{\tilde{Z}})\|_2 = O(\sqrt{n})$ (from dual feasibility).  
\par To conclude the proof, let us notice that the value of $\tilde{\mathcal{L}}_{\mu,\theta}(X;\Xi_k,\lambda_k)$ will grow unbounded as $\nu_{\min}(X) \rightarrow 0$ or $\nu_{\max}(X) \rightarrow \infty$. Hence, there must exist a constant $\tilde{K} > 0$, such that the minimizer of this function satisfies $\frac{1}{\tilde{K}} \leq \nu_{\min}(\tilde{X}) \leq \nu_{\max}(\tilde{X}) \leq \tilde{K}$. The relation $\tilde{X}\tilde{Z} = \theta I_n$ then implies that $\frac{\theta}{\tilde{K}} \leq \nu_{\min}(\tilde{Z}) \leq \nu_{\max}(\tilde{Z}) \leq \theta \tilde{K}$. Hence, there exists some $\xi = \Theta(1)$ such that $\nu_{\min}(\tilde{X}) \geq \xi$ and $\nu_{\min}(\tilde{Z}) \geq \xi$. 
\end{proof}
\noindent In the following lemma, we derive boundedness of the iterates of Algorithm \ref{Algorithm PMM-IPM}.

\begin{lemma} \label{Lemma boundedness of x z}
Given Assumptions \textnormal{\ref{Assumption 1}} and \textnormal{\ref{Assumption 2}}, the iterates $(X_k,y_k,Z_k)$ produced by Algorithm \textnormal{\ref{Algorithm PMM-IPM}}, for all $k \geq 0$, are such that:
$$\textnormal{Tr}(X_k) = O(n),\qquad \textnormal{Tr}(Z_k) = O(n),\qquad \|(\bm{X}_k,y_k,\bm{Z}_k)\|_2 = O(n).$$
\end{lemma}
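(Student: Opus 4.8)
The plan is to bound $(X_k,y_k,Z_k)$ by comparing it against a carefully chosen, provably bounded point lying in the \emph{same} regularized center equations, and then exploit a standard orthogonality argument. First I would invoke Lemma \ref{Lemma-boundedness of optimal solutions for sub-problems} with $\mu = \mu_k$ to obtain the bounded PMM sub-problem solution $(X_{r_k}^*,y_{r_k}^*,Z_{r_k}^*)$ satisfying \eqref{PMM optimal solution}, and Lemma \ref{Lemma tilde point} with $\mu = \mu_k$ to obtain the bounded, uniformly positive-definite triple $(\tilde{X},\tilde{y},\tilde{Z})$ satisfying \eqref{tilde point conditions}. Since $\mu_k$ is non-increasing and $\mu_0 = \langle X_0,Z_0\rangle/n = \Theta(1)$, the weight $\tfrac{\mu_k}{\mu_0}$ lies in $(0,1]$, so I can form the convex combination
\[
(X_k^c,y_k^c,Z_k^c) \coloneqq \Big(1-\tfrac{\mu_k}{\mu_0}\Big)(X_{r_k}^*,y_{r_k}^*,Z_{r_k}^*) + \tfrac{\mu_k}{\mu_0}(\tilde{X},\tilde{y},\tilde{Z}).
\]
A direct check shows that this combination satisfies exactly the center equations defining $\tilde{\mathscr{C}}_{\mu_k}(\Xi_k,\lambda_k)$: the weights on $b$ and $\bm{C}$ combine to $1$, while those on the perturbations $\bar b,\bar C,\tilde b_k,\tilde C_k$ combine to $\tfrac{\mu_k}{\mu_0}$, matching the scaling built into the neighbourhood \eqref{Small neighbourhood}. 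Moreover $\|(\bm{X}_k^c,y_k^c,\bm{Z}_k^c)\|_2 = O(\sqrt{n})$, and $X_k^c \succeq \tfrac{\mu_k}{\mu_0}\xi I_n$, $Z_k^c \succeq \tfrac{\mu_k}{\mu_0}\xi I_n$ by positive semi-definiteness of the starred triple and the eigenvalue bounds on $(\tilde X,\tilde Z)$.

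Next I would subtract the two sets of center equations. Writing $\Delta X = X_k - X_k^c$, $\Delta y = y_k - y_k^c$, $\Delta Z = Z_k - Z_k^c$, the right-hand sides cancel exactly, leaving $A\bm{\Delta X} + \mu_k \Delta y = 0$ and $A^\top \Delta y + \bm{\Delta Z} - \mu_k \bm{\Delta X} = 0$. Substituting $\bm{\Delta Z} = \mu_k\bm{\Delta X} - A^\top\Delta y$ into $\langle \Delta X,\Delta Z\rangle = \bm{\Delta X}^\top\bm{\Delta Z}$ and using $A\bm{\Delta X} = -\mu_k\Delta y$ yields $\langle \Delta X,\Delta Z\rangle = \mu_k(\|\bm{\Delta X}\|_2^2 + \|\Delta y\|_2^2) \geq 0$. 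Expanding this gives
\[
\langle X_k,Z_k^c\rangle + \langle X_k^c,Z_k\rangle \;\leq\; \langle X_k,Z_k\rangle + \langle X_k^c,Z_k^c\rangle,
\]
where every cross term on the left is nonnegative since all four matrices are positive (semi-)definite.

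To finish, I would bound both sides. On the right, $\langle X_k,Z_k\rangle = n\mu_k$ by definition of $\mu_k$; for $\langle X_k^c,Z_k^c\rangle$ I expand the convex combination and use the two exact identities $\langle X_{r_k}^*,Z_{r_k}^*\rangle = 0$ and $\langle\tilde X,\tilde Z\rangle = \textnormal{Tr}(\theta I_n) = \theta n$, together with $\|(\bm{X}_{r_k}^*,\bm{Z}_{r_k}^*)\|_2,\|(\bm{\tilde X},\bm{\tilde Z})\|_2 = O(\sqrt{n})$ to control the cross terms, obtaining $\langle X_k^c,Z_k^c\rangle = \tfrac{\mu_k}{\mu_0}(1-\tfrac{\mu_k}{\mu_0})O(n) + (\tfrac{\mu_k}{\mu_0})^2\theta n = O(\mu_k n)$. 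On the left, since $X_k,Z_k \succeq 0$ and $\textnormal{Tr}(AB)\geq \nu_{\min}(B)\textnormal{Tr}(A)$ for $A\succeq 0$, I get $\langle X_k,Z_k^c\rangle + \langle X_k^c,Z_k\rangle \geq \tfrac{\mu_k}{\mu_0}\xi\big(\textnormal{Tr}(X_k)+\textnormal{Tr}(Z_k)\big)$. Combining these and dividing by $\tfrac{\mu_k}{\mu_0}\xi$ gives $\textnormal{Tr}(X_k)+\textnormal{Tr}(Z_k) = O(n)$. The norm bound then follows: for PSD matrices $\|\bm{X}_k\|_2 = \|X_k\|_F \leq \textnormal{Tr}(X_k) = O(n)$ and similarly for $Z_k$; and isolating $A^\top y_k$ in the dual center equation, using $\eta_{\min}(A)\geq K_{A,1}$ together with $\|\bm{C}\|_2 = O(n)$, $\|\bm{Z}_k\|_2 = O(n)$, $\mu_k\|\bm{X}_k\|_2 = O(n)$ and the $O(\sqrt{n})$ bound on $(\bm{\Xi}_k,\lambda_k)$, yields $\|y_k\|_2 = O(n)$.

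The main obstacle is the scaling in the final division: dividing by $\tfrac{\mu_k}{\mu_0}$ would blow up as $\mu_k\to 0$ unless the right-hand side is genuinely $O(\mu_k n)$ rather than merely $O(n)$. This is exactly why the comparison point must be built from the \emph{complementary} sub-problem solution (so that the $(1-\tfrac{\mu_k}{\mu_0})^2$ leading term of $\langle X_k^c,Z_k^c\rangle$ vanishes) and why $\langle\tilde X,\tilde Z\rangle = \theta n$ contributes only at order $(\tfrac{\mu_k}{\mu_0})^2$; verifying that the cross terms are controlled and that the whole quantity collapses to $O(\mu_k n)$ is the crux of the estimate.
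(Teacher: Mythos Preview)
Your proposal is correct and follows essentially the same approach as the paper: the comparison point $(X_k^c,y_k^c,Z_k^c)$ is exactly the paper's auxiliary triple \eqref{auxiliary triple 1} (with the sign of $X_k$ absorbed into your $\Delta X$), the orthogonality step $\langle\Delta X,\Delta Z\rangle\geq 0$ is the paper's relation \eqref{Lemma boundedness of x,z, relation 1}, and the subsequent lower/upper bounding reproduces \eqref{Lemma boundedness of x,z, relation 2}. Your explicit identification of why the right-hand side must scale as $O(\mu_k n)$---namely the vanishing of the $(1-\tfrac{\mu_k}{\mu_0})^2\langle X_{r_k}^*,Z_{r_k}^*\rangle$ term---is precisely the mechanism the paper exploits.
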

\begin{proof}
\par Let an iterate $(X_k,y_k,Z_k) \in \mathscr{N}_{\mu_k}(\Xi_k,\lambda_k)$, produced by Algorithm \ref{Algorithm PMM-IPM} during an arbitrary iteration $k \geq 0$, be given. Firstly, we invoke Lemma \ref{Lemma tilde point}, from which we have a triple $(\tilde{X},\tilde{y},\tilde{Z})$ satisfying \eqref{tilde point conditions}, for $\mu = \mu_k$. Similarly, by invoking Lemma \ref{Lemma-boundedness of optimal solutions for sub-problems}, we know that there exists a triple $(X_{r_k}^*,y_{r_k}^*,Z_{r_k}^*)$ satisfying \eqref{PMM optimal solution}, with $\mu = \mu_k$. Consider the following auxiliary point:
\begin{equation} \label{auxiliary triple 1}
\bigg((1-\frac{\mu_k}{\mu_0})X_{r_k}^* + \frac{\mu_k}{\mu_0} \tilde{X} - X_k,\ (1-\frac{\mu_k}{\mu_0})y_{r_k}^* +\frac{\mu_k}{\mu_0} \tilde{y} - y_k,\ (1-\frac{\mu_k}{\mu_0})Z_{r_k}^* + \frac{\mu_k}{\mu_0} \tilde{Z} - Z_k\bigg).
\end{equation}
\noindent  Using (\ref{auxiliary triple 1}) and \eqref{PMM optimal solution}-\eqref{tilde point conditions} (for $\mu = \mu_k$), one can observe that:
\begin{equation*}
\begin{split}
A\big((1-\frac{\mu_k}{\mu_0})\bm{X}_{r_k}^* + \frac{\mu_k}{\mu_0} \bm{\tilde{X}} - \bm{X}_k\big) + \mu_k  \big((1-\frac{\mu_k}{\mu_0})y_{r_k}^* + \frac{\mu_k}{\mu_0} \tilde{y} - y_k\big) = \\
(1-\frac{\mu_k}{\mu_0})(A\bm{X}_{r_k}^* + \mu_k  y_{r_k}^*) + \frac{\mu_k}{\mu_0} (A\bm{\tilde{X}}+ \mu_k \tilde{y}) - A\bm{X}_k -\mu_k y_k =\\
(1-\frac{\mu_k}{\mu_0}) (b + \mu_k  \lambda_k) + \frac{\mu_k}{\mu_0} (b + \mu_k \lambda_k + \tilde{b}_k +\bar{b}) - A\bm{X}_k - \mu_k y_k =\\
b +\mu_k \lambda_k + \frac{\mu_k}{\mu_0}(\tilde{b}_k+\bar{b}) - A\bm{X}_k - \mu_k y_k = &\ 0,
\end{split}
\end{equation*}
\noindent where the last equality follows from the definition of the neighbourhood $\mathscr{N}_{\mu_k}(\Xi_k,\lambda_k)$. Similarly, one can show that:
\begin{equation*}
-\mu_k \big((1-\frac{\mu_k}{\mu_0})\bm{X}_{r_k}^* + \frac{\mu_k}{\mu_0} \bm{\tilde{X}} - \bm{X}_k\big) + A^\top \big((1-\frac{\mu_k}{\mu_0})y_{r_k}^* +\frac{\mu_k}{\mu_0} \tilde{y} - y_k\big) + \big((1-\frac{\mu_k}{\mu_0})\bm{Z}_{r_k}^* + \frac{\mu_k}{\mu_0} \bm{\tilde{Z}} - \bm{Z}_k\big) = 0.
\end{equation*}
\noindent By combining the previous two relations, we have:
\begin{equation} \label{Lemma boundedness of x,z, relation 1}
\begin{split}
\big((1-\frac{\mu_k}{\mu_0})\bm{X}_{r_k}^* + \frac{\mu_k}{\mu_0} \bm{\tilde{X}} - \bm{X}_k\big)^\top \big((1-\frac{\mu_k}{\mu_0})\bm{Z}_{r_k}^* + \frac{\mu_k}{\mu_0} \bm{\tilde{Z}} - \bm{Z}_k\big) = &\\
\mu_k\big((1-\frac{\mu_k}{\mu_0})\bm{X}_{r_k}^* + \frac{\mu_k}{\mu_0}\bm{\tilde{X}} - \bm{X}_k\big)^\top \big((1-\frac{\mu_k}{\mu_0})\bm{X}_{r_k}^* + \frac{\mu_k}{\mu_0} \bm{\tilde{X}} - \bm{X}_k\big)\  +\\ \mu_k \big((1-\frac{\mu_k}{\mu_0})y_{r_k}^* + \frac{\mu_k}{\mu_0} \tilde{y} - y_k\big)^\top  \big((1-\frac{\mu_k}{\mu_0})y_{r_k}^* + \frac{\mu_k}{\mu_0} \tilde{y} - y_k\big) \geq &\ 0.
\end{split}
\end{equation}
\noindent Observe that (\ref{Lemma boundedness of x,z, relation 1}) can equivalently be written as:
\begin{equation*}
\begin{split}
\big\langle(1-\frac{\mu_k}{\mu_0})X_{r_k}^* + \frac{\mu_k}{\mu_0} \tilde{X}, Z_k \big\rangle + \big\langle (1-\frac{\mu_k}{\mu_0})Z_{r_k}^* + \frac{\mu_k}{\mu_0}\tilde{Z}, X_k \big\rangle \leq \\
\big\langle(1-\frac{\mu_k}{\mu_0})X_{r_k}^* + \frac{\mu_k}{\mu_0} \tilde{X},(1-\frac{\mu_k}{\mu_0})Z_{r_k}^* + \frac{\mu_k}{\mu_0} \tilde{Z}\big\rangle + \langle X_k, Z_k\rangle.
\end{split}
\end{equation*}
\noindent However, from Lemmas \ref{Lemma-boundedness of optimal solutions for sub-problems} and \ref{Lemma tilde point}, we have that $\tilde{X} \succeq \xi I_n$ and $\tilde{Z}  \succeq \xi I_n$, for some positive constant $\xi = \Theta(1)$, $\langle X_{r_k}^*,Z_k \rangle  \geq 0$, $\langle Z_{r_k}^*, X_k\rangle \geq 0$, while $\|(X_{r_k}^*,Z_{r_k}^*)\|_F = O(\sqrt{n})$, and $\|(\tilde{X},\tilde{Z})\|_F = O(\sqrt{n})$. Furthermore, by definition we have that $ n \mu_k = \langle X_k,Z_k \rangle$. By combining all the previous, we obtain:
\begin{equation} \label{Lemma boundedness of x,z, relation 2}
\begin{split}
\frac{\mu_k}{\mu_0} \xi \big(\textnormal{Tr}(X_k) + \textnormal{Tr}(Z_k) \big) = \\
\frac{\mu_k}{\mu_0} \xi\big(\langle I_n, X_k\rangle + \langle I_n, Z_k\rangle\big) \leq \\
\big\langle(1-\frac{\mu_k}{\mu_0})X_{r_k}^* + \frac{\mu_k}{\mu_0} \tilde{X}, Z_k \big\rangle + \big\langle(1-\frac{\mu_k}{\mu_0})Z_{r_k}^* + \frac{\mu_k}{\mu_0} \tilde{Z}, X_k\big\rangle \leq \\
\big\langle(1-\frac{\mu_k}{\mu_0})X_{r_k}^* + \frac{\mu_k}{\mu_0} \tilde{X},(1-\frac{\mu_k}{\mu_0})Z_{r_k}^* + \frac{\mu_k}{\mu_0} \tilde{Z}\big\rangle + \langle X_k, Z_k \rangle = \\
\frac{\mu_k}{\mu_0}(1-\frac{\mu_k}{\mu_0}) \langle X_{r_k}^*, \tilde{Z}\rangle + \frac{\mu_k}{\mu_0} (1-\frac{\mu_k}{\mu_0}) \langle\tilde{X}, Z_r^*\rangle + (\frac{\mu_k}{\mu_0})^2 \langle\tilde{X}, \tilde{Z} \rangle + \langle X_k, Z_k \rangle = &\ O(n \mu_k ),
\end{split}
\end{equation}
\noindent where the first inequality follows since $X_{r_k}^*,\ Z_{r_k}^*,\  \tilde{X},\ \tilde{Z} \in \mathcal{S}^n_+$ and $(\tilde{X},\tilde{Z}) \succeq \xi (I_n,I_n)$. In the penultimate equality we used \eqref{PMM optimal solution} (i.e. $\langle X_{r_k}^*,Z_{r_k}^*\rangle = 0$). Hence, (\ref{Lemma boundedness of x,z, relation 2}) implies that:
$$\textnormal{Tr}(X_k) = O(n), \qquad \textnormal{Tr}(Z_k) = O(n).$$
\noindent From positive definiteness we have that $\|(X_k,Z_k)\|_F = O(n)$. Finally, from the neighbourhood conditions we know that:
$$\bm{C} - A^\top  y_k - \bm{Z}_k + \mu_k (\bm{X}_k - \bm{\Xi}_k) + \frac{\mu_k}{\mu_0} (\bm{\tilde{C}}_k + \bm{\bar{C}}) = 0.$$
\noindent All terms above (except for $y_k$) have a 2-norm that is bounded by some quantity that is $O(n)$ (note that $\|(\bm{\bar{C}},\bar{b})\|_2 = O(\sqrt{n})$ using Assumption \ref{Assumption 2} and the definition in \eqref{starting point}). Hence, using again Assumption \ref{Assumption 2} (i.e. $A$ is full rank, with singular values independent of $n$ and $m$) yields that $\|y_k\|_2 = O(n)$, and completes the proof. 
\end{proof}
\par In what follows, we provide Lemmas \ref{Auxiliary Lemma bound on scaled matrices}--\ref{Auxiliary Lemma scaled rhs of third block of Newton system}, which we use to prove boundedness of the Newton direction computed at every iteration of Algorithm \ref{Algorithm PMM-IPM}, in Lemma \ref{Lemma boundedness Dx Dz}.
\begin{lemma} \label{Auxiliary Lemma bound on scaled matrices}
Let $D_k = S_k^{-\frac{1}{2}}F_k = S_k^{\frac{1}{2}}E_k^{-1}$, where $S_k = E_k F_k$, and $E_k,\ F_k$ are defined as in the Newton system in \eqref{inexact vectorized Newton System}. Then, for any $M \in \mathbb{R}^{n\times n}$, 
\begin{equation*}
\|D_k^{-T} \bm{M}\|^2_2 \leq \frac{1}{(1-\gamma_{\mu})\mu_k}\|Z_k^{\frac{1}{2}}M Z_k^{\frac{1}{2}}\|_F^2,\quad \|D_k\bm{M}\|_2^2 \leq \frac{1}{(1-\gamma_{\mu})\mu_k} \|X_k^{\frac{1}{2}}M X_k^{\frac{1}{2}}\|_F^2,
\end{equation*}
\noindent where $\gamma_{\mu}$ is defined in \eqref{Small neighbourhood}. Moreover, we have that:
\begin{equation*}
\|D_k^{-T}\|_2^2 \leq \frac{1}{(1-\gamma_{\mu})\mu_k} \|Z_k\|_F^2 = O\bigg(\frac{n^2}{\mu_k}\bigg),\qquad \|D_k\|_2^2 \leq \frac{1}{(1-\gamma_{\mu})\mu_k} \|X_k \|_F^2 = O\bigg( \frac{n^2}{\mu_k}\bigg).
\end{equation*}
\end{lemma}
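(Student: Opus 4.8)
The plan is to reduce each of the four inequalities to a Loewner ordering between Kronecker products and then read everything off a single spectral fact coming from the neighbourhood. Writing $\Theta_k \coloneqq H_{P_k}(X_kZ_k) = Z_k^{1/2}X_kZ_k^{1/2}$ (the $P_k = Z_k^{-1/2}$ symmetrization of \eqref{Symmetrization operator}), the defining condition of $\mathscr{N}_{\mu_k}(\Xi_k,\lambda_k)$ in \eqref{Small neighbourhood} gives $\|\Theta_k - \mu_k I_n\|_F \le \gamma_\mu \mu_k$, so every eigenvalue $\lambda$ of $\Theta_k$ obeys $|\lambda-\mu_k|\le\|\Theta_k-\mu_kI_n\|_2\le\|\Theta_k-\mu_kI_n\|_F\le\gamma_\mu\mu_k$; in particular $\nu_{\min}(\Theta_k)\ge(1-\gamma_\mu)\mu_k$. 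Using $\textnormal{vec}(AMB)=(B^\top\otimes A)\bm M$ the right-hand sides are quadratic forms, $\|Z_k^{1/2}MZ_k^{1/2}\|_F^2 = \bm M^\top(Z_k\otimes Z_k)\bm M$ and $\|X_k^{1/2}MX_k^{1/2}\|_F^2 = \bm M^\top(X_k\otimes X_k)\bm M$, so the first two claims become $D_k^{-1}D_k^{-\top}\preceq\tfrac{1}{(1-\gamma_\mu)\mu_k}(Z_k\otimes Z_k)$ and $D_k^\top D_k\preceq\tfrac{1}{(1-\gamma_\mu)\mu_k}(X_k\otimes X_k)$.

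Next I would record the structure of the scaling. A direct computation from the definitions of $E_k$ and $F_k$ in \eqref{inexact vectorized Newton System} (using $(A\otimes B)(C\otimes D)=AC\otimes BD$ together with $Z_k^{-1/2}\Theta_kZ_k^{-1/2}=X_k$) shows that $S_k$ admits the symmetric positive definite representative $S_k = \tfrac12(\Theta_k\otimes I_n + I_n\otimes\Theta_k)$, namely $F_kE_k$, which shares its spectrum with $E_kF_k$; its eigenvalues are the averages $\tfrac12(\lambda_i+\lambda_j)$ of the eigenvalues of $\Theta_k$, whence $S_k\succeq(1-\gamma_\mu)\mu_k I$ and $S_k^{-1}\preceq\tfrac{1}{(1-\gamma_\mu)\mu_k}I$. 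Since $E_k=Z_k^{1/2}\otimes Z_k^{1/2}$ is symmetric with $E_k^2 = Z_k\otimes Z_k$, and $S_k^{1/2}$ is symmetric, the two defining expressions of $D_k$ yield the identities $D_k^{-1}D_k^{-\top}=E_kS_k^{-1}E_k$ and $D_k^\top D_k = E_k^{-1}S_kE_k^{-1}$.

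The first inequality is then immediate: $E_kS_k^{-1}E_k\preceq\tfrac{1}{(1-\gamma_\mu)\mu_k}E_k^2=\tfrac{1}{(1-\gamma_\mu)\mu_k}(Z_k\otimes Z_k)$. For the second, I would first simplify $D_k^\top D_k = E_k^{-1}S_kE_k^{-1}=\tfrac12(X_k\otimes Z_k^{-1}+Z_k^{-1}\otimes X_k)$ (again using $Z_k^{-1/2}\Theta_kZ_k^{-1/2}=X_k$), and then apply the congruence by $X_k^{-1/2}\otimes X_k^{-1/2}$. This turns the target bound into $\tfrac12(I_n\otimes W + W\otimes I_n)\preceq\tfrac{1}{(1-\gamma_\mu)\mu_k}I$ with $W\coloneqq X_k^{-1/2}Z_k^{-1}X_k^{-1/2}$, which holds as soon as $\nu_{\max}(W)\le\tfrac{1}{(1-\gamma_\mu)\mu_k}$; since $\nu_{\max}(W)=1/\nu_{\min}(X_k^{1/2}Z_kX_k^{1/2})$ and $X_k^{1/2}Z_kX_k^{1/2}$ is similar to $\Theta_k$, the bound $\nu_{\min}(X_k^{1/2}Z_kX_k^{1/2})=\nu_{\min}(\Theta_k)\ge(1-\gamma_\mu)\mu_k$ closes the argument.

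Finally, the operator-norm estimates follow by taking the supremum over $\|\bm M\|_2=\|M\|_F=1$: from the first claim $\|D_k^{-\top}\|_2^2\le\tfrac{1}{(1-\gamma_\mu)\mu_k}\sup_{\|M\|_F=1}\|Z_k^{1/2}MZ_k^{1/2}\|_F^2\le\tfrac{1}{(1-\gamma_\mu)\mu_k}\nu_{\max}(Z_k)^2\le\tfrac{1}{(1-\gamma_\mu)\mu_k}\|Z_k\|_F^2$, and symmetrically for $\|D_k\|_2$ in terms of $\|X_k\|_F$; the $O(n^2/\mu_k)$ rate then follows from Lemma \ref{Lemma boundedness of x z}, since positive definiteness gives $\|X_k\|_F\le\textnormal{Tr}(X_k)=O(n)$ and $\|Z_k\|_F\le\textnormal{Tr}(Z_k)=O(n)$. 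The point demanding care — and where I would be most careful — is identifying $S_k$ with its \emph{symmetric} representative $\tfrac12(\Theta_k\otimes I_n+I_n\otimes\Theta_k)$ and doing the square-root bookkeeping so that the identities $D_k^{-1}D_k^{-\top}=E_kS_k^{-1}E_k$ and $D_k^\top D_k=E_k^{-1}S_kE_k^{-1}$ hold exactly; once these Kronecker identities are in place, every inequality collapses onto the single spectral bound $\nu_{\min}(\Theta_k)\ge(1-\gamma_\mu)\mu_k$ supplied by the neighbourhood.
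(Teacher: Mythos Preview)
Your proof is correct and follows essentially the same route as the paper, which defers the first two inequalities to \cite[Lemma 5]{ZhouToh_MATH_PROG} and then derives the operator-norm bounds by plugging in a unit vector achieving the maximum; your Kronecker-product/Loewner-ordering argument is precisely the content of that reference, made self-contained. Your handling of the one genuine subtlety---that $E_kF_k$ is not symmetric but is similar (via $E_k$) to the symmetric $F_kE_k=\tfrac12(\Theta_k\otimes I+I\otimes\Theta_k)$, so that the identities $D_k^{-1}D_k^{-\top}=E_kS_k^{-1}E_k$ and $D_k^\top D_k=E_k^{-1}S_kE_k^{-1}$ hold with the \emph{symmetric} square root---is exactly right and matches how the literature resolves it.
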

\begin{proof}
The proof of the first two inequalities follows exactly the developments in \cite[Lemma 5]{ZhouToh_MATH_PROG}. The bound on the 2-norm of the matrix $D_k^{-T}$ follows by choosing $M$ such that $\bm{M}$ is a unit eigenvector, corresponding to the largest eigenvalue of $D_k^{-T}$. Then, $\|D_k^{-T}\bm{M}\|_2^2 = \|D_k^{-T}\|_2^2$. But, we have that:
\begin{equation*}
\begin{split}
 \|D_k^{-T}\bm{M}\|_2^2 \leq \ &\frac{1}{(1-\gamma_{\mu})\mu_k} \|Z_k^{\frac{1}{2}}MZ_k^{\frac{1}{2}}\|_F^2 \\
=\ &  \frac{1}{(1-\gamma_{\mu})\mu_k}\textnormal{Tr}(Z_k M^\top  Z_k M)\\
\leq\ & \frac{1}{(1-\gamma_{\mu})\mu_k} \|Z_k\|_F^2 = O\bigg( \frac{n^2}{\mu_k}\bigg)
 \end{split}
\end{equation*}
\noindent where we used the cyclic property of the trace as well as Lemma \ref{Lemma boundedness of x z}. The same reasoning applies to deriving the bound for $\|D_k\|_2^2$. 
\end{proof}
\begin{lemma} \label{Auxiliary Lemma scaled third block of Newton system}
Let $D_k$ and $S_k$ be defined as in Lemma \textnormal{\ref{Auxiliary Lemma bound on scaled matrices}}. Then, we have that:
\begin{equation*}
\|D_k^{-T} \bm{\Delta X}_k\|_2^2 + \| D_k \bm{\Delta Z}_k\|_2^2 + 2\langle \Delta X_k, \Delta Z_k \rangle = \|S_k^{-\frac{1}{2}} \bm{R}_{\mu,k}\|_2^2,
\end{equation*}
\noindent where $R_{\mu,k} = \sigma_k \mu_k I_n - Z_k^{\frac{1}{2}} X_k Z_k^{\frac{1}{2}}$. Furthermore, 
\begin{equation*}
\|H_{P_k}(\Delta X_k \Delta Z_k) \|_F \leq \frac{\sqrt{\frac{1+\gamma_{\mu}}{1-\gamma_{\mu}}}}{2}\big(\|D_k^{-T}\bm{\Delta X}_k\|^2_2 + \|D_k \bm{\Delta Z}_k\|_2^2 \big),
\end{equation*}
\noindent where $\gamma_{\mu}$ is defined in \eqref{Small neighbourhood}.
\end{lemma}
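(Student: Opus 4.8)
The plan is to derive both assertions from the third block row of the Newton system \eqref{inexact vectorized Newton System}. Because the accuracy conditions \eqref{Krylov method termination conditions} impose $\|\bm{\mathsf{E}}_{\mu,k}\|_2 = 0$, that row holds \emph{exactly}, so $E_k\bm{\Delta X}_k + F_k\bm{\Delta Z}_k = \bm{R}_{\mu,k}$; the right-hand side is indeed $\bm{R}_{\mu,k}$ because $(Z_k^{\frac12}\otimes Z_k^{\frac12})\bm{X}_k$ is the vectorization of $Z_k^{\frac12}X_kZ_k^{\frac12}$. This exact relation is the only place the error term enters, and it is what lets the identity hold with equality.

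For the identity I would left-multiply this equation by $S_k^{-\frac12}$ and invoke the two equivalent expressions $D_k = S_k^{-\frac12}F_k = S_k^{\frac12}E_k^{-1}$ from Lemma \ref{Auxiliary Lemma bound on scaled matrices}, which yield $S_k^{-\frac12}F_k = D_k$ and, using the symmetry of $E_k$ and $S_k^{\frac12}$, $S_k^{-\frac12}E_k = D_k^{-T}$. This produces $D_k^{-T}\bm{\Delta X}_k + D_k\bm{\Delta Z}_k = S_k^{-\frac12}\bm{R}_{\mu,k}$. Squaring the Euclidean norm of both sides and expanding the left-hand side, the cross term is $2(D_k^{-T}\bm{\Delta X}_k)^\top(D_k\bm{\Delta Z}_k) = 2\bm{\Delta X}_k^\top D_k^{-1}D_k\bm{\Delta Z}_k = 2\bm{\Delta X}_k^\top\bm{\Delta Z}_k = 2\langle \Delta X_k,\Delta Z_k\rangle$, the last equality being the coincidence of the Euclidean inner product of vectorized symmetric matrices with their trace inner product. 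This cancellation is the heart of the first claim and delivers the stated identity immediately, requiring no spectral information.

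For the inequality I would pass to the scaled variables $\widehat{\Delta X} := \mathrm{mat}(D_k^{-T}\bm{\Delta X}_k)$ and $\widehat{\Delta Z} := \mathrm{mat}(D_k\bm{\Delta Z}_k)$, so that $\|\widehat{\Delta X}\|_F = \|D_k^{-T}\bm{\Delta X}_k\|_2$ and $\|\widehat{\Delta Z}\|_F = \|D_k\bm{\Delta Z}_k\|_2$. Carrying the symmetrization $H_{P_k}(\Delta X_k\Delta Z_k)$ through the $D_k$-scaling expresses it as the symmetric product $\tfrac12(\widehat{\Delta X}\widehat{\Delta Z} + \widehat{\Delta Z}\widehat{\Delta X})$ up to a distortion governed by the eigenvalues of $Z_k^{\frac12}X_kZ_k^{\frac12}$; the neighbourhood condition $\|H_{P_k}(X_kZ_k) - \mu_kI_n\|_F \le \gamma_\mu\mu_k$ in \eqref{Small neighbourhood} confines those eigenvalues to $[(1-\gamma_\mu)\mu_k,(1+\gamma_\mu)\mu_k]$, and the ratio of their extremes is exactly $\sqrt{(1+\gamma_\mu)/(1-\gamma_\mu)}$. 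Bounding $\|\tfrac12(\widehat{\Delta X}\widehat{\Delta Z} + \widehat{\Delta Z}\widehat{\Delta X})\|_F \le \|\widehat{\Delta X}\|_F\|\widehat{\Delta Z}\|_F$ by submultiplicativity and then using $2\|\widehat{\Delta X}\|_F\|\widehat{\Delta Z}\|_F \le \|\widehat{\Delta X}\|_F^2 + \|\widehat{\Delta Z}\|_F^2$ produces the claimed sum-of-squares bound with the constant $\tfrac12\sqrt{(1+\gamma_\mu)/(1-\gamma_\mu)}$.

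The first part is essentially mechanical once the cross-term cancellation is spotted, and it parallels \cite[Lemma 5]{ZhouToh_MATH_PROG}. I expect the main obstacle to be the second part: propagating the $P_k$-symmetrization faithfully through the $D_k$-scaling and verifying that the spectral distortion it introduces is controlled by \emph{precisely} $\sqrt{(1+\gamma_\mu)/(1-\gamma_\mu)}$, rather than by a looser condition-number estimate. This is exactly where the neighbourhood \eqref{Small neighbourhood} and the norm bounds of Lemma \ref{Auxiliary Lemma bound on scaled matrices} must be invoked with care.
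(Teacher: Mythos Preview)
Your proposal is correct and matches the paper's approach: the paper itself defers both claims to \cite[Lemma~3.1]{Zhang_SIAM_J_OPT} and \cite[Lemma~3.3]{Zhang_SIAM_J_OPT} respectively, and what you have written is precisely a sketch of those arguments (pre-multiply the exact third block row by $S_k^{-1/2}$, expand the squared norm and observe the cross-term collapses to $2\langle\Delta X_k,\Delta Z_k\rangle$; then pass to scaled variables and control the spectral distortion of the symmetrization via the eigenvalue bounds $[(1-\gamma_\mu)\mu_k,(1+\gamma_\mu)\mu_k]$ on $Z_k^{1/2}X_kZ_k^{1/2}$ implied by the neighbourhood). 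The only caveat is your appeal to ``the symmetry of $S_k^{1/2}$'' in deriving $S_k^{-1/2}E_k = D_k^{-T}$: $S_k = E_kF_k$ is not symmetric in general, so this step needs the slightly more careful justification found in Zhang's setup, but the conclusion and overall route are the same.
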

\begin{proof}
\noindent The equality follows directly by pre-multiplying by $S^{-\frac{1}{2}}$ on both sides of the third block equation of the Newton system in \eqref{inexact vectorized Newton System} and by then taking the 2-norm (see \cite[Lemma 3.1]{Zhang_SIAM_J_OPT}). For a proof of the inequality, we refer the reader to \cite[Lemma 3.3]{Zhang_SIAM_J_OPT}. 
\end{proof}
\begin{lemma} \label{Auxiliary Lemma scaled rhs of third block of Newton system}
Let $S_k$ as defined in Lemma \textnormal{\ref{Auxiliary Lemma bound on scaled matrices}}, and $R_{\mu,k}$ as defined in Lemma \textnormal{\ref{Auxiliary Lemma scaled third block of Newton system}}. Then,
\begin{equation*}
\|S_k^{-\frac{1}{2}} \bm{R}_{\mu,k}\|_2^2 = O(n \mu_k).
\end{equation*}
\end{lemma}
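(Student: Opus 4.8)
The plan is to reduce the estimate to two elementary ingredients: an upper bound on $\|R_{\mu,k}\|_F$ coming from the neighbourhood, and a lower bound on the spectrum of the symmetric scaling operator implicit in $S_k^{-1/2}$. Throughout, write $M_k \coloneqq H_{P_k}(X_kZ_k) = Z_k^{\frac12}X_kZ_k^{\frac12}$, which is symmetric positive definite, so that $R_{\mu,k} = \sigma_k\mu_k I_n - M_k$. First I would observe that, although $S_k = E_kF_k$ is not symmetric, the quadratic form in the statement is governed by the symmetric positive-definite operator $\hat S_k \coloneqq F_kE_k$. A direct Kronecker-product computation, using $E_k = Z_k^{\frac12}\otimes Z_k^{\frac12}$ and the expression for $F_k$ in \eqref{inexact vectorized Newton System}, gives $\hat S_k = \tfrac12\big(M_k\otimes I_n + I_n\otimes M_k\big)$, while the identities $D_k = S_k^{-1/2}F_k = S_k^{1/2}E_k^{-1}$ defining $D_k$, together with the cross-term computation underlying Lemma~\ref{Auxiliary Lemma scaled third block of Newton system}, yield $(S_k^{-1/2})^\top S_k^{-1/2} = E_k^{-1}F_k^{-1} = \hat S_k^{-1}$. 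This identity may therefore be quoted rather than re-derived, and it gives
\[
\|S_k^{-1/2}\bm R_{\mu,k}\|_2^2 = \bm R_{\mu,k}^\top \hat S_k^{-1}\bm R_{\mu,k}.
\]

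Next I would control the spectrum of $\hat S_k$. Since $M_k$ is symmetric positive definite, diagonalising $M_k = Q\Lambda Q^\top$ shows that the eigenvalues of $\hat S_k$ are exactly the averages $\tfrac12\big(\nu_i(M_k)+\nu_j(M_k)\big)$. The neighbourhood condition in \eqref{Small neighbourhood} gives $\|M_k-\mu_k I_n\|_F\le\gamma_\mu\mu_k$, hence $|\nu_i(M_k)-\mu_k|\le\gamma_\mu\mu_k$ for every $i$, so $\nu_i(M_k)\in[(1-\gamma_\mu)\mu_k,(1+\gamma_\mu)\mu_k]$ and therefore $\nu_{\min}(\hat S_k)\ge(1-\gamma_\mu)\mu_k$. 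Consequently $\|\hat S_k^{-1}\|_2\le\big((1-\gamma_\mu)\mu_k\big)^{-1}$, and
\[
\|S_k^{-1/2}\bm R_{\mu,k}\|_2^2 = \bm R_{\mu,k}^\top\hat S_k^{-1}\bm R_{\mu,k}\le\frac{1}{(1-\gamma_\mu)\mu_k}\|R_{\mu,k}\|_F^2 ,
\]
which mirrors the bounds already obtained for $D_k^{-T}$ and $D_k$ in Lemma~\ref{Auxiliary Lemma bound on scaled matrices}.

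Finally I would bound $\|R_{\mu,k}\|_F$. Writing $R_{\mu,k}=(\sigma_k-1)\mu_k I_n+(\mu_k I_n-M_k)$ and using $\|I_n\|_F=\sqrt n$ together with the neighbourhood bound $\|M_k-\mu_k I_n\|_F\le\gamma_\mu\mu_k$ and $0<\sigma_k<1$, the triangle inequality yields $\|R_{\mu,k}\|_F\le(1-\sigma_k)\mu_k\sqrt n+\gamma_\mu\mu_k=O(\sqrt n\,\mu_k)$, so $\|R_{\mu,k}\|_F^2=O(n\mu_k^2)$. Substituting into the previous display gives $\|S_k^{-1/2}\bm R_{\mu,k}\|_2^2\le O(n\mu_k^2)/\big((1-\gamma_\mu)\mu_k\big)=O(n\mu_k)$, which is the claim. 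The only genuinely delicate point is the first step: because $S_k=E_kF_k$ is non-symmetric, one must be careful not to conflate $\|S_k^{-1/2}\,\cdot\,\|_2$ with $\|S_k^{-1}\|_2$, but rather identify it with the quadratic form of the symmetric operator $\hat S_k^{-1}=\tfrac12(M_k\otimes I_n+I_n\otimes M_k)^{-1}$. Once the problem is phrased in terms of $\hat S_k$, the eigenvalue localisation furnished by the neighbourhood does all the work, and the remaining estimates are routine.
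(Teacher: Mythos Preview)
Your proof is correct and follows essentially the same route as the reference the paper defers to (\cite[Lemma 7]{ZhouToh_MATH_PROG}): reduce $\|S_k^{-1/2}\bm{R}_{\mu,k}\|_2^2$ to the quadratic form in $\hat S_k^{-1}=(F_kE_k)^{-1}=\tfrac12(M_k\otimes I_n+I_n\otimes M_k)^{-1}$, localise the spectrum of $\hat S_k$ in $[(1-\gamma_\mu)\mu_k,(1+\gamma_\mu)\mu_k]$ via the neighbourhood bound on $M_k=Z_k^{1/2}X_kZ_k^{1/2}$, and bound $\|R_{\mu,k}\|_F=O(\sqrt n\,\mu_k)$ by the triangle inequality. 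You were right to flag the one subtle point, namely that $S_k=E_kF_k$ is non-symmetric so $S_k^{-1/2}$ is not the usual SPD square root; your justification that the identity $(S_k^{-1/2})^\top S_k^{-1/2}=\hat S_k^{-1}$ is exactly what makes the cross-term in Lemma~\ref{Auxiliary Lemma scaled third block of Newton system} collapse to $2\langle\Delta X_k,\Delta Z_k\rangle$ is the correct way to read the convention used here and in \cite{Zhang_SIAM_J_OPT,ZhouToh_MATH_PROG}.
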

\begin{proof}
The proof is omitted since it follows exactly the developments in  \cite[Lemma 7]{ZhouToh_MATH_PROG}. 
\end{proof}
\par We are now ready to derive bounds for the Newton direction computed at every iteration of Algorithm \ref{Algorithm PMM-IPM}.
\begin{lemma} \label{Lemma boundedness Dx Dz}
Given Assumptions \textnormal{\ref{Assumption 1}} and \textnormal{\ref{Assumption 2}}, and the Newton direction $(\Delta X_k, \Delta y_k, \Delta Z_k)$ obtained by solving system \textnormal{\eqref{inexact vectorized Newton System}} during an arbitrary iteration $k \geq 0$ of Algorithm \textnormal{\ref{Algorithm PMM-IPM}}, we have that:
$$\|H_{P_k}(\Delta X_k \Delta Z_k)\|_F = O(n^{4}\mu),\qquad \|(\bm{\Delta X}_k,\Delta y_k,\bm{\Delta Z}_k)\|_2 = O(n^{3}).$$
\end{lemma}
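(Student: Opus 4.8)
The plan is to reduce all three estimates to a single bound on the scaled quantity $u_k := \|D_k^{-T}\bm{\Delta X}_k\|_2^2 + \|D_k \bm{\Delta Z}_k\|_2^2$, and to prove $u_k = O(n^4\mu_k)$. Indeed, the inequality in Lemma~\ref{Auxiliary Lemma scaled third block of Newton system} gives $\|H_{P_k}(\Delta X_k\Delta Z_k)\|_F \leq \tfrac12\sqrt{(1+\gamma_\mu)/(1-\gamma_\mu)}\,u_k$, so the first estimate is immediate from $u_k = O(n^4\mu_k)$. For the unscaled directions, the factorisations $\bm{\Delta X}_k = D_k^\top(D_k^{-T}\bm{\Delta X}_k)$ and $\bm{\Delta Z}_k = D_k^{-1}(D_k\bm{\Delta Z}_k)$ combined with the bounds $\|D_k\|_2,\|D_k^{-T}\|_2 = O(n/\sqrt{\mu_k})$ of Lemma~\ref{Auxiliary Lemma bound on scaled matrices} give $\|\bm{\Delta X}_k\|_2,\|\bm{\Delta Z}_k\|_2 \leq O(n/\sqrt{\mu_k})\sqrt{u_k}$, turning $u_k = O(n^4\mu_k)$ into $\|\bm{\Delta X}_k\|_2,\|\bm{\Delta Z}_k\|_2 = O(n^3)$. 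The multiplier direction is then recovered from the dual block of \eqref{inexact vectorized Newton System}, namely $A^\top \Delta y_k = \bm{r}_{d,k} + \mu_k\bm{\Delta X}_k - \bm{\Delta Z}_k$, where $\bm{r}_{d,k}$ is its (inexact) right-hand side; since $A^\top$ is injective with $\eta_{\min}(A)\geq K_{A,1}$ by Assumption~\ref{Assumption 2}, and the right-hand side will be seen to be $O(n^3)$, this yields $\|\Delta y_k\|_2 = O(n^3)$.

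It remains to bound $u_k$. The equality in Lemma~\ref{Auxiliary Lemma scaled third block of Newton system} together with Lemma~\ref{Auxiliary Lemma scaled rhs of third block of Newton system} gives $u_k + 2\langle\Delta X_k,\Delta Z_k\rangle = \|S_k^{-\frac{1}{2}}\bm{R}_{\mu,k}\|_2^2 = O(n\mu_k)$, so the entire problem collapses to a lower bound $\langle\Delta X_k,\Delta Z_k\rangle \geq -O(n^4\mu_k)$. First I would use membership of $(X_k,y_k,Z_k)$ in $\mathscr{N}_{\mu_k}(\Xi_k,\lambda_k)$ to rewrite the first two right-hand side blocks of \eqref{inexact vectorized Newton System} in the reduced form
\begin{equation*}
\begin{split}
\bm{r}_{d,k} &= -(1-\sigma_k)\mu_k(\bm{X}_k - \bm{\Xi}_k) - \tfrac{(1-\sigma_k)\mu_k}{\mu_0}\bm{\bar{C}} - \tfrac{\mu_k}{\mu_0}\bm{\tilde{C}}_k + \bm{\mathsf{E}}_{d,k},\\
r_{p,k} &= (1-\sigma_k)\mu_k(y_k - \lambda_k) - \tfrac{(1-\sigma_k)\mu_k}{\mu_0}\bar{b} - \tfrac{\mu_k}{\mu_0}\tilde{b}_k + \epsilon_{p,k},
\end{split}
\end{equation*}
and to observe, using Lemmas~\ref{Lemma-boundedness of optimal solutions for sub-problems} and \ref{Lemma boundedness of x z} (so that $\|\bm{X}_k\|_F,\|y_k\|_2 = O(n)$ and $\|\bm{\Xi}_k\|_2,\|\lambda_k\|_2 = O(\sqrt{n})$), Assumption~\ref{Assumption 2} ($\|\bm{\bar{C}}\|_2 = O(\sqrt{n})$), the neighbourhood bound $\|(\tilde{b}_k,\bm{\tilde{C}}_k)\|_2 \leq K_N$, and the accuracy condition \eqref{Krylov method termination conditions}, that $\|r_{p,k}\|_2 = O(n\mu_k)$ and $\|\bm{r}_{d,k}\|_2 = O(n\mu_k)$.

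The key device is then to split $(\bm{\Delta X}_k,\Delta y_k,\bm{\Delta Z}_k) = (\bm{\Delta X}',\Delta y',\bm{\Delta Z}') + (\bm{h}_X, h_y, \bm{h}_Z)$, where the particular part solves the first two blocks of \eqref{inexact vectorized Newton System} and the homogeneous part solves the corresponding homogeneous system. Since $A$ has full row rank, a cheap particular solution is available: take $\Delta y' = 0$, $\bm{\Delta X}' = A^\top(AA^\top)^{-1}r_{p,k}$ and $\bm{\Delta Z}' = \bm{r}_{d,k} + \mu_k\bm{\Delta X}'$, for which $\eta_{\min}(A)\geq K_{A,1}$ together with the residual estimates above give $\|\bm{\Delta X}'\|_2,\|\bm{\Delta Z}'\|_2 = O(n\mu_k)$. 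For the homogeneous part, eliminating $h_y$ through $A\bm{h}_X + \mu_k h_y = 0$ shows $\langle h_X, h_Z\rangle = \mu_k\|\bm{h}_X\|_2^2 + \mu_k\|h_y\|_2^2 \geq 0$; this nonnegativity is precisely the payoff of the primal--dual regularization and is the structural fact that is unavailable in the unregularized setting. Expanding $\langle\Delta X_k,\Delta Z_k\rangle$ and discarding this nonnegative term leaves
\begin{equation*}
\langle\Delta X_k,\Delta Z_k\rangle \geq \langle\Delta X_k,\Delta Z'\rangle + \langle\Delta X',\Delta Z_k\rangle - \langle\Delta X',\Delta Z'\rangle,
\end{equation*}
and Cauchy--Schwarz, together with $\|\bm{\Delta X}_k\|_2,\|\bm{\Delta Z}_k\|_2 \leq O(n/\sqrt{\mu_k})\sqrt{u_k}$ and $\|\bm{\Delta X}'\|_2,\|\bm{\Delta Z}'\|_2 = O(n\mu_k)$, bounds the right-hand side below by $-O(n^2\sqrt{\mu_k})\sqrt{u_k} - O(n^2\mu_k^2)$. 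Inserting this into $u_k = O(n\mu_k) - 2\langle\Delta X_k,\Delta Z_k\rangle$ produces a quadratic inequality $u_k \leq O(n^2\sqrt{\mu_k})\sqrt{u_k} + O(n^2\mu_k)$ in $\sqrt{u_k}$, whose resolution gives $\sqrt{u_k} = O(n^2\sqrt{\mu_k})$, that is $u_k = O(n^4\mu_k)$, completing the argument. I expect the main obstacle to be the book-keeping of the middle step: reducing the residuals correctly through the neighbourhood identities and organising the particular/homogeneous split so that the sign of $\langle h_X, h_Z\rangle$ can be exploited, all while keeping every implied constant independent of $n$ and $m$.
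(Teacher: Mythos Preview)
Your proposal is correct, and the core structural fact you exploit---that the homogeneous part satisfies $\langle h_X,h_Z\rangle = \mu_k\|\bm{h}_X\|_2^2 + \mu_k\|h_y\|_2^2 \geq 0$---is exactly the same as the paper's key observation \eqref{Lemma boundedness Dx Dz, complementarity positivity}. The organisation, however, is genuinely different. The paper does not split into pseudo-inverse particular plus homogeneous parts; instead it shifts the Newton direction by an explicit combination of the auxiliary triples $(X_{r_k}^*,y_{r_k}^*,Z_{r_k}^*)$ and $(\tilde{X},\tilde{y},\tilde{Z})$ from Lemmas~\ref{Lemma-boundedness of optimal solutions for sub-problems}--\ref{Lemma tilde point} (plus a further correction $(\hat{X},0,\hat{Z})$), so that the shifted triple $(\bar{X},\bar{y},\bar{Z})$ in \eqref{Lemma Dx Dz boundedness, auxiliary triple} satisfies the homogeneous first two blocks. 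From $\langle\bar{X},\bar{Z}\rangle\geq 0$ the paper then bounds $\|D_k^{-T}\bm{\bar{X}}\|_2$ \emph{directly} via the scaled third block equation \eqref{Lemma boundedness Dx Dz, relation 1}, and recovers $\|D_k^{-T}\bm{\Delta X}_k\|_2 = O(n^2\sqrt{\mu_k})$ by a single triangle inequality---no quadratic inequality in $\sqrt{u_k}$ is needed.

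What each approach buys: your route is more self-contained, since the particular solution $\bm{\Delta X}'=A^\top(AA^\top)^{-1}r_{p,k}$ requires only full row rank of $A$ and the residual estimate $\|(r_{p,k},\bm{r}_{d,k})\|_2=O(n\mu_k)$, so Lemmas~\ref{Lemma-boundedness of optimal solutions for sub-problems} and~\ref{Lemma tilde point} are never invoked in this step. The price is the closing quadratic-inequality argument, which is elementary but adds a layer. The paper's route is cleaner at the end (one triangle inequality instead of a bootstrap) but front-loads the work into constructing a carefully tailored shift from the PMM sub-problem solutions, tying the estimate more tightly to the surrounding framework.
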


\begin{proof}
\par Consider an arbitrary iteration $k$ of Algorithm \ref{Algorithm PMM-IPM}. We invoke Lemmas \ref{Lemma-boundedness of optimal solutions for sub-problems}, \ref{Lemma tilde point}, for $\mu = \sigma_k \mu_k$. That is, there exists a triple $(X_{r_k}^*,y_{r_k}^*,Z_{r_k}^*)$ satisfying \eqref{PMM optimal solution}, and a triple $(\tilde{X},\tilde{y},\tilde{Z})$ satisfying \eqref{tilde point conditions}, for $\mu = \sigma_k \mu_k$. Using the centering parameter $\sigma_k$, define: 
\begin{equation} \label{Boundedness dx dz c hat b hat equation}
\begin{split}
 \bm{\hat{C}} =& -\bigg(\frac{\sigma_k}{\mu_0} \bm{\bar{C}} - (1-\sigma_k)\big(\bm{X}_k - \bm{\Xi}_k + \frac{\mu_k}{\mu_0}(\bm{\tilde{X}}-\bm{X}_{r_k}^*)\big)+\frac{1}{\mu_k}\bm{\mathsf{E}}_{d,k}\bigg),\\
  \hat{b} =&  -\bigg(\frac{\sigma_k}{\mu_0} \bar{b} + (1-\sigma_k)\big(y_k - \lambda_k +\frac{\mu_k}{\mu_0}(\tilde{y}-y_{r_k}^*) \big)+ \frac{1}{\mu_k}\epsilon_{p,k}\bigg),
 \end{split}
\end{equation}
\noindent where $\bar{b},\ \bar{C},\ \mu_0$ are given by \eqref{starting point} and $\epsilon_{p,k}$, $\mathsf{E}_{d,k}$ model the errors which occur when system \eqref{exact non-vectorized Newton System} is solved inexactly. Notice that these errors are required to satisfy \eqref{Krylov method termination conditions} at every iteration $k$.  Using Lemmas \ref{Lemma-boundedness of optimal solutions for sub-problems}, \ref{Lemma tilde point}, \ref{Lemma boundedness of x z}, relation \eqref{Krylov method termination conditions}, and Assumption \ref{Assumption 2}, we know that $\|(\bm{\hat{C}},\hat{b})\|_2 = O(n)$. Then, by applying again Assumption \ref{Assumption 2}, we know that there must exist a matrix $\hat{X} \in \mathbb{R}^{n\times n}$ such that $\mathcal{A}\hat{X} = \hat{b},\ \|\hat{X}\|_F = O(n)$, and by setting $\hat{Z} =  \hat{C}  + \mu \hat{X}$, we have that $\|\hat{Z}\|_F = O(n)$ and:
\begin{equation} \label{Boundedness of Dx,Dz, hat point}
 \mathcal{A}\hat{X} = \hat{b},\qquad  \hat{Z} - \mu_k \hat{X} = \hat{C}.
 \end{equation}
\par Using $(X_{r_k}^*,y_{r_k}^*,Z_{r_k}^*)$, $(\tilde{X},\tilde{y},\tilde{Z})$, as well as the triple $(\hat{X},0,\hat{Z})$, where $(\hat{X},\hat{Z})$ is defined in \eqref{Boundedness of Dx,Dz, hat point}, we define the following auxiliary triple:
\begin{equation} \label{Lemma Dx Dz boundedness, auxiliary triple}
(\bar{X},\bar{y},\bar{Z}) = (\Delta X_k, \Delta y_k, \Delta Z_k) + \frac{\mu_k}{\mu_0} (\tilde{X}, \tilde{y}, \tilde{Z}) - \frac{\mu_k}{\mu_0} (X_{r_k}^*, y_{r_k}^*,Z_{r_k}^*) + \mu_k (\hat{X},0,\hat{Z}).
\end{equation}
\noindent Using \eqref{Lemma Dx Dz boundedness, auxiliary triple}, \eqref{Boundedness dx dz c hat b hat equation}, and the second block equation of \eqref{inexact vectorized Newton System}:
\begin{equation*}
\begin{split}
A\bm{\bar{X}} + \mu_k \bar{y} = &\ (A \bm{\Delta X}_k + \mu_k \Delta y_k) + \frac{\mu_k}{\mu_0}((A\bm{\tilde{X}}+  \mu_k \tilde{y})- (A\bm{X}_{r_k}^*+  \mu_k  y_{r_k}^*)) + \mu_k A\bm{\hat{X}}\\
= &\ \big(b + \sigma_k\frac{\mu_k}{\mu_0}\bar{b}-A\bm{X}_k - \sigma_k \mu_k (y_k-\lambda_k) + \epsilon_{p,k}\big) \\ &\ + \frac{\mu_k}{\mu_0}((A\bm{\tilde{X}} +  \mu_k \tilde{y})- (A\bm{X}_{r_k}^*+  \mu_k y_{r_k}^*))\\
 &\ - \mu_k \big(\sigma_k \frac{\bar{b}}{\mu_0} + (1-\sigma_k)(y_k-\lambda_k)\big) - \frac{\mu_k}{\mu_0}(1-\sigma_k)\mu_k (\tilde{y}-y_{r_k}^*) - \epsilon_{p,k}.
\end{split}
\end{equation*}

\noindent Then, by deleting opposite terms in the right-hand side, and employing \eqref{PMM optimal solution}-\eqref{tilde point conditions} (evaluated at $\mu = \sigma_k \mu_k$ from the definition of  $(X_{r_k}^*,y_{r_k}^*,Z_{r_k}^*)$ and  $(\tilde{X},\tilde{y},\tilde{Z})$), we have
\begin{equation*}
\begin{split}
A\bm{\bar{X}} + \mu_k \bar{y} = &\ \big(b + \sigma_k\frac{\mu_k}{\mu_0}\bar{b}-A\bm{X}_k - \sigma_k \mu_k (y_k-\lambda_k)\big) + \frac{\mu_k}{\mu_0}(b+\sigma_k\mu_k  \lambda_k+\bar{b}+\tilde{b}_k)\\
 &\ - \frac{\mu_k}{\mu_0} (\sigma_k \mu_k \lambda_k + b) - \mu_k \big(\sigma_k \frac{\bar{b}}{\mu_0} + (1-\sigma_k)(y_k-\lambda_k)\big)\\
 = &\ b + \frac{\mu_k}{\mu_0}(\bar{b}+\tilde{b}_k) - A\bm{X}_k - \mu_k  (y_k-\lambda_k)\\
 = &\ 0, 
\end{split}
\end{equation*}
\noindent where the last equation follows from the neighbourhood conditions (i.e. $(X_k,y_k,Z_k) \in \mathscr{N}_{\mu_k}(\Xi_k,\lambda_k)$). Similarly, we can show that:
$$ A^\top \bar{y} + \bar{Z}-\mu_k \bar{X} = 0.$$
\par The previous two equalities imply that: 
\begin{equation} \label{Lemma boundedness Dx Dz, complementarity positivity}
\begin{split}
\langle \bar{X},\bar{Z}\rangle = 
\langle \bar{X}, - \mathcal{A^*} \bar{y}+\mu_k \bar{X}\rangle = \mu_k \langle \bar{X}, \bar{X} \rangle + \mu_k \bar{y}^\top \bar{y} \geq 0.
\end{split}
\end{equation}

\noindent On the other hand, using the last block equation of the Newton system (\ref{inexact vectorized Newton System}), we have:
\begin{equation*}
E_k\bm{\bar{X}} + F_k \bm{\bar{Z}} = \bm{R_{\mu,k}}+ \frac{\mu_k}{\mu_0} E_k(\bm{\tilde{X}}-\bm{X}_{r_k}^* + \mu_0 \bm{\hat{X}})+\frac{\mu_k}{\mu_0}  F_k(\bm{\tilde{Z}}- \bm{Z}_{r_k}^* + \mu_0 \bm{\hat{Z}}),
\end{equation*}
\noindent where $R_{\mu,k}$ is defined as in Lemma \ref{Auxiliary Lemma scaled third block of Newton system}. Let $S_k$ be defined as in Lemma \ref{Auxiliary Lemma bound on scaled matrices}. By multiplying both sides of the previous equation by $S_k^{-\frac{1}{2}}$, we get:
\begin{equation} \label{Lemma boundedness Dx Dz, relation 1}
D_k^{-T}\bm{\bar{X}} + D_k\bm{\bar{Z}} = S_k^{-\frac{1}{2}}\bm{R_{\mu,k}}+ \frac{\mu_k}{\mu_0} \big(D_k^{-T}(\bm{\tilde{X}}-\bm{X}_{r_k}^* + \mu_0 \bm{\hat{X}}) + D_k(\bm{\tilde{Z}}-\bm{Z}_{r_k}^* + \mu_0 \bm{\hat{Z}})\big).
\end{equation}
\noindent But from (\ref{Lemma boundedness Dx Dz, complementarity positivity}) we know that $\langle\bar{X}, \bar{Z}\rangle \geq 0$ and hence:
\begin{equation*}
\|D_k^{-T}\bm{\bar{X}} + D_k \bm{\bar{Z}}\|_2^2 \geq \|D_k^{-T} \bm{\bar{X}}\|_2^2 + \|D_k \bm{\bar{Z}}\|_2^2.
\end{equation*}
\noindent Combining (\ref{Lemma boundedness Dx Dz, relation 1}) with the previous inequality, gives:
\begin{equation*}
\begin{split}
\|D_k^{-T}\bm{\bar{X}}\|_2^2  \leq \ \bigg\{&\|S_k^{-\frac{1}{2}}\bm{R}_{\mu,k}\|_2 +\frac{\mu_k}{\mu_0} \bigg(\|D_k^{-T}(\bm{\tilde{X}}-\bm{X}_{r_k}^* + \mu_0 \bm{\hat{X}})\|_2  \\ &\ +  \|D_k(\bm{\tilde{Z}}-\bm{Z}_{r_k}^* + \mu_0 \bm{\hat{Z}})\|_2\bigg) \bigg\}^2.
\end{split}
\end{equation*}
\par We take square roots, use \eqref{Lemma Dx Dz boundedness, auxiliary triple} and apply the triangular inequality, to get:
\begin{equation} \label{Lemma boundedness Dx Dz, relation 2}
\begin{split}
\|D_k^{-T} \bm{\Delta X}_k \|_2 \leq &\ \|S_k^{-\frac{1}{2}} \bm{R}_{\mu,k}\|_2
+ \frac{\mu_k}{\mu_0}\bigg( 2\|D_k^{-T} (\bm{\tilde{X}}-\bm{X}_{r_k}^* + \mu_0 \bm{\hat{X}})\|_2 \\ &\ +\|D_k(\bm{\tilde{Z}}-\bm{Z}_{r_k}^* + \mu_0 \bm{\hat{Z}})\|_2\bigg).
\end{split}
\end{equation} 
\par We now proceed to bounding the terms in the right hand side of (\ref{Lemma boundedness Dx Dz, relation 2}). A bound for the first term of the right hand side is given by Lemma \ref{Auxiliary Lemma scaled rhs of third block of Newton system}, that is:
\[ \|S_k^{-\frac{1}{2}} \bm{R}_{\mu,k}\|_2  = O(n^{\frac{1}{2}}\mu_k^{\frac{1}{2}}).\]
\noindent On the other hand, we have (from Lemma \ref{Auxiliary Lemma bound on scaled matrices}) that 
\[\|D_k^{-T}\|_2 = O\Bigg( \frac{n}{\mu_k^{\frac{1}{2}}}\Bigg),\qquad \|D_k\|_2 = O\Bigg( \frac{n}{\mu_k^{\frac{1}{2}}}\Bigg).\]
\noindent Hence, using the previous bounds, as well as Lemmas \ref{Lemma-boundedness of optimal solutions for sub-problems}, \ref{Lemma tilde point}, and \eqref{Boundedness of Dx,Dz, hat point}, we obtain:
\begin{equation*}
\begin{split}
2\frac{\mu_k}{\mu_0}\|D_k^{-T}(\bm{\tilde{X}}-\bm{X}_{r_k}^* + \mu_0 \bm{\hat{X}})\|_2 +\frac{\mu_k}{\mu_0}\|D_k(\bm{\tilde{Z}}-\bm{Z}_{r_k}^* + \mu_0 \bm{\hat{Z}})\|_2 =  O\big(n^{2}\mu_k^{\frac{1}{2}}\big),
\end{split}
\end{equation*}
\noindent Combining all the previous bounds yields that $\|D_k^{-T}\bm{\Delta X}_k\|_2 = O(n^2 \mu_k^{\frac{1}{2}})$. One can bound $\|D_k \bm{\Delta Z}_k\|_2$ in the same way. The latter is omitted for ease of presentation. 
\par Furthermore, we have that:
$$\|\bm{\Delta X}_k\|_2 = \|D_k D_k^{-T} \bm{\Delta X}_k\|_2 \leq \|D_k\|_2\|D_k^{-T} \bm{\Delta X}_k\|_2 = O(n^{3}).$$
\noindent Similarly, we can show that $\|\bm{\Delta Z}_k \|_2 = O(n^{3})$. From the first block equation of the Newton system in \eqref{inexact vectorized Newton System}, alongside Assumption \ref{Assumption 2}, we can show that $\|\Delta y_k\|_2 = O(n^{3})$.
\par Finally, using the previous bounds, as well as Lemma \ref{Auxiliary Lemma scaled third block of Newton system}, we obtain the desired bound on $\|H_{P_k}(\Delta X_k \Delta Z_k)\|_F$, that is:
\[\|H_{P_k}(\Delta X_k \Delta Z_k)\|_F = O(n^4\mu_k),\]
\noindent which completes the proof.  
\end{proof}

\noindent We can now prove (Lemmas \ref{Lemma step-length-part 1}--\ref{Lemma step-length-part 2}) that at every iteration of Algorithm \ref{Algorithm PMM-IPM} there exists a step-length $\alpha_k > 0$, using which, the new iterate satisfies the conditions required by the algorithm. The lower bound on any such step-length will later determine the polynomial complexity of the method. To that end, we assume the following notation:
\[\big(X_k(\alpha),y_k(\alpha),Z_k(\alpha)\big) \equiv (X_k + \alpha \Delta X_k, y_k + \alpha \Delta y_k, Z_k + \alpha \Delta Z_k).\]
\begin{lemma} \label{Lemma step-length-part 1}
Given Assumptions  \textnormal{\ref{Assumption 1}}, \textnormal{\ref{Assumption 2}}, and by letting ${P_k}(\alpha) = Z_k(\alpha)^{\frac{1}{2}}$, there exists a step-length ${\alpha^*} \in (0,1)$, such that for all $\alpha \in [0,{\alpha^*}]$ and for all iterations $k \geq 0$ of Algorithm \textnormal{\ref{Algorithm PMM-IPM}}, the following relations hold:
\begin{equation} \label{Lemma step-length relation 1}
\langle X_k + \alpha \Delta X_k,Z_k + \alpha \Delta Z_k\rangle \geq (1-\alpha(1-\beta_1))\langle X_k,Z_k \rangle,
\end{equation}
\begin{equation} \label{Lemma step-length relation 2}
\|H_{{P_k}(\alpha)}(X_k(\alpha)Z_k(\alpha)) - \mu_k(\alpha)\|_F \leq \gamma_{\mu}\mu_k(\alpha),
\end{equation}
\begin{equation} \label{Lemma step-length relation 3}
\langle X_k + \alpha \Delta X_k, Z_k + \alpha \Delta Z_k \rangle \leq (1-\alpha(1-\beta_2))\langle X_k, Z_k \rangle,
\end{equation}
where, without loss of generality, $\beta_1 = \frac{\sigma_{\min}}{2}$ and $\beta_2 = 0.99$. Moreover, ${\alpha^*} \geq \frac{{\kappa^*}}{n^{4}}$ for all $k\geq 0$, where ${\kappa^*} > 0$ is independent of $n$, $m$.
\end{lemma}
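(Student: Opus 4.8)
The plan is to prove the three relations by writing each controlled quantity as a polynomial in $\alpha$, fixing the constant and linear terms via the third block of the Newton system \eqref{inexact vectorized Newton System} together with neighbourhood membership at $\alpha = 0$, and absorbing the quadratic terms through the bounds of Lemma \ref{Lemma boundedness Dx Dz}. First I would expand $\langle X_k(\alpha),Z_k(\alpha)\rangle = \langle X_k,Z_k\rangle + \alpha(\langle \Delta X_k, Z_k\rangle + \langle X_k, \Delta Z_k\rangle) + \alpha^2\langle\Delta X_k,\Delta Z_k\rangle$. Taking the trace of the third block equation of \eqref{inexact vectorized Newton System} (recall $\|\bm{\mathsf{E}}_{\mu,k}\|_2 = 0$ by \eqref{Krylov method termination conditions}, and that $H_{P_k}$ preserves the trace) gives $\langle \Delta X_k, Z_k\rangle + \langle X_k, \Delta Z_k\rangle = \textnormal{Tr}(\sigma_k\mu_k I_n - X_kZ_k) = -(1-\sigma_k)n\mu_k$, so $\langle X_k(\alpha),Z_k(\alpha)\rangle = n\mu_k(1-\alpha(1-\sigma_k)) + \alpha^2\langle\Delta X_k,\Delta Z_k\rangle$. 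Since $\langle \Delta X_k, \Delta Z_k\rangle = \textnormal{Tr}(H_{P_k}(\Delta X_k\Delta Z_k))$, Lemma \ref{Lemma boundedness Dx Dz} yields $|\langle\Delta X_k,\Delta Z_k\rangle| \le \sqrt{n}\,\|H_{P_k}(\Delta X_k\Delta Z_k)\|_F = O(n^{9/2}\mu_k)$. Using $\sigma_k\in[\sigma_{\min},\sigma_{\max}]$ and $\sigma_{\max}\le 0.5 < 0.99 = \beta_2$, relations \eqref{Lemma step-length relation 1} and \eqref{Lemma step-length relation 3} reduce, after dividing by $\alpha$, to $\alpha|\langle\Delta X_k,\Delta Z_k\rangle| \le \tfrac{\sigma_{\min}}{2}n\mu_k$ and $\alpha|\langle\Delta X_k,\Delta Z_k\rangle| \le (\beta_2-\sigma_{\max})n\mu_k$ respectively, both of which hold once $\alpha = O(n^{-7/2})$.

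The delicate part is \eqref{Lemma step-length relation 2}, where the scaling $P_k(\alpha) = Z_k(\alpha)^{\frac{1}{2}}$ itself varies with $\alpha$. My strategy is to reduce this varying-scaling residual to a frozen-scaling one. Because $P_k(\alpha) = Z_k(\alpha)^{\frac{1}{2}}$, we have $H_{P_k(\alpha)}(X_k(\alpha)Z_k(\alpha)) = Z_k(\alpha)^{\frac{1}{2}}X_k(\alpha)Z_k(\alpha)^{\frac{1}{2}}$, a symmetric matrix whose eigenvalues coincide with those of $X_k(\alpha)Z_k(\alpha)$. I would then invoke the elementary fact that for any $N$ with real eigenvalues $\{\nu_i\}$ one has $\|\tfrac12(N+N^\top)\|_F^2 = \tfrac12\big(\textnormal{Tr}(N^2) + \|N\|_F^2\big) \ge \sum_i\nu_i^2$, the inequality following from $\|N\|_F^2 \ge \sum_i|\nu_i|^2$ (Schur). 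Applying this with $N = Z_k^{\frac{1}{2}}\big(X_k(\alpha)Z_k(\alpha)\big)Z_k^{-\frac{1}{2}} - \mu_k(\alpha)I_n$, i.e. the \emph{frozen} scaling $P_k = Z_k^{\frac{1}{2}}$, whose eigenvalues are the real numbers $\lambda_i(X_k(\alpha)Z_k(\alpha)) - \mu_k(\alpha)$ provided $X_k(\alpha),Z_k(\alpha)\succ 0$, shows that $\|H_{P_k(\alpha)}(X_k(\alpha)Z_k(\alpha)) - \mu_k(\alpha)I_n\|_F \le \|H_{P_k}(X_k(\alpha)Z_k(\alpha)) - \mu_k(\alpha)I_n\|_F$. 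Hence it suffices to bound the frozen-scaling residual.

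For the frozen scaling, $H_{P_k}$ is linear in its argument, so I would expand $H_{P_k}(X_k(\alpha)Z_k(\alpha)) = (1-\alpha)H_{P_k}(X_kZ_k) + \alpha\sigma_k\mu_k I_n + \alpha^2 H_{P_k}(\Delta X_k\Delta Z_k)$, where the linear term is replaced using the identity $H_{P_k}(\Delta X_k Z_k + X_k\Delta Z_k) = \sigma_k\mu_k I_n - H_{P_k}(X_kZ_k)$ read off the third block of \eqref{inexact vectorized Newton System}. Subtracting $\mu_k(\alpha)I_n$, applying the triangle inequality, using $\|H_{P_k}(X_kZ_k)-\mu_kI_n\|_F\le\gamma_\mu\mu_k$ from the neighbourhood at $\alpha=0$, and bounding the leftover multiple of $I_n$ by $\tfrac{\alpha^2}{n}|\textnormal{Tr}(H_{P_k}(\Delta X_k\Delta Z_k))|\,\|I_n\|_F \le \alpha^2\|H_{P_k}(\Delta X_k\Delta Z_k)\|_F$, I obtain $\|H_{P_k}(X_k(\alpha)Z_k(\alpha)) - \mu_k(\alpha)I_n\|_F \le (1-\alpha)\gamma_\mu\mu_k + 2\alpha^2\|H_{P_k}(\Delta X_k\Delta Z_k)\|_F = (1-\alpha)\gamma_\mu\mu_k + O(\alpha^2 n^4\mu_k)$ by Lemma \ref{Lemma boundedness Dx Dz}. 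Combining this with the lower bound $\mu_k(\alpha)\ge(1-\alpha(1-\tfrac{\sigma_{\min}}{2}))\mu_k$ obtained from \eqref{Lemma step-length relation 1} reduces \eqref{Lemma step-length relation 2} to the requirement $O(\alpha n^4)\le \tfrac{\gamma_\mu\sigma_{\min}}{2}$, which holds once $\alpha = O(n^{-4})$. Setting $\alpha^*$ to be the minimum of the three thresholds gives $\alpha^*\ge\kappa^*/n^4$, the binding constraint coming from \eqref{Lemma step-length relation 2}.

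The main obstacle is precisely the $\alpha$-dependent scaling in \eqref{Lemma step-length relation 2}, handled by the minimality inequality above, which requires $X_k(\alpha),Z_k(\alpha)$ to remain positive definite so that $X_k(\alpha)Z_k(\alpha)$ has real eigenvalues. I would close this by a standard continuity (bootstrapping) argument: on the largest interval $[0,\bar\alpha)$ over which positive definiteness holds, the bound \eqref{Lemma step-length relation 2} proved there forces every eigenvalue of $X_k(\alpha)Z_k(\alpha)$ to satisfy $\lambda_i \ge (1-\gamma_\mu)\mu_k(\alpha) > 0$, so neither $X_k(\alpha)$ nor $Z_k(\alpha)$ can become singular before $\kappa^*/n^4$; hence $\bar\alpha \ge \kappa^*/n^4$ and the apparent circularity is removed. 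The choices $\beta_1 = \tfrac{\sigma_{\min}}{2}$ and $\beta_2 = 0.99$ are exactly what make the slacks $\sigma_k - \beta_1 \ge \tfrac{\sigma_{\min}}{2}$ and $\beta_2 - \sigma_k \ge \beta_2 - \sigma_{\max}$ strictly positive, which is all the argument uses.
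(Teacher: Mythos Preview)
Your proof is correct and follows essentially the same approach as the paper: expand each quantity in $\alpha$, fix the linear term via the third Newton block, and control the quadratic term with Lemma~\ref{Lemma boundedness Dx Dz}, with \eqref{Lemma step-length relation 2} supplying the binding $O(n^{-4})$ threshold. The only cosmetic differences are that the paper cites \cite[Lemma~4.2]{Zhang_SIAM_J_OPT} for the scaling-monotonicity inequality you prove inline, and it extracts the slightly sharper bound $|\langle\Delta X_k,\Delta Z_k\rangle|\le K_\Delta^2 n^4\mu_k$ from the scaled directions $D_k^{-T}\bm{\Delta X}_k$, $D_k\bm{\Delta Z}_k$ appearing inside the proof of Lemma~\ref{Lemma boundedness Dx Dz}, rather than your $O(n^{9/2}\mu_k)$ via trace--Frobenius; neither change affects the conclusion, and your explicit bootstrapping for positive definiteness of $X_k(\alpha),Z_k(\alpha)$ fills in a point the paper leaves implicit.
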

\begin{proof}
\par From Lemma \ref{Lemma boundedness Dx Dz}, there exist constants $K_{\Delta} >0$ and $K_{H\Delta} > 0$, independent of $n$ and $m$, such that:
$$\langle \Delta X_k, \Delta Z_k \rangle = (D_k^{-T} \bm{\Delta X}_k)^\top  (D_k \bm{\Delta Z}_k) \leq \|D_k^{-T} \bm{\Delta X}_k\|_2 \|D_k \bm{\Delta Z}_k\|_2 \leq K_{\Delta}^2 n^4 \mu_k,$$
\[ \|H_{P_k}(\Delta X_k \Delta Z_k)\|_F \leq K_{H\Delta} n^4 \mu_k.\]
\noindent From the last block equation of the Newton system \eqref{exact non-vectorized Newton System}, we can show that:
\begin{equation} \label{Lemma step-length equation 1}
\langle Z_k, \Delta X_k\rangle + \langle X_k, \Delta Z_k\rangle = (\sigma_k - 1) \langle X_k, Z_k \rangle.
\end{equation}
\noindent The latter can also be obtained from \eqref{inexact vectorized Newton System}, since we require $\mathsf{E}_{\mu,k}= 0$. Furthermore:
\begin{equation} \label{Lemma step-length equation 2}
H_{P_k}(X_k(\alpha)Z_k(\alpha)) = (1-\alpha)H_{P_k}(X_k Z_k) + \alpha \sigma_k \mu_k I_n +\alpha^2 H_{P_k}(\Delta X_k \Delta Z_k),
\end{equation}
\noindent where $(X_{k+1},y_{k+1},Z_{k+1}) = (X_k + \alpha\Delta X_k,y_k + \alpha\Delta y_k, Z_k + \alpha\Delta Z_k)$.
\par We proceed by proving \eqref{Lemma step-length relation 1}. Using \eqref{Lemma step-length equation 1}, we have:
\begin{equation*}
\begin{split}
\langle X_k + \alpha \Delta X_k,Z_k + \alpha \Delta Z_k\rangle - (1-\alpha(1 -\beta_1))\langle X_k, Z_k\rangle = \\
\langle X_k, Z_k\rangle +\alpha (\sigma_k - 1)\langle X_k, Z_k \rangle +  \alpha^2 \langle \Delta X_k, \Delta Z_k \rangle - (1-\alpha)\langle X_k, Z_k \rangle -\alpha \beta_1 \langle X_k, Z_k \rangle \geq \\
\alpha (\sigma_k - \beta_1) \langle X_k, Z_k\rangle - \alpha^2 K_{\Delta}^2 n^4 \mu_k \geq \alpha (\frac{\sigma_{\min}}{2})n \mu_k - \alpha^2 K_{\Delta}^2 n^4 \mu_k,
\end{split}
\end{equation*}
\noindent where we set (without loss of generality) $\beta_1 = \frac{\sigma_{\min}}{2}$. The most-right hand side of the previous inequality will be non-negative for every $\alpha$ satisfying:
$$\alpha \leq \frac{\sigma_{\min}}{2 K_{\Delta}^2 n^3}.$$ 
\par In order to prove \eqref{Lemma step-length relation 2}, we will use \eqref{Lemma step-length equation 2} and the fact that from the neighbourhood conditions we have that $\|H_{P_k}(X_k Z_k) - \mu_k\|_F \leq \gamma_{\mu} \mu_k$. For that, we use the result in \cite[Lemma 4.2]{Zhang_SIAM_J_OPT}, stating that:
\[ \|H_{{P_k}(\alpha)}(X_k(\alpha) Z_k(\alpha)) - \mu_k(\alpha) I_n\|_F \leq \|H_{P_k}(X_k(\alpha) Z_k(\alpha)) - \mu_k(\alpha) I_n\|_F. \]
\noindent By combining all the previous, we have:
\begin{equation*}
\begin{split}
\|H_{{P_k}(\alpha)}(X_k(\alpha) Z_k(\alpha)) - \mu_k(\alpha) I_n\|_F - \gamma_{\mu}\mu_k(\alpha) &\ \leq \\
 \|H_{P_k}(X_k(\alpha) Z_k(\alpha)) - \mu_k(\alpha) I_n\|_F - \gamma_{\mu} \mu_k(\alpha) &\ = \\
 \|(1-\alpha)(H_{P_k}(X_k Z_k)-\mu_k I_n)  + \alpha^2 H_{P_k}(\Delta X_k, \Delta Z_k)  - \frac{\alpha^2}{n} \langle \Delta X_k, \Delta Z_k \rangle I_n\|_F - \gamma_{\mu} \mu_k(\alpha) &\ \leq \\
 (1-\alpha)\|H_{P_k}(X_kZ_k) - \mu_k I_n\|_F + \alpha^2\mu_k \bigg(\frac{K_{\Delta}^2}{n} + K_{H\Delta}\bigg)n^4 &\ \\ - \gamma_{\mu} \bigg((1-\alpha)\mu_k + \alpha\sigma_k \mu_k + \frac{\alpha^2}{n}\langle \Delta X_k, \Delta Z_k \rangle \bigg) &\ \leq\\
 -\gamma_{\mu} \alpha \sigma_{\min} \mu_k + \alpha^2 \mu_k \bigg(\frac{2K_{\Delta}^2}{n} + K_{H\Delta} \bigg)n^4,
\end{split}
\end{equation*}
\noindent where we used the neighbourhood conditions in \eqref{Small neighbourhood}, the equality $\mu_k(\alpha) = (1-\alpha)\mu_k + \alpha\sigma_k \mu_k + \frac{\alpha^2}{n} \langle\Delta X_k, \Delta Z_k \rangle$ (which can be derived from \eqref{Lemma step-length equation 1}), and the third block equation of the Newton system \eqref{inexact vectorized Newton System}. The most-right hand side of the previous is non-positive for every $\alpha$ satisfying:
$$\alpha \leq \frac{\sigma_{\min}\gamma_{\mu}}{\big(\frac{2K_{\Delta}^2}{n} + K_{H\Delta}\big)  n^4}.$$
\par Finally, to prove (\ref{Lemma step-length relation 3}), we set (without loss of generality) $\beta_2 = 0.99$. We know, from Algorithm \ref{Algorithm PMM-IPM}, that $\sigma_{\max} \leq 0.5$. With the previous two remarks in mind, we have:
\begin{equation*}
\begin{split}
\frac{1}{n}\langle X_k + \alpha \Delta X_k, Z_k + \alpha \Delta Z_k \rangle - (1-0.01\alpha)\mu_k \leq \\
(1-\alpha)\mu_k + \alpha \sigma_k \mu_k + \alpha^2 \frac{K_{\Delta}^2 n^4}{n}\mu_k - (1-0.01 \alpha)\mu_k \leq \\
-0.99\alpha \mu_k + 0.5\alpha \mu_k + \alpha^2 \frac{K_{\Delta}^2 n^4}{n} \mu_k =\\
-0.49\alpha \mu_k +\alpha^2\frac{K_{\Delta}^2 n^4}{n}\mu_k.
\end{split}
\end{equation*}
\noindent The last term will be non-positive for every $\alpha$ satisfying:
$$\alpha \leq \frac{0.49 }{K_{\Delta}^2 n^3}.$$
\par By combining all the previous bounds on the step-length, we have that \eqref{Lemma step-length relation 1}-\eqref{Lemma step-length relation 3} hold for every $\alpha \in (0,\alpha^*)$, where:
\begin{equation} \label{Lemma step-length bound on step-length}
\alpha^* \coloneqq \min\bigg\{ \frac{\sigma_{\min}}{2 K_{\Delta}^2 n^3},\ \frac{\sigma_{\min}\gamma_{\mu}}{\big(\frac{2K_{\Delta}^2}{n} + K_{H\Delta}\big)  n^4},\ \frac{0.49 }{K_{\Delta}^2 n^3},\ 1\bigg\}.
\end{equation}
\noindent Since ${\alpha^*} = \Omega\big(\frac{1}{n^{4}}\big)$, we know that there must exist a constant ${\kappa^*} > 0$, independent of $n$, $m$ and of the iteration $k$, such that ${\alpha^*} \geq \frac{\kappa}{n^{4}}$, for all $k \geq 0$, and this completes the proof.
\end{proof}
\begin{lemma} \label{Lemma step-length-part 2}
Given Assumptions  \textnormal{\ref{Assumption 1}}, \textnormal{\ref{Assumption 2}}, and by letting ${P_k}(\alpha) = Z_k(\alpha)^{\frac{1}{2}}$, there exists a step-length $\bar{\alpha} \geq \frac{\bar{\kappa}}{n^{4}} \in (0,1)$, where $\bar{\kappa} > 0$ is independent of $n$, $m$, such that for all $\alpha \in [0,\bar{\alpha}]$ and for all iterations $k \geq 0$ of Algorithm \textnormal{\ref{Algorithm PMM-IPM}}, if $(X_k,y_k,Z_k) \in \mathscr{N}_{\mu_k}(\Xi_k,\lambda_k)$, then letting:
$$(X_{k+1},y_{k+1},Z_{k+1}) = (X_k + \alpha\Delta X_k,y_k + \alpha\Delta y_k, Z_k + \alpha\Delta Z_k),\ \mu_{k+1} = \frac{\langle X_{k+1},Z_{k+1}\rangle}{n},$$
\noindent for any $\alpha \in (0,\bar{\alpha}]$, gives $(X_{k+1},y_{k+1},Z_{k+1}) \in \mathscr{N}_{\mu_{k+1}}(\Xi_{k+1},\lambda_{k+1})$, where $\Xi_k,$ and $\lambda_k$ are updated as in Algorithm \textnormal{\ref{Algorithm PMM-IPM}}.
\end{lemma}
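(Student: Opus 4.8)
The plan is to check, for $\alpha$ in a range $[0,\bar{\alpha}]$ with $\bar{\alpha}=\Omega(n^{-4})$, each of the three defining requirements of the neighbourhood $\mathscr{N}_{\mu_{k+1}}(\Xi_{k+1},\lambda_{k+1})$ in \eqref{Small neighbourhood}: membership in the enlarged set of centers (with the scaled infeasibilities obeying the $2$-norm bound $K_N$ and the semi-norm bound $\gamma_{\mathcal{S}}\rho$), strict positive definiteness of $X_{k+1},Z_{k+1}$, and the symmetrized complementarity bound. The last requirement is precisely \eqref{Lemma step-length relation 2}, already secured by Lemma \ref{Lemma step-length-part 1} for $\alpha\le\alpha^*$. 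Positive definiteness then follows for $\alpha\le\alpha^*$ by a standard continuity argument: since $P_k(\alpha)=Z_k(\alpha)^{\frac{1}{2}}$ makes $H_{P_k(\alpha)}(X_k(\alpha)Z_k(\alpha))=Z_k(\alpha)^{\frac{1}{2}}X_k(\alpha)Z_k(\alpha)^{\frac{1}{2}}$, relations \eqref{Lemma step-length relation 1}--\eqref{Lemma step-length relation 2} force the eigenvalues of this matrix to stay at least $(1-\gamma_{\mu})\mu_{k+1}>0$, so neither $X_k(\alpha)$ nor $Z_k(\alpha)$ can lose a positive eigenvalue while starting strictly feasible at $\alpha=0$.

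The bulk of the work is the set-of-centers membership, which I would split into the two update branches of Algorithm \ref{Algorithm PMM-IPM}. When the proximal estimates are refreshed, $(\Xi_{k+1},\lambda_{k+1})=(X_{k+1},y_{k+1})$, the two defining residuals collapse (the proximal terms vanish) to exactly $r_p$ and $-\bm{R}_d$ as defined in the algorithm, whence $(\tilde{b}_{k+1},\bm{\tilde{C}}_{k+1})=\frac{\mu_0}{\mu_{k+1}}(r_p,-\bm{R}_d)$, and the two inequalities that \emph{trigger} this branch give directly $\|(\tilde{b}_{k+1},\bm{\tilde{C}}_{k+1})\|_2\le K_N$ and $\|(\tilde{b}_{k+1},\bm{\tilde{C}}_{k+1})\|_{\mathcal{S}}\le\gamma_{\mathcal{S}}\rho$; no constraint on $\alpha$ beyond $\alpha^*$ is needed here.

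The substantive branch is the frozen one, $(\Xi_{k+1},\lambda_{k+1})=(\Xi_k,\lambda_k)$. I would substitute the second (resp.\ first) block of \eqref{inexact vectorized Newton System} together with the set-of-centers equalities satisfied by $(X_k,y_k,Z_k)\in\mathscr{N}_{\mu_k}(\Xi_k,\lambda_k)$ into the primal (resp.\ dual) residual of the new point, using the identity $\mu_{k+1}=(1-\alpha)\mu_k+\alpha\sigma_k\mu_k+\frac{\alpha^2}{n}\langle\Delta X_k,\Delta Z_k\rangle$, valid because $\bm{\mathsf{E}}_{\mu,k}=0$ forces \eqref{Lemma step-length equation 1}. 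The crucial cancellations are that the coefficient of $(y_k-\lambda_k)$ collapses to $\frac{\alpha^2}{n}\langle\Delta X_k,\Delta Z_k\rangle$ and that of $\bar{b}$ to $-\frac{\alpha^2}{n\mu_0}\langle\Delta X_k,\Delta Z_k\rangle$, yielding a recursion of the form
\begin{equation*}
(\tilde{b}_{k+1},\bm{\tilde{C}}_{k+1}) = \frac{(1-\alpha)\mu_k}{\mu_{k+1}}(\tilde{b}_k,\bm{\tilde{C}}_k) + \frac{\mu_0}{\mu_{k+1}}\Big(\text{deterministic }O(\alpha^2)\text{ terms}\Big) + \frac{\mu_0}{\mu_{k+1}}\alpha(\epsilon_{p,k},\bm{\mathsf{E}}_{d,k}).
\end{equation*}
I would bound the deterministic terms via Lemmas \ref{Lemma-boundedness of optimal solutions for sub-problems}, \ref{Lemma tilde point}, \ref{Lemma boundedness of x z} and \ref{Lemma boundedness Dx Dz} (using $\|y_k-\lambda_k\|_2,\|\bm{X}_k-\bm{\Xi}_k\|_2=O(n)$, $|\langle\Delta X_k,\Delta Z_k\rangle|=O(n^4\mu_k)$, $\|\bm{\Delta X}_k\|_2,\|\Delta y_k\|_2=O(n^3)$, $\|(\bar{b},\bm{\bar{C}})\|_2=O(\sqrt{n})$), so each is $O(\mu_0\alpha^2 n^4)\cdot\frac{\mu_k}{\mu_{k+1}}$, while the $O(\alpha)$ error terms are controlled by \eqref{Krylov method termination conditions}.

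Finally, combining $\|(\tilde{b}_k,\bm{\tilde{C}}_k)\|_2\le K_N$ (part of the hypothesis $(X_k,y_k,Z_k)\in\mathscr{N}_{\mu_k}(\Xi_k,\lambda_k)$) with the slack estimate $\mu_{k+1}-(1-\alpha)\mu_k\ge\frac{\alpha\sigma_{\min}}{2}\mu_k$ (which holds once $\alpha\le\sigma_{\min}/(2K_{\Delta}^2 n^3)$, so the negative part of the $\alpha^2$ complementarity term is dominated), I would show the extra terms fit inside the available slack $\big(1-\frac{(1-\alpha)\mu_k}{\mu_{k+1}}\big)K_N$ whenever $\alpha\le\bar{\kappa}_A/n^4$: the error part is absorbed since \eqref{Krylov method termination conditions} makes it at most half the slack, and the deterministic part forces the $O(n^{-4})$ threshold. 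The semi-norm bound follows identically, after noting that $\|\cdot\|_{\mathcal{S}}$ is subadditive, is dominated by a constant multiple of the $2$-norm on the deterministic terms, and has its own dedicated error budget in \eqref{Krylov method termination conditions}. Setting $\bar{\alpha}=\min\{\alpha^*,\bar{\kappa}_A/n^4\}=\Omega(n^{-4})$ covers both branches at once and gives $\bar{\alpha}\ge\bar{\kappa}/n^4$. The main obstacle is the bookkeeping in the frozen-estimate branch: tracking the exact cancellations so that the residual genuinely contracts by $\frac{(1-\alpha)\mu_k}{\mu_{k+1}}$ up to controllable $O(\alpha^2 n^4)$ perturbations, and ensuring these perturbations together with the inexactness errors never exhaust the slack.
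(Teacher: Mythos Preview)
Your proposal is correct and follows essentially the same approach as the paper: both derive the residual recursion (your displayed equation is exactly the paper's formula \eqref{primal infeasibility formula}--\eqref{dual infeasibility formula} after dividing by $\mu_{k+1}/\mu_0$), bound the deterministic $O(\alpha^2)$ terms via Lemmas \ref{Lemma-boundedness of optimal solutions for sub-problems}--\ref{Lemma boundedness Dx Dz} to get a quantity of order $n^4\mu_k$, absorb the inexactness via \eqref{Krylov method termination conditions}, and use the lower bound on $\mu_k(\alpha)$ from \eqref{Lemma step-length relation 1} to close the argument with $\bar{\alpha}=\Omega(n^{-4})$. Your treatment is slightly more explicit in two respects---you spell out the positive-definiteness continuity argument and separate the two update branches---whereas the paper handles both in a single sentence by observing that the algorithm's conditional for refreshing $(\Xi_k,\lambda_k)$ is precisely the neighbourhood condition with the new estimates; but the substance is identical.
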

\begin{proof}
\noindent Let $\alpha^*$ be given as in Lemma \ref{Lemma step-length-part 1} (i.e. in \eqref{Lemma step-length bound on step-length}) such that \eqref{Lemma step-length relation 1}--\eqref{Lemma step-length relation 3} are satisfied. We would like to find the maximum $\bar{\alpha} \in (0,\alpha^*)$, such that: 
$$(X_k(\alpha),y_k(\alpha),Z_k(\alpha)) \in \mathscr{N}_{\mu_k(\alpha)}(\Xi_k,\lambda_k),\ \text{for all}\ \alpha \in (0,\bar{\alpha}),$$
\noindent where $\mu_k(\alpha) = \frac{\langle X_k(\alpha), Z_k(\alpha) \rangle}{n}$. Let:
\begin{equation} \label{primal infeasibility vector}
\tilde{r}_p(\alpha) = A\bm{X}_k(\alpha)+ \mu_k(\alpha)(y_k(\alpha)-\lambda_k) - \big(b + \frac{\mu_k(\alpha)}{\mu_0}\bar{b}\big),
\end{equation} 
\noindent and
\begin{equation}\label{dual infeasibility matrix}
\bm{\tilde{R}}_d(\alpha) =  A^\top y_k(\alpha) + \bm{Z}_k(\alpha) - \mu_k(\alpha)(\bm{X}_k(\alpha)- \bm{\Xi}_k) - \big(\bm{C} + \frac{\mu_k(\alpha)}{\mu_0}\bm{\bar{C}}\big).
\end{equation}
\noindent In other words, we need to find the maximum $\bar{\alpha} \in (0,\alpha^*)$, such that:
\begin{equation} \label{Step-length neighbourhood conditions}
\|\tilde{r}_p(\alpha),\bm{\tilde{R}}_d(\alpha)\|_2 \leq K_N \frac{\mu_k(\alpha)}{\mu_0},\ \ \|\tilde{r}_p(\alpha),\bm{\tilde{R}}_d(\alpha)\|_{\mathcal{S}} \leq \gamma_{\mathcal{S}} \rho\frac{\mu_k(\alpha)}{\mu_0} ,\ \text{for all}\ \alpha \in (0,\bar{\alpha}).
\end{equation}
\noindent If the latter two conditions hold, then $(X_k(\alpha),y_k(\alpha),Z_k(\alpha)) \in \mathscr{N}_{\mu_k(\alpha)}(\Xi_k,\lambda_k),\ \text{for all}\ \alpha \in (0,\bar{\alpha})$. Then, if Algorithm \ref{Algorithm PMM-IPM} updates $\Xi_k$, and $\lambda_k$, it does so only when similar conditions (as in \eqref{Step-length neighbourhood conditions}) hold for the new parameters. Indeed, notice that the estimates $\Xi_k$ and $\lambda_k$ are only updated if the last conditional of Algorithm \ref{Algorithm PMM-IPM} is satisfied. But this is equivalent to saying that \eqref{Step-length neighbourhood conditions} is satisfied after setting $\bm{\Xi}_k = \bm{X}_k(\alpha)$ and $\lambda_k = y_k(\alpha)$. On the other hand, if the parameters are not updated, the new iterate lies in the desired neighbourhood because of \eqref{Step-length neighbourhood conditions}, alongside \eqref{Lemma step-length relation 1}--\eqref{Lemma step-length relation 3}.
\par We start by rearranging $\tilde{r}_p(\alpha)$. Specifically, we have that:\begin{equation*} 
\begin{split}
\tilde{r}_p(\alpha) &= \\ A(\bm{X}_k + \alpha \bm{\Delta X}_k) +\big(\mu_k + \alpha(\sigma_k-1)\mu_k +\frac{\alpha^2}{n}\langle\Delta X_k,\Delta Z_k\rangle\big)\big((y_k + \alpha \Delta y_k -\lambda_k)-\frac{\bar{b}}{\mu_0} \big) -b &=\\
  \big(A\bm{X}_k +\mu_k (y_k -\lambda_k)-b -\frac{\mu_k}{\mu_0}\bar{b}\big) + \alpha(A \bm{\Delta X}_k + \mu_k \Delta y_k) \\
 +\ \big(\alpha(\sigma_k-1)\mu_k + \frac{\alpha^2}{n}\langle\Delta X_k, \Delta Z_k\rangle \big)\big((y_k - \lambda_k + \alpha \Delta y_k) - \frac{\bar{b}}{\mu_0}\big)&=\\
  \frac{\mu_k}{\mu_0}\tilde{b}_k + \alpha\bigg(b- A\bm{X}_k - \sigma_k\mu_k\big((y_k-\lambda_k)-\frac{\bar{b}}{\mu_0} \big) + \epsilon_{p,k} + \mu_k \big((y_k-\lambda_k)-\frac{\bar{b}}{\mu_0} \big)\ \\
  -\ \mu_k  \big((y_k-\lambda_k)-\frac{\bar{b}}{\mu_0} \big) \bigg) + \big(\alpha(\sigma_k-1)\mu_k + \frac{\alpha^2}{n}\langle \Delta X_k, \Delta Z_k\rangle\big)\big((y_k - \lambda_k + \alpha \Delta y_k) - \frac{\bar{b}}{\mu_0}\big).
\end{split}
\end{equation*}
\noindent where we used the definition of $\tilde{b}_k$ in the neighbourhood conditions in \eqref{Small neighbourhood}, and the second block equation in \eqref{inexact vectorized Newton System}. By using again the neighbourhood conditions, and then by deleting the opposite terms in the previous equation, we obtain:\begin{equation}\label{primal infeasibility formula}
\begin{split}
\tilde{r}_p(\alpha) = &\ (1-\alpha)\frac{\mu_k}{\mu_0}\tilde{b}_k + \alpha \epsilon_{p,k} + \alpha^2(\sigma_k - 1)\mu_k \Delta y_k + \frac{\alpha^2}{n}\langle \Delta X_k, \Delta Z_k\rangle\big(y_k - \lambda_k + \alpha \Delta y_k - \frac{\bar{b}}{\mu_0} \big).
\end{split}
\end{equation}
\noindent Similarly, we can show that:
\begin{equation}\label{dual infeasibility formula}
\bm{\tilde{R}}_d(\alpha) = (1-\alpha)\frac{\mu_k}{\mu_0}\bm{\tilde{C}}_k + \alpha \bm{\mathsf{E}}_{d,k}- \alpha^2(\sigma_k-1)\mu_k \bm{\Delta X}_k - \frac{\alpha^2}{n}\langle \Delta X_k, \Delta Z_k \rangle \big(\bm{X}_k - \bm{\Xi}_k + \alpha \bm{\Delta X}_k + \frac{1}{\mu_0} \bm{\bar{C}}\big).
\end{equation}
\par Recall (Lemma \ref{Lemma boundedness Dx Dz}) that $\langle \Delta X_k, \Delta Z_k \rangle \leq K_{\Delta}^2 n^4 \mu_k$, and define the following quantities
\begin{equation} \label{step-length, auxiliary constants}
\begin{split}
\xi_2 = &\ \mu_k \|(\Delta y_k,\bm{\Delta X}_k)\|_2  +  K_{\Delta}^2n^3  \mu_{k}\bigg(\|(y_k - \lambda_k,\bm{X}_k-\bm{\Xi}_k)\|_2\ +\\
&\  \alpha^* \|(\Delta y_k,\bm{\Delta X}_k)\|_2 + \frac{1}{\mu_0}\|(\bar{b},\bm{\bar{C}})\|_2\bigg),\\
\xi_{\mathcal{S}} = &\ \mu_k \|(\Delta y_k,\bm{\Delta X}_k)\|_{\mathcal{S}} + K_{\Delta}^2n^3 \mu_{k}\bigg(\|(y_k - \lambda_k,\bm{X}_k-\bm{\Xi}_k)\|_{\mathcal{S}} \ +\\
&\  \alpha^* \|(\Delta y_k,\bm{\Delta X}_k)\|_{\mathcal{S}} +  \frac{1}{\mu_0}\|(\bar{b},\bm{\bar{C}})\|_{\mathcal{S}}\bigg),
\end{split}
\end{equation}
\noindent where $\alpha^*$ is given by \eqref{Lemma step-length bound on step-length}. Using the definition of the starting point in \eqref{starting point}, as well as results in Lemmas \ref{Lemma boundedness of x z}, \ref{Lemma boundedness Dx Dz}, we can observe that $\xi_2 = O(n^{4} \mu_k)$. On the other hand, using Assumption \ref{Assumption 2}, we know that for every pair $(r_1,\bm{R}_2) \in \mathbb{R}^{m+n^2}$ (where $R_2 \in \mathbb{R}^{n \times n}$ is an arbitrary matrix), if $\|(r_1,\bm{R}_2)\|_2 = \Theta(f(n))$, where $f(\cdot)$ is a positive polynomial function of $n$, then $\|(r_1,R_2)\|_{\mathcal{S}} = \Theta(f(n))$. Hence, we have that $\xi_{\mathcal{S}} = O(n^{4}\mu_k)$. Using the quantities in \eqref{step-length, auxiliary constants}, equations \eqref{primal infeasibility formula}, \eqref{dual infeasibility formula}, as well as the neighbourhood conditions, we have that:
\begin{equation*}
\begin{split}
\|\tilde{r}_p(\alpha),\bm{\tilde{R}}_d(\alpha)\|_2 \leq &\ (1-\alpha)K_N \frac{\mu_k}{\mu_0} + \alpha \mu_k \|(\epsilon_{p,k},\bm{\mathsf{E}}_{d,k})\|_2+ \alpha^2 \mu_k \xi_2,\\
\|\tilde{r}_p(\alpha),\bm{\tilde{R}}_d(\alpha)\|_S \leq &\  (1-\alpha)\gamma_{\mathcal{S}}\rho \frac{\mu_k}{\mu_0} + + \alpha \mu_k \|(\epsilon_{p,k},\bm{\mathsf{E}}_{d,k})\|_{\mathcal{S}} + \alpha^2 \mu_k \xi_{\mathcal{S}},
\end{split}
\end{equation*}
\noindent for all $\alpha \in (0,\alpha^*)$, where $\alpha^*$ is given by \eqref{Lemma step-length bound on step-length} and the error occurring from the inexact solution of \eqref{exact non-vectorized Newton System}, $(\epsilon_{p,k},\mathsf{E}_{d,k})$, satisfies \eqref{Krylov method termination conditions}. From \eqref{Lemma step-length relation 1}, we know that: 
$$\mu_k(\alpha) \geq (1-\alpha(1-\beta_1))\mu_k,\ \text{for all}\ \alpha \in (0,\alpha^*).$$
\noindent By combining the last three inequalities, using \eqref{Krylov method termination conditions} and setting $\beta_1 = \frac{\sigma_{\min}}{2}$, we obtain that:
\begin{equation*}
\begin{split}
\|\tilde{r}_p(\alpha),\bm{\tilde{R}}_d(\alpha)\|_2 \leq \frac{\mu_k(\alpha)}{\mu_0} K_N,\ \text{for all}\ \alpha \in \bigg(0, \min\big\{\alpha^*,\frac{\sigma_{\min} K_N}{4\xi_2 \mu_0}\big\}\bigg].
\end{split}
\end{equation*}
\noindent Similarly, 
\begin{equation*}
\begin{split}
\|\tilde{r}_p(\alpha),\bm{\tilde{R}}_d(\alpha)\|_{\mathcal{S}} \leq \frac{\mu_k(\alpha)}{\mu_0} \gamma_{\mathcal{S}} \rho,\ \text{for all}\ \alpha \in \bigg(0, \min \big\{\alpha^*,\frac{\sigma_{\min} \gamma_{\mathcal{S}} \rho}{4\xi_{\mathcal{S}} \mu_0}\big\}\bigg].
\end{split}
\end{equation*}
\par Hence, we have that:
\begin{equation} \label{Lemma step-length, STEPLENGTH BOUND}
\bar{\alpha} \coloneqq \min \bigg\{\alpha^*,\frac{\sigma_{\min} K_N}{4\xi_2 \mu_0}, \frac{\sigma_{\min} \gamma_{\mathcal{S}} \rho}{4\xi_{\mathcal{S}} \mu_0} \bigg\}.
\end{equation}
\noindent Since $\bar{\alpha} = \Omega\big(\frac{1}{n^{4}}\big)$, we know that there must exist a constant $\bar{\kappa} > 0$, independent of $n$, $m$ and of the iteration $k$, such that $\bar{\alpha} \geq \frac{\kappa}{n^{4}}$, for all $k \geq 0$, and this completes the proof. 
\end{proof}
\noindent The following theorem summarizes our results.
\begin{theorem} \label{Theorem mu convergence}
Given Assumptions \textnormal{\ref{Assumption 1}, \ref{Assumption 2}}, the sequence $\{\mu_k\}$ generated by Algorithm \textnormal{\ref{Algorithm PMM-IPM}} converges Q-linearly to zero, and the sequences of regularized residual norms 
$$\big\{\|A\bm{X}_k + \mu_k  (y_k-\lambda_k) - b-\frac{\mu_k}{\mu_0}\bar{b}\|_2\big\}\ \text{and}\  \big\{\| A^\top  y_k + \bm{Z}_k - \mu_k (\bm{X}_k - \bm{\Xi}_k) - \bm{C} - \frac{\mu_k}{\mu_0}\bm{\bar{C}}\|_2\big\}$$
converge R-linearly to zero.
\end{theorem}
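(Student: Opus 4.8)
The plan is to extract a uniform, polynomially-bounded lower bound on the accepted step-lengths $\alpha_k$ from Lemmas \ref{Lemma step-length-part 1}--\ref{Lemma step-length-part 2}, and then read off the geometric decay of $\mu_k$ directly from the acceptance rule of Algorithm \ref{Algorithm PMM-IPM}. First I would note that the decrease condition $\mu_k(\alpha) \leq (1-0.01\alpha)\mu_k$ built into the step-length selection is \emph{exactly} relation \eqref{Lemma step-length relation 3} of Lemma \ref{Lemma step-length-part 1}, upon recalling that there $\beta_2 = 0.99$, so that $1-\beta_2 = 0.01$. Hence this condition holds for every $\alpha \in (0,\alpha^*]$. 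Simultaneously, Lemma \ref{Lemma step-length-part 2} guarantees that the new point remains in $\mathscr{N}_{\mu_{k+1}}(\Xi_{k+1},\lambda_{k+1})$ for all $\alpha \in (0,\bar\alpha]$, with $\bar\alpha \leq \alpha^*$ by construction. Consequently, \emph{both} requirements imposed by the algorithm are met for every $\alpha \in (0,\bar\alpha]$. Since the algorithm always selects the largest admissible step-length, this forces $\alpha_k \geq \bar\alpha \geq \tfrac{\bar\kappa}{n^4}$ uniformly in $k$.

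Feeding this into the acceptance rule $\mu_{k+1} = \mu_k(\alpha_k) \leq (1-0.01\alpha_k)\mu_k$ and using $\alpha_k \geq \bar\alpha$, I obtain
\begin{equation*}
\mu_{k+1} \leq \Big(1 - 0.01\,\frac{\bar\kappa}{n^4}\Big)\mu_k.
\end{equation*}
Writing $\delta \coloneqq 0.01\,\bar\kappa/n^4$, one checks $\delta \in (0,1)$ (indeed $\bar\kappa/n^4 \leq \bar\alpha < 1$), so that $\mu_{k+1} \leq (1-\delta)\mu_k$ for all $k$. This is precisely $Q$-linear convergence of $\{\mu_k\}$ to zero, together with the explicit envelope $\mu_k \leq (1-\delta)^k\mu_0$.

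For the regularized residuals I would invoke membership of every iterate in $\tilde{\mathscr{C}}_{\mu_k}(\Xi_k,\lambda_k)$. The defining equations of that set show that the two quantities in the statement coincide with $\tfrac{\mu_k}{\mu_0}\tilde{b}_k$ and $\tfrac{\mu_k}{\mu_0}\bm{\tilde{C}}_k$ respectively; the neighbourhood bound $\|(\tilde{b}_k,\bm{\tilde{C}}_k)\|_2 \leq K_N$ then yields
\begin{equation*}
\Big\|A\bm{X}_k + \mu_k(y_k-\lambda_k) - b - \tfrac{\mu_k}{\mu_0}\bar{b}\Big\|_2 \leq K_N\frac{\mu_k}{\mu_0} \leq K_N(1-\delta)^k,
\end{equation*}
and identically for the dual residual. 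This gives $R$-linear convergence. The mild point to flag --- and the reason the residuals are only $R$-linear rather than $Q$-linear --- is that $\|(\tilde{b}_k,\bm{\tilde{C}}_k)\|_2$ need not be monotone, so no per-step ratio bound is available; the decay is inherited wholesale from that of $\mu_k$. The substantive work lies entirely in Lemmas \ref{Lemma boundedness Dx Dz}--\ref{Lemma step-length-part 2}, so the only real task at this stage is to assemble those bounds correctly.
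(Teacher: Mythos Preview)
Your proposal is correct and follows essentially the same route as the paper: extract the uniform lower bound $\alpha_k \geq \bar\alpha \geq \bar\kappa/n^4$ from Lemmas \ref{Lemma step-length-part 1}--\ref{Lemma step-length-part 2}, plug it into the decrease rule $\mu_{k+1} \leq (1-0.01\alpha_k)\mu_k$ to get $Q$-linear decay of $\mu_k$, and then read off the $R$-linear decay of the regularized residuals from the neighbourhood bound $\|(\tilde b_k,\bm{\tilde C}_k)\|_2 \leq K_N$. Your write-up is in fact more explicit than the paper's own proof (which is only a few lines), and your remark on why the residuals are only $R$-linear is a nice clarification.
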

\begin{proof}
\noindent From (\ref{Lemma step-length relation 3}) we have that:
$$ \mu_{k+1} \leq (1-0.01\alpha_k)\mu_k,$$
\noindent while, from (\ref{Lemma step-length, STEPLENGTH BOUND}), we know that $\forall\ k \geq 0$, $\exists\ \bar{\alpha} \geq \frac{\bar{\kappa}}{n^4}$ such that $\alpha_k \geq \bar{\alpha}$. Hence, we can easily see that $\mu_k \rightarrow 0$. On the other hand, from the neighbourhood conditions, we know that for all $k \geq 0$:
$$ \bigg\|A\bm{X}_k + \mu_k (y_k-\lambda_k) - b - \frac{\mu_k}{\mu_0}\bar{b}\bigg\|_2 \leq K_N \frac{\mu_k}{\mu_0}$$
\noindent and
$$\bigg\| A^\top  y_k + \bm{Z}_k - \mu_k (\bm{X}_k - \bm{\Xi}_k) - \bm{C}- \frac{\mu_k}{\mu_0}\bm{\bar{C}}\bigg\|_2 \leq K_N \frac{\mu_k}{\mu_0}.$$
\noindent This completes the proof.  
\end{proof}
\par The polynomial complexity of Algorithm \ref{Algorithm PMM-IPM} is established in the following theorem.
\begin{theorem} \label{Theorem complexity}
\noindent Let $\varepsilon \in (0,1)$ be a given error tolerance. Choose a starting point for Algorithm \textnormal{\ref{Algorithm PMM-IPM}} as in \eqref{starting point}, such that $\mu_0 \leq \frac{K}{\varepsilon^{\omega}}$ for some positive constants $K,\ \omega$. Given Assumptions \textnormal{\ref{Assumption 1}} and \textnormal{\ref{Assumption 2}}, there exists an index $k_0 \geq 0$ with:
$$k_0 = O\bigg(n^{4}\big|\log \frac{1}{\varepsilon}\big|\bigg)$$
\noindent such that the iterates $\{(X_k,y_k,Z_k)\}$ generated from Algorithm \textnormal{\ref{Algorithm PMM-IPM}} satisfy:
$$\mu_k \leq \varepsilon,\ \ \ \ \text{for all}\ k\geq k_0.$$  
\end{theorem}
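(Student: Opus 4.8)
The plan is to convert the $Q$-linear decrease of $\{\mu_k\}$ established in Theorem \ref{Theorem mu convergence} into an explicit iteration bound via a standard logarithmic estimate. First I would record that combining \eqref{Lemma step-length relation 3} of Lemma \ref{Lemma step-length-part 1} with the uniform step-length lower bound $\alpha_k \geq \bar{\alpha} \geq \frac{\bar{\kappa}}{n^4}$ from Lemma \ref{Lemma step-length-part 2} gives, for every $k \geq 0$,
$$
\mu_{k+1} \leq (1-0.01\alpha_k)\mu_k \leq \Big(1 - \tfrac{0.01\bar{\kappa}}{n^4}\Big)\mu_k .
$$
Since the contraction factor is uniform in $k$ (the constant $\bar{\kappa}$ being independent of $n$, $m$, and the iteration index), a trivial induction yields $\mu_k \leq \big(1 - \tfrac{0.01\bar{\kappa}}{n^4}\big)^k \mu_0$ for all $k \geq 0$.

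Next I would set $\delta := \tfrac{0.01\bar{\kappa}}{n^4} \in (0,1)$ and use the elementary inequality $1-\delta \leq e^{-\delta}$ to obtain $\mu_k \leq e^{-\delta k}\mu_0$. Solving $e^{-\delta k}\mu_0 \leq \varepsilon$ for $k$ reduces to requiring $\delta k \geq \log(\mu_0/\varepsilon)$, so it is enough to take
$$
k_0 = \Big\lceil \frac{n^4}{0.01\bar{\kappa}}\,\log\frac{\mu_0}{\varepsilon}\Big\rceil ,
$$
for then $\mu_{k_0} \leq e^{-\delta k_0}\mu_0 \leq \varepsilon$ (the case $\mu_0 < \varepsilon$ being trivial, with $k_0 = 0$).

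It then remains to absorb $\mu_0$ into the $O(\cdot)$ estimate. Using the hypothesis $\mu_0 \leq K/\varepsilon^{\omega}$ together with $\varepsilon \in (0,1)$, I would bound
$$
\log\frac{\mu_0}{\varepsilon} \leq \log K + (\omega+1)\log\frac{1}{\varepsilon} = O\Big(\big|\log\tfrac{1}{\varepsilon}\big|\Big),
$$
whence $k_0 = O\big(n^4\,\big|\log\tfrac{1}{\varepsilon}\big|\big)$. Finally, because the recursion shows $\mu_{k+1} \leq \mu_k$ (the factor being strictly below one), the sequence $\{\mu_k\}$ is non-increasing, so $\mu_{k_0} \leq \varepsilon$ immediately gives $\mu_k \leq \varepsilon$ for all $k \geq k_0$, completing the argument.

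The argument is entirely standard and there is no genuine analytical obstacle beyond bookkeeping. The only points requiring mild care are (i) ensuring the contraction constant $\bar{\kappa}$ is genuinely uniform across all iterations, which is precisely what Lemma \ref{Lemma step-length-part 2} supplies, and (ii) the logarithmic substitution of the bound on $\mu_0$, where the assumption $\mu_0 \leq K/\varepsilon^{\omega}$ is exactly what guarantees that $\log\mu_0$ contributes only an $O\big(\big|\log\tfrac{1}{\varepsilon}\big|\big)$ term and does not spoil the polynomial dependence on $1/\varepsilon$.
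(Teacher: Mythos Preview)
Your proposal is correct and follows essentially the same route as the paper's (referenced) proof: combine the per-step decrease $\mu_{k+1}\leq(1-0.01\alpha_k)\mu_k$ with the uniform lower bound $\alpha_k\geq \bar{\kappa}/n^4$, iterate, apply the inequality $1-\delta\leq e^{-\delta}$ (equivalently $\log(1+\beta)\leq\beta$), and then use $\mu_0\leq K/\varepsilon^{\omega}$ to absorb $\log\mu_0$ into the $O(|\log(1/\varepsilon)|)$ term. There is no substantive difference.
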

\begin{proof}
The proof can be found in \cite[Theorem 3.8]{Pougk_Gond_COAP}.
\end{proof}
\par Finally, we present the global convergence guarantee of Algorithm \ref{Algorithm PMM-IPM}.
\begin{theorem} \label{Theorem convergence for the feasible case}
Suppose that Algorithm \textnormal{\ref{Algorithm PMM-IPM}} terminates when a limit point is reached. Then, if Assumptions \textnormal{\ref{Assumption 1}} and \textnormal{\ref{Assumption 2}} hold, every limit point of $\{(X_k,y_k,Z_k)\}$ determines a primal-dual solution of the non-regularized pair \textnormal{(\ref{non-regularized primal})--(\ref{non-regularized dual})}.
\end{theorem}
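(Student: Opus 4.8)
The plan is to take any limit point $(X^*,y^*,Z^*)$ of the sequence $\{(X_k,y_k,Z_k)\}$ and verify directly that it satisfies the KKT conditions \eqref{non-regularized F.O.C} for the pair \eqref{non-regularized primal}--\eqref{non-regularized dual}. Existence of such a limit point is not an issue: by Lemma \ref{Lemma boundedness of x z} the iterates remain bounded, so (for fixed $n$) the Bolzano--Weierstrass theorem yields a convergent subsequence $\{(X_{k_j},y_{k_j},Z_{k_j})\} \to (X^*,y^*,Z^*)$. Since $X_{k_j},Z_{k_j}\in\mathcal{S}^n_{++}$ for all $j$ and $\mathcal{S}^n_+$ is closed, the limit already satisfies $X^*,Z^*\in\mathcal{S}^n_+$.

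First I would pass to the limit in the regularized feasibility relations defining the neighbourhood $\mathscr{N}_{\mu_k}(\Xi_k,\lambda_k)$. The crucial ingredients are Theorem \ref{Theorem mu convergence}, which gives $\mu_k\to 0$, together with the boundedness of the iterates (Lemma \ref{Lemma boundedness of x z}) and of the proximal estimates $(\Xi_k,\lambda_k)$ (Lemma \ref{Lemma-boundedness of optimal solutions for sub-problems}). Along the chosen subsequence, the primal neighbourhood equation reads $A\bm{X}_{k_j} + \mu_{k_j}(y_{k_j}-\lambda_{k_j}) = b + \frac{\mu_{k_j}}{\mu_0}(\bar{b}+\tilde{b}_{k_j})$. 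The proximal contribution $\mu_{k_j}(y_{k_j}-\lambda_{k_j})$ vanishes in the limit, being a bounded quantity multiplied by $\mu_{k_j}\to 0$; likewise the scaled-infeasibility term vanishes, since $\bar{b}$ is fixed and $\|(\tilde{b}_{k_j},\bm{\tilde{C}}_{k_j})\|_2\leq K_N$ by the definition of $\tilde{\mathscr{C}}_{\mu_k}(\Xi_k,\lambda_k)$. Hence $\mathcal{A}X^* = b$. The identical argument applied to the dual neighbourhood equation, using $\|\bm{\tilde{C}}_{k_j}\|_2\leq K_N$ and the boundedness of $\bm{X}_{k_j}-\bm{\Xi}_{k_j}$, yields $\mathcal{A}^*y^* + Z^* = C$.

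It then remains to establish complementarity. By definition $\langle X_{k_j},Z_{k_j}\rangle = n\mu_{k_j}$, and by continuity of the inner product this gives $\langle X^*,Z^*\rangle = \lim_j n\mu_{k_j} = 0$. Since $X^*,Z^*\in\mathcal{S}^n_+$, the standard fact that $\textnormal{Tr}(X^*Z^*)=0$ together with positive semidefiniteness forces $X^*Z^* = 0$: writing $\textnormal{Tr}(X^*Z^*)=\textnormal{Tr}\big((Z^*)^{1/2}X^*(Z^*)^{1/2}\big)$, the argument is the trace of a positive semidefinite matrix, which is zero only if that matrix itself is zero, whence $X^*Z^*=0$. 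Combining the three blocks with $X^*,Z^*\succeq 0$ shows that $(X^*,y^*,Z^*)$ solves \eqref{non-regularized F.O.C}, and is therefore a primal-dual optimal solution.

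I expect no serious analytic obstacle, as the heavy lifting is already carried out by the earlier uniform bounds; the only point requiring genuine care is that the proximal estimates $(\Xi_k,\lambda_k)$ need not converge---they may remain frozen over arbitrarily many iterations---so the argument must rely solely on their boundedness (guaranteed by Lemma \ref{Lemma-boundedness of optimal solutions for sub-problems}) rather than on any limiting value. This is precisely what ensures that every regularization and infeasibility contribution is annihilated by the factor $\mu_k\to 0$, and is the reason the scheme need only be analyzed as a standard IPM rather than as an inner-outer method.
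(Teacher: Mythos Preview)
Your proposal is correct and follows essentially the same approach as the paper's own proof: both pass to a convergent subsequence using the boundedness from Lemma~\ref{Lemma boundedness of x z}, invoke $\mu_k\to 0$ from Theorem~\ref{Theorem mu convergence}, and then observe that the regularization and scaled-infeasibility terms in the neighbourhood equations vanish in the limit, leaving exactly the KKT conditions~\eqref{non-regularized F.O.C}. Your write-up is in fact slightly more careful than the paper's (you explicitly cite the boundedness of $(\Xi_k,\lambda_k)$ from Lemma~\ref{Lemma-boundedness of optimal solutions for sub-problems} and spell out why $\langle X^*,Z^*\rangle=0$ forces $X^*Z^*=0$), but the underlying argument is the same.
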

\begin{proof}
\noindent From Theorem \ref{Theorem mu convergence}, we know that $\{\mu_k\} \rightarrow 0$, and hence, there exists a sub-sequence $\mathcal{K} \subseteq \mathbb{N}$, such that:
$$\{ A\bm{X}_k + \mu_k (y_k - \lambda_k) -b -\frac{\mu_k}{\mu_0}\bar{b}\}_{\mathcal{K}} \rightarrow 0,\ \{ A^\top  y_k + \bm{Z}_k - \mu_k (\bm{X}_k - \bm{\Xi}_k)-\bm{C}-\frac{\mu_k}{\mu_0}\bm{\bar{C}}\}_{\mathcal{K}} \rightarrow 0.$$
\noindent However, since Assumptions \ref{Assumption 1} and \ref{Assumption 2} hold, we know from Lemma \ref{Lemma boundedness of x z} that $\{(X_k,y_k,Z_k)\}$ is a bounded sequence. Hence, we obtain that:
$$\{ A\bm{X}_k - b\}_{\mathcal{K}} \rightarrow 0,\ \{A^\top y_k +\bm{Z}_k -\bm{C}\}_{\mathcal{K}} \rightarrow 0.$$
\noindent One can readily observe that the limit point of the algorithm satisfies the optimality conditions of \eqref{non-regularized primal}--\eqref{non-regularized dual}, since $\langle X_k, Z_k \rangle \rightarrow 0$ and $X_k,\ Z_k \in \mathcal{S}^n_+$.  
\end{proof}

\begin{remark}
As mentioned at the end of Section \textnormal{\ref{section Algorithmic Framework}}, we do not study the conditions under which one can guarantee that $X_k - \Xi_k \rightarrow 0$ and $y_k - \lambda_k \rightarrow 0$, although this could be possible. This is because the method is shown to converge globally even if this is not the case. Indeed, notice that if one were to choose $X_0 = 0$ and $\lambda_0 = 0$, and simply ignore the last conditional statement of Algorithm \textnormal{\ref{Algorithm PMM-IPM}}, the convergence analysis established in this section would still hold. In this case, the method would be interpreted as an interior point-quadratic penalty method, and we could consider the regularization as a diminishing primal-dual Tikhonov regularizer (i.e. a variant of the regularization proposed in \textnormal{\cite{SaundersTomlin_Tech_Rep}}).
\end{remark}

\section{A Sufficient Condition for Strong Duality} \label{section Infeasible problems} 

\par We now drop Assumptions \ref{Assumption 1}, \ref{Assumption 2}, in order to analyze the behaviour of the algorithm when solving problems that are strongly (or weakly) infeasible, problems for which strong duality does not hold (weakly feasible), or problems for which the primal or the dual solution is not attained. For a formal definition and a comprehensive study of the previous types of problems we refer the reader to \cite{Liu_Pataki_MATH_PROG}, and the references therein. Below we provide a well-known result, stating that strong duality holds if and only if there exists a KKT point. 
\begin{Proposition} \label{Prop. KKT and strong duality}
Let \textnormal{\eqref{non-regularized primal}--\eqref{non-regularized dual}} be given. Then, $\textnormal{val\eqref{non-regularized primal}} \geq \textnormal{val\eqref{non-regularized dual}}$, where $\textnormal{val}(\cdot)$ denotes the optimal objective value of a problem. Moreover, $\textnormal{val\eqref{non-regularized primal}} = \textnormal{val\eqref{non-regularized dual}}$ and $(X^*,y^*,Z^*)$ is an optimal solution for \textnormal{\eqref{non-regularized primal}--\eqref{non-regularized dual}}, if and only if  $(X^*,y^*,Z^*)$  satisfies the (KKT) optimality conditions in \eqref{non-regularized F.O.C}.
\end{Proposition}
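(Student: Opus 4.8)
The plan is to derive everything from a single identity relating the primal and dual objectives across arbitrary feasible pairs. Let $X$ be any primal feasible point (so $\mathcal{A}X = b$ and $X \in \mathcal{S}^n_+$) and let $(y,Z)$ be any dual feasible point (so $\mathcal{A}^* y + Z = C$ and $Z \in \mathcal{S}^n_+$). Substituting $C = \mathcal{A}^* y + Z$, and using the adjoint relation $\langle \mathcal{A}^* y, X\rangle = y^\top \mathcal{A}X = y^\top b$ together with primal feasibility, I would compute
\begin{equation*}
\langle C, X\rangle - b^\top y = \langle \mathcal{A}^* y + Z, X\rangle - b^\top y = y^\top b + \langle Z, X\rangle - b^\top y = \langle Z, X\rangle.
\end{equation*}
Since $X, Z \in \mathcal{S}^n_+$, we have $\langle Z, X\rangle = \textnormal{Tr}(ZX) \geq 0$, hence $\langle C, X\rangle \geq b^\top y$ for every feasible pair. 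Taking the infimum over primal feasible $X$ and the supremum over dual feasible $(y,Z)$ yields $\textnormal{val\eqref{non-regularized primal}} \geq \textnormal{val\eqref{non-regularized dual}}$, establishing the first (\emph{weak duality}) assertion.

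For the equivalence, the same identity does most of the work. Assuming the KKT conditions \eqref{non-regularized F.O.C}, the first two block equations state precisely that $X^*$ is primal feasible and $(y^*, Z^*)$ is dual feasible, while the complementarity $X^* Z^* = 0$ gives $\langle X^*, Z^*\rangle = \textnormal{Tr}(X^* Z^*) = 0$; the identity then forces $\langle C, X^*\rangle = b^\top y^*$. Combined with weak duality, the chain $b^\top y^* \leq \textnormal{val\eqref{non-regularized dual}} \leq \textnormal{val\eqref{non-regularized primal}} \leq \langle C, X^*\rangle = b^\top y^*$ collapses to equalities, so $X^*$ is primal optimal, $(y^*, Z^*)$ is dual optimal, and strong duality holds. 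Conversely, if $(X^*, y^*, Z^*)$ is optimal with equal optimal values, then feasibility supplies the first two KKT equations, and equality of the objectives together with the identity forces $\langle X^*, Z^*\rangle = 0$; it remains only to upgrade this scalar equation to the matrix identity $X^* Z^* = 0$.

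The main (and only genuinely nontrivial) step is this last upgrade, namely the elementary lemma that for $X, Z \in \mathcal{S}^n_+$ one has $\textnormal{Tr}(XZ) = 0$ if and only if $XZ = 0$. I would prove it by writing $\textnormal{Tr}(XZ) = \textnormal{Tr}(X^{1/2} Z X^{1/2})$, noting that $X^{1/2} Z X^{1/2} \succeq 0$ has zero trace and is therefore the zero matrix, and then concluding $Z^{1/2} X^{1/2} = 0$ since $X^{1/2} Z X^{1/2} = (Z^{1/2} X^{1/2})^\top (Z^{1/2} X^{1/2})$ is its Gram matrix; hence $ZX = 0$ and $XZ = (ZX)^\top = 0$ by symmetry of $X$ and $Z$. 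I would emphasize that this proposition is purely a statement of weak duality plus complementary slackness and so requires no constraint qualification: Slater's condition (Assumption \ref{Assumption 1}) is needed only to guarantee that a KKT point \emph{exists}, not for the stated equivalence, which is why the result remains meaningful in Section \ref{section Infeasible problems} after the standing assumptions are dropped.
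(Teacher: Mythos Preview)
Your proof is correct and complete. The paper does not actually prove this proposition; it simply states that ``this is a well-known fact'' and refers the reader to \cite[Proposition 2.1]{ShapSchein_BOOK_SPRINGER}. Your argument supplies exactly the standard derivation that reference would contain: the duality-gap identity $\langle C,X\rangle - b^\top y = \langle Z,X\rangle$, followed by the complementary-slackness upgrade $\textnormal{Tr}(XZ)=0 \Rightarrow XZ=0$ via the Gram-matrix trick. Your closing remark that no constraint qualification is needed here (only for \emph{existence} of a KKT point) is also on point and aligns with how the proposition is used in Section~\ref{section Infeasible problems}.
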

\begin{proof}
This is a well-known fact, the proof of which can be found in \cite[Proposition 2.1]{ShapSchein_BOOK_SPRINGER}. 
\end{proof}
  Let us employ the following two premises:
\begin{premise} \label{Premise 1}
During the iterations of Algorithm \textnormal{\ref{Algorithm PMM-IPM}}, the sequences $\{\|y_k - \lambda_k\|_2\}$ and $\{\|X_k - \Xi_k\|_F\}$, remain bounded.
\end{premise}

\begin{premise} \label{Premise 2}
There does not exist a primal-dual triple, satisfying the KKT conditions in \eqref{non-regularized F.O.C} associated with the primal-dual pair \textnormal{(\ref{non-regularized primal})--(\ref{non-regularized dual})}.
\end{premise}
\noindent The following analysis extends the result presented in \cite[Section 4]{Pougk_Gond_COAP}, and is based on the developments in \cite[Sections 10 \& 11]{Dehg_Goff_Orban_OMS}. In what follows, we show that Premises \ref{Premise 1} and \ref{Premise 2} are contradictory. In other words, if Premise \ref{Premise 2} holds (which means that strong duality does not hold for the problem under consideration), then Premise \ref{Premise 1} cannot hold, and hence Premise \ref{Premise 1} is a sufficient condition for strong duality (and its negation is a necessary condition for Premise \ref{Premise 2}). We show that if Premise \ref{Premise 1} holds, then the algorithm converges to an optimal solution. If not, however, it does not necessarily mean that the problem under consideration is infeasible. For example, this could happen if either \eqref{non-regularized primal} or \eqref{non-regularized dual} is strongly infeasible, weakly infeasible, and in some cases even if either of the problems is weakly feasible (e.g. see \cite{Liu_Pataki_MATH_PROG,ShapSchein_BOOK_SPRINGER}). As we discuss later, the knowledge that Premise \ref{Premise 1} does not hold could be useful in detecting pathological problems.

\begin{lemma} \label{Lemma infeasibility bounded Newton}
Given Premise \textnormal{\ref{Premise 1}}, and by assuming that $\langle X_k, Z_k \rangle > \varepsilon$, for some $\varepsilon >0$, for all iterations $k$ of Algorithm \textnormal{\ref{Algorithm PMM-IPM}}, the Newton direction produced by \textnormal{(\ref{inexact vectorized Newton System})} is uniformly bounded by a constant dependent only on $n$ and/or $m$. 
\end{lemma}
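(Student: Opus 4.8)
The plan is to bypass any attempt to bound the iterates $(X_k,Z_k)$ individually --- which is hopeless once Assumptions \ref{Assumption 1}--\ref{Assumption 2} are dropped --- and instead to exploit the coercivity injected by the primal-dual regularization blocks $-\mu_k I_{n^2}$ and $\mu_k I_m$ of \eqref{inexact vectorized Newton System}. The hypothesis $\langle X_k,Z_k\rangle>\varepsilon$ reads $\mu_k=\langle X_k,Z_k\rangle/n>\varepsilon/n$, so the regularization modulus stays uniformly positive; together with the trivial upper bound $\mu_k\le\mu_0$ (enforced by the step-length rule of Algorithm \ref{Algorithm PMM-IPM}, which requires $\mu_{k+1}\le(1-0.01\alpha_k)\mu_k$, independently of Assumptions \ref{Assumption 1}--\ref{Assumption 2}) this confines $\mu_k$ to the fixed interval $(\varepsilon/n,\mu_0]$.

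First I would establish that the right-hand side of \eqref{inexact vectorized Newton System} is bounded uniformly in $k$. Denoting its three blocks by $\bm r_{d,k},\,r_{p,k},\,\bm R_{\mu,k}$ and using the membership $(X_k,y_k,Z_k)\in\mathscr{N}_{\mu_k}(\Xi_k,\lambda_k)$ to eliminate $\bm C$ and $b$ through the defining identities of $\tilde{\mathscr C}_{\mu_k}(\Xi_k,\lambda_k)$, the first two blocks collapse to combinations of $(\bm X_k-\bm\Xi_k)$, $(y_k-\lambda_k)$, the scaled infeasibilities $(\tilde b_k,\bm{\tilde C}_k)$, the fixed data $(\bar b,\bm{\bar C})$, and the inexactness errors $(\epsilon_{p,k},\bm{\mathsf E}_{d,k})$. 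Crucially only the \emph{differences} $X_k-\Xi_k$ and $y_k-\lambda_k$ survive, and these are exactly what Premise \ref{Premise 1} keeps bounded; the neighbourhood caps $\|(\tilde b_k,\bm{\tilde C}_k)\|_2\le K_N$, the errors obey \eqref{Krylov method termination conditions}, and $\mu_k\le\mu_0$, so $r:=(\|\bm r_{d,k}\|_2^2+\|r_{p,k}\|_2^2)^{1/2}$ is bounded uniformly in $k$. For the third block I would invoke Lemma \ref{Auxiliary Lemma scaled rhs of third block of Newton system}, whose bound $\|S_k^{-1/2}\bm R_{\mu,k}\|_2^2=O(n\mu_k)$ rests solely on the neighbourhood condition $\|H_{P_k}(X_kZ_k)-\mu_k I_n\|_F\le\gamma_\mu\mu_k$ and hence remains valid; with $\mu_k\le\mu_0$ it gives $\rho_\mu:=\|S_k^{-1/2}\bm R_{\mu,k}\|_2=O(\sqrt{n\mu_0})$.

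The core step is a coercivity estimate. Taking the inner product of the first block equation of \eqref{inexact vectorized Newton System} with $\bm{\Delta X}_k$ and of the second with $\Delta y_k$, and eliminating the common term $\Delta y_k^\top A\bm{\Delta X}_k$, I obtain
\begin{equation*}
\langle\Delta X_k,\Delta Z_k\rangle=\mu_k\big(\|\bm{\Delta X}_k\|_2^2+\|\Delta y_k\|_2^2\big)+\langle\bm r_{d,k},\bm{\Delta X}_k\rangle-\Delta y_k^\top r_{p,k}.
\end{equation*}
Meanwhile Lemma \ref{Auxiliary Lemma scaled third block of Newton system} yields $\|D_k^{-T}\bm{\Delta X}_k\|_2^2+\|D_k\bm{\Delta Z}_k\|_2^2+2\langle\Delta X_k,\Delta Z_k\rangle=\|S_k^{-1/2}\bm R_{\mu,k}\|_2^2$, so dropping the two nonnegative scaled norms gives $2\langle\Delta X_k,\Delta Z_k\rangle\le\rho_\mu^2$. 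Substituting the identity, bounding the linear terms by Cauchy--Schwarz, and writing $P:=(\|\bm{\Delta X}_k\|_2^2+\|\Delta y_k\|_2^2)^{1/2}$, I arrive at the scalar inequality $2\mu_k P^2-2rP-\rho_\mu^2\le0$. As $\mu_k\ge\varepsilon/n>0$ while $r$ and $\rho_\mu$ are uniformly bounded, this forces $P$ to be bounded by a constant depending only on $n$, $m$ (and the fixed data and Premise-\ref{Premise 1} bounds). Finally, solving the first block equation for $\bm{\Delta Z}_k=\bm r_{d,k}+\mu_k\bm{\Delta X}_k-A^\top\Delta y_k$ bounds $\bm{\Delta Z}_k$ from already-controlled quantities, yielding the uniform bound on the whole Newton direction.

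I expect the main obstacle to be conceptual rather than computational. With Assumptions \ref{Assumption 1}--\ref{Assumption 2} gone, neither an optimal solution nor the bounded auxiliary triples of Lemmas \ref{Lemma-boundedness of optimal solutions for sub-problems} and \ref{Lemma tilde point} exist, so the derivation of Lemma \ref{Lemma boundedness Dx Dz} --- which relied on bounded iterates --- is unavailable, and indeed $(X_k,Z_k)$ need not be bounded even though their product stays well conditioned inside $\mathscr{N}_{\mu_k}$. The resolution is precisely the regularization: the $\mu_k I$ blocks turn the reduced quadratic form into a strictly positive definite one with modulus $\mu_k$, and the standing hypothesis $\langle X_k,Z_k\rangle>\varepsilon$ keeps this modulus away from zero. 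The delicate point to verify carefully is that every quantity entering $r$ and $\rho_\mu$ depends on the iterates only through the differences controlled by Premise \ref{Premise 1} and through neighbourhood-only quantities, so that no hidden dependence on the (possibly unbounded) $\Xi_k$, $\lambda_k$, $X_k$, or $Z_k$ creeps in.
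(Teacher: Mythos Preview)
Your argument is correct and rests on precisely the mechanism the paper singles out: the regularization blocks $-\mu_k I_{n^2}$ and $\mu_k I_m$ have modulus bounded below by $\varepsilon/n$, which is what forces the direction to stay bounded once the right-hand side is controlled through Premise~\ref{Premise 1} and the neighbourhood conditions. The paper's own proof is simply a pointer to \cite[Lemma~10.1]{Dehg_Goff_Orban_OMS} together with the remark that $\mu_k\ge\varepsilon/n$, so your write-up is a faithful expansion of that sketch rather than a different route; the one stylistic difference is that you extract the bound via the coercivity identity $\langle\Delta X_k,\Delta Z_k\rangle=\mu_k(\|\bm{\Delta X}_k\|_2^2+\|\Delta y_k\|_2^2)+\langle\bm r_{d,k},\bm{\Delta X}_k\rangle-\Delta y_k^\top r_{p,k}$ combined with Lemma~\ref{Auxiliary Lemma scaled third block of Newton system}, rather than arguing abstractly that the regularized Jacobian has a uniformly bounded inverse---your version is more explicit and avoids any appeal to the (possibly unbounded) individual iterates, which is exactly the right emphasis here.
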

\begin{proof}
\par The proof is omitted since it follows exactly the developments in \cite[Lemma 10.1]{Dehg_Goff_Orban_OMS}. We notice that the regularization terms (blocks (1,1) and (2,2) in the Jacobian  matrix in \eqref{inexact vectorized Newton System}) depend on $\mu_k$ which by assumption is always bounded away 
from zero: $\mu_k \geq \frac{\epsilon}{n}$. 

 \end{proof}
 
\par In the following Lemma, we prove by contradiction that the parameter $\mu_k$ of Algorithm \ref{Algorithm PMM-IPM} converges to zero, given that Premise \ref{Premise 1} holds. 

\begin{lemma} \label{Lemma infeasibility mu to zero}
Given Premise \textnormal{\ref{Premise 1}}, and a sequence $(X_k,y_k,Z_k) \in \mathcal{N}_{\mu_k}(\Xi_k,\lambda_k)$ produced by Algorithm \textnormal{\ref{Algorithm PMM-IPM}}, the sequence $\{\mu_k\}$ converges to zero.
\end{lemma}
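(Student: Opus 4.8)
The plan is to argue by contradiction, exploiting the monotone decrease of $\mu_k$ together with Lemma \ref{Lemma infeasibility bounded Newton}. First I would record that $\{\mu_k\}$ is non-increasing and bounded below: by the acceptance rule of Algorithm \ref{Algorithm PMM-IPM} every iterate satisfies $\mu_{k+1} = \mu_k(\alpha_k) \leq (1-0.01\alpha_k)\mu_k \leq \mu_k$ with $\alpha_k \in (0,1]$, while $\mu_k = \langle X_k,Z_k\rangle/n \geq 0$ because $X_k,Z_k \in \mathcal{S}^n_{++}$. Hence $\{\mu_k\}$ converges to some limit $\mu^* \geq 0$, and it suffices to exclude $\mu^* > 0$.

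So I would assume, for contradiction, that $\mu^* > 0$. Then $\langle X_k,Z_k\rangle = n\mu_k \geq n\mu^* =: \varepsilon > 0$ for all $k$, which is precisely the standing hypothesis of Lemma \ref{Lemma infeasibility bounded Newton} under Premise \ref{Premise 1}. Consequently the Newton directions $(\Delta X_k,\Delta y_k,\Delta Z_k)$ are uniformly bounded by a constant depending only on $n$ and $m$; in particular $\langle \Delta X_k,\Delta Z_k\rangle$ and $\|H_{P_k}(\Delta X_k \Delta Z_k)\|_F$ are bounded above by constants independent of $k$.

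Next I would re-establish a uniform lower bound $\alpha_k \geq \bar\alpha > 0$ on the step-length, mirroring the reasoning of Lemmas \ref{Lemma step-length-part 1} and \ref{Lemma step-length-part 2}, but substituting Premise \ref{Premise 1} and the boundedness just obtained for Assumptions \ref{Assumption 1}--\ref{Assumption 2}. Because $\mu_k \geq \mu^* > 0$, each condition defining membership in $\mathscr{N}_{\mu_k(\alpha)}(\Xi_k,\lambda_k)$ — the complementarity bound $\|H_{P_k(\alpha)}(X_k(\alpha)Z_k(\alpha)) - \mu_k(\alpha)I_n\|_F \leq \gamma_{\mu}\mu_k(\alpha)$, the centering inequality $\langle X_k(\alpha),Z_k(\alpha)\rangle \geq (1-\alpha(1-\tfrac{\sigma_{\min}}{2}))\langle X_k,Z_k\rangle$, and the $2$-norm and semi-norm residual conditions — holds for all $\alpha \in (0,\bar\alpha]$, where $\bar\alpha > 0$ is independent of $k$ (it may depend on $n$, $m$, $\mu^*$, and the uniform bounds on $\|y_k-\lambda_k\|_2$, $\|X_k-\Xi_k\|_F$, and the directions). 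The structural reason is the same as in the feasible analysis: every obstructive second-order term carries a factor $\alpha^2$ times a bounded quantity, whereas the beneficial first-order terms carry $\alpha$ times a quantity bounded below by a positive multiple of $\mu_k \geq \mu^*$, so a fixed positive threshold on $\alpha$ keeps each residual within tolerance. Since Algorithm \ref{Algorithm PMM-IPM} selects the largest admissible $\alpha$, we obtain $\alpha_k \geq \bar\alpha$ for every $k$.

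Finally, combining $\alpha_k \geq \bar\alpha$ with $\mu_{k+1} \leq (1-0.01\alpha_k)\mu_k$ gives $\mu_{k+1} \leq (1-0.01\bar\alpha)\mu_k$ for all $k$, so $\mu_k \to 0$, contradicting $\mu_k \geq \mu^* > 0$. Therefore $\mu^* = 0$ and $\{\mu_k\} \to 0$. The main obstacle is the third step: carrying out the step-length lower bound in the infeasible regime, where Assumptions \ref{Assumption 1}--\ref{Assumption 2} are unavailable to bound the iterates or the data. One must lean entirely on Premise \ref{Premise 1} and on the uniform boundedness of the Newton direction, and verify in particular that the semi-norm residual condition in \eqref{Small neighbourhood} can still be maintained for $\alpha$ bounded away from zero — this is the most delicate point, since Assumption \ref{Assumption 2} was previously what controlled the comparison between the $2$-norm and the semi-norm.
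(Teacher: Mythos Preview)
Your approach is essentially identical to the paper's: argue by contradiction, invoke Lemma~\ref{Lemma infeasibility bounded Newton} to bound the Newton direction uniformly, and then replay the step-length analysis of Lemmas~\ref{Lemma step-length-part 1}--\ref{Lemma step-length-part 2} with these bounds in place of the ones coming from Assumptions~\ref{Assumption 1}--\ref{Assumption 2}, obtaining a uniform $\bar\alpha>0$ and hence $\mu_k\to 0$.

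The one point where you diverge is precisely the one you flag as ``most delicate'': you attempt to verify the semi-norm residual condition as well, and rightly note that without Assumption~\ref{Assumption 2} you have no control over $\|\cdot\|_{\mathcal{S}}$ in terms of $\|\cdot\|_2$. The paper resolves this not by proving it but by dropping it: in this section Assumptions~\ref{Assumption 1}--\ref{Assumption 2} are not in force, so $\mathrm{rank}(A)=m$ need not hold, and the paper explicitly remarks that the semi-norm restriction ``is not required here, and indeed it cannot be incorporated unless $\mathrm{rank}(A)=m$.'' Accordingly, the functions checked in the paper's proof are only $f_1,f_2,f_3$ and the $2$-norm residual $g_2$; the semi-norm condition is simply omitted from the neighbourhood in this regime. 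Once you remove that condition from your list, your argument goes through exactly as written.
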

\begin{proof}
\noindent Assume, by virtue of contradiction, that $\mu_k > \varepsilon > 0$, $\text{for all}\ k \geq 0$. Then, we know (from Lemma \ref{Lemma infeasibility bounded Newton}) that the Newton direction obtained by the algorithm at every iteration, after solving (\ref{inexact vectorized Newton System}), will be uniformly bounded by a constant dependent only on $n$, that is, there exists a positive constant $K^{\dagger}$, such that $\|(\Delta X_k,\Delta y_k,\Delta Z_k)\|_2 \leq K^{\dagger}$. We define $\tilde{r}_p(\alpha)$ and  $\bm{\tilde{R}}_d(\alpha)$ as in \eqref{primal infeasibility vector} and \eqref{dual infeasibility matrix}, respectively, for which we know that equalities \eqref{primal infeasibility formula} and \eqref{dual infeasibility formula} hold, respectively.
 Take any $k \geq 0$ and define the following functions:
\begin{equation*}
\begin{split}
f_1(\alpha) \coloneqq \ &\langle X_k(\alpha), Z_k(\alpha)\rangle - (1 -\alpha(1-\frac{\sigma_{\min}}{2}))\langle X_k, Z_k\rangle,\\
f_2(\alpha) \coloneqq \ & \gamma_{\mu}\mu_k(\alpha) - \|H_{{P_k}(\alpha)}(X_k(\alpha)Z_k(\alpha)) - \mu_k(\alpha)\|_F \\
f_3(\alpha) \coloneqq \ & (1-0.01\alpha)\langle X_k,Z_k\rangle - \langle X_k(\alpha), Z_k(\alpha)\rangle,\\
g_{2}(\alpha) \coloneqq \ & \frac{\mu_k(\alpha)}{\mu_0}K_N - \|(\tilde{r}_p(\alpha),\bm{\tilde{R}}_d(\alpha))\|_2, 
\end{split}
\end{equation*}
\noindent where $\mu_k(\alpha) = \frac{\langle X_k + \alpha \Delta X_k, Z_k + \alpha \Delta Z_k\rangle}{n}$, $(X_k(\alpha),y_k(\alpha),Z_k(\alpha)) = (X_k + \alpha \Delta X_k,y_k + \alpha \Delta y_k,Z_k + \alpha \Delta Z_k)$. We would like to show that there exists $\alpha^* > 0$, such that:
$$f_1(\alpha) \geq 0,\quad f_2(\alpha) \geq 0,\quad f_3(\alpha) \geq 0,\quad g_2(\alpha) \geq 0,\ \text{for all}\ \alpha \in (0,\alpha^*].$$
\noindent These conditions model the requirement that the next iteration of Algorithm \ref{Algorithm PMM-IPM} must lie in the updated neighbourhood $\mathcal{N}_{\mu_{k+1}}(\Xi_k,\lambda_{k})$ (notice however that the restriction with respect to the semi-norm defined  in \eqref{semi-norm definition} is not required here, and indeed it cannot be incorporated unless $\textnormal{rank}(A) = m$). Since Algorithm \ref{Algorithm PMM-IPM} updates the parameters $\lambda_k,\ \Xi_k$ only if the selected new iterate belongs to the new neighbourhood, defined using the updated parameters (again, ignoring the restrictions with respect to the semi-norm), it suffices to show that $(X_{k+1},y_{k+1},Z_{k+1}) \in \mathcal{N}_{\mu_{k+1}}(\Xi_k,\lambda_{k})$. 
\par Proving the existence of $\alpha^* > 0$, such that each of the aforementioned functions is positive, follows exactly the developments in Lemmas \ref{Lemma step-length-part 1}--\ref{Lemma step-length-part 2}, with the only difference being that the bounds on the directions are not explicitly specified in this case. Using the same methodology as in aforementioned lemmas, while keeping in mind our assumption, namely $\langle X_k, Z_k \rangle> \varepsilon$, we can show that:

\begin{equation} \label{infeasibility minimum step-length}
\alpha^* \coloneqq \min \bigg\{1,\frac{\sigma_{\min}\epsilon}{2(K^{\dagger})^2}, \frac{(1-\gamma_{\mu})\sigma_{\min}{\gamma_{\mu}\epsilon}}{2n(K^{\dagger})^2},\frac{0.49\epsilon}{2(K^{\dagger})^2}, \frac{\sigma_{\min}K_N \epsilon}{4\mu_0(\xi_2)} \bigg\},
\end{equation}
\noindent where $\xi_2$ is a bounded constant, defined as in \eqref{step-length, auxiliary constants}, and dependent on $K^{\dagger}$. However, using the inequality:
$$\mu_{k+1} \leq (1-0.01 \alpha)\mu_k,\ \text{for all}\ \alpha \in [0,\alpha^*]$$
\noindent we get that $\mu_k \rightarrow 0$, which contradicts our assumption that $\mu_k > \varepsilon,\ \forall\ k\geq 0$, and completes the proof.  
\end{proof}
\noindent Finally, using the following Theorem, we derive a necessary condition for lack of strong duality.
\begin{theorem} \label{Theorem Infeasibility condition}
Given Premise \textnormal{\ref{Premise 2}}, i.e. there does not exist a KKT triple for the pair \textnormal{(\ref{non-regularized primal})--(\ref{non-regularized dual})}, then Premise \textnormal{\ref{Premise 1}} fails to hold.
\end{theorem}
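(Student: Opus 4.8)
The plan is to argue by contraposition: assuming Premise~\ref{Premise 1} holds, I will construct a triple satisfying the KKT system \eqref{non-regularized F.O.C}, which directly contradicts Premise~\ref{Premise 2}. The engine for everything that follows is Lemma~\ref{Lemma infeasibility mu to zero}, which guarantees that under Premise~\ref{Premise 1} the barrier parameter satisfies $\mu_k \to 0$. The strategy is then to pass to the limit in the neighbourhood conditions, recover the unregularized optimality conditions in the limit, and exhibit a limit point that is a genuine KKT triple.

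Concretely, I would take limits in the two defining equalities of $\mathscr{N}_{\mu_k}(\Xi_k,\lambda_k)$ from \eqref{Small neighbourhood}. Every perturbation term carries a factor $\mu_k$ or $\tfrac{\mu_k}{\mu_0}$: the regularization contributions $\mu_k(y_k-\lambda_k)$ and $\mu_k(\bm{X}_k-\bm{\Xi}_k)$ vanish because Premise~\ref{Premise 1} keeps $\|y_k-\lambda_k\|_2$ and $\|X_k-\Xi_k\|_F$ bounded while $\mu_k\to 0$, and the scaled infeasibilities $\tfrac{\mu_k}{\mu_0}(\bar b+\tilde b_k)$ and $\tfrac{\mu_k}{\mu_0}(\bm{\bar C}+\bm{\tilde C}_k)$ vanish because $(\tilde b_k,\bm{\tilde C}_k)$ is bounded by $K_N$ in the neighbourhood and $(\bar b,\bm{\bar C})$ is fixed by \eqref{starting point}. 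This yields $A\bm{X}_k\to b$ and $A^\top y_k+\bm{Z}_k\to \bm{C}$, while $\langle X_k,Z_k\rangle = n\mu_k\to 0$ with $X_k,Z_k\in\mathcal{S}^n_{++}$. If a bounded subsequence of $\{(X_k,y_k,Z_k)\}$ can be extracted, its limit $(X^*,y^*,Z^*)$ satisfies $\mathcal{A}X^*=b$, $\mathcal{A}^*y^*+Z^*=C$, $\langle X^*,Z^*\rangle=0$ and $X^*,Z^*\succeq 0$, hence solves \eqref{non-regularized F.O.C}, contradicting Premise~\ref{Premise 2}.

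The main obstacle is precisely this boundedness, since in this section Assumption~\ref{Assumption 2} has been dropped, so $A$ need not have full row rank and the residual limits above do not on their own control the norms of the iterates. I would resolve it through the proximal interpretation of the updates of $(\Xi_k,\lambda_k)$, splitting into two regimes. If the proximal estimates are refreshed only finitely often, they stabilize at a fixed bounded pair $(\bar\Xi,\bar\lambda)$; Premise~\ref{Premise 1} then bounds $X_k$ and $y_k$, and the bounded dual residual forces $\bm{Z}_k$ to be bounded as well, producing the required convergent subsequence. If instead the estimates are refreshed infinitely often, then along the update indices $(\Xi_k,\lambda_k)=(X_k,y_k)$ is an approximate minimizer of the associated PMM sub-problem, and I would invoke the firm non-expansiveness of the proximal operator $\mathcal{P}$ (as in Lemma~\ref{Lemma non-expansiveness}), together with Premise~\ref{Premise 1}, to control the estimate sequence and transfer boundedness to the iterates. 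This infinite-refresh, rank-deficient case is the delicate part: it is where the boundedness granted by Premise~\ref{Premise 1} must be reconciled with the proximal-point dynamics to force the existence of the limiting KKT triple and thereby close the contradiction.
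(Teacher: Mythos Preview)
Your core strategy---contraposition, invoking Lemma~\ref{Lemma infeasibility mu to zero} to force $\mu_k\to 0$, then using the neighbourhood conditions together with Premise~\ref{Premise 1} to show that the unregularized residuals $A\bm{X}_k-b$, $A^\top y_k+\bm{Z}_k-\bm{C}$ and $\langle X_k,Z_k\rangle$ all tend to zero along a subsequence---coincides exactly with the paper's argument. The paper, however, stops right there: once the residuals vanish asymptotically it declares the contradiction with Premise~\ref{Premise 2} and closes the proof. It does \emph{not} establish boundedness of $(X_k,y_k,Z_k)$, nor does it extract a genuine limit point satisfying \eqref{non-regularized F.O.C}.

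Your additional effort to secure boundedness is therefore more than the paper attempts, and your instinct that this is where a fully rigorous argument must go is correct. The finite-refresh branch of your case split is sound: once $(\Xi_k,\lambda_k)$ stabilizes, Premise~\ref{Premise 1} bounds $(X_k,y_k)$ and the vanishing dual residual then bounds $Z_k$. The infinite-refresh branch, however, is circular. Lemma~\ref{Lemma non-expansiveness} presupposes the existence of a pair $(X^*,y^*)$ with $(0,0)\in T_{\mathcal{L}}(X^*,y^*)$---which is precisely the existence of a KKT triple for \eqref{non-regularized primal}--\eqref{non-regularized dual}---and that is exactly what you are trying to establish; it is also stated under Assumption~\ref{Assumption 1}, which has been dropped in this section. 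You cannot invoke non-expansiveness towards a fixed point not yet known to exist. If your aim is to match the paper, you may simply omit the boundedness discussion and end at the asymptotic residual limits, as the paper does; if your aim is to close the gap the paper leaves open, the infinite-refresh case needs a different mechanism than Lemma~\ref{Lemma non-expansiveness}.
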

\begin{proof}
\noindent By virtue of contradiction, let Premise \ref{Premise 1} hold. In Lemma \ref{Lemma infeasibility mu to zero}, we proved that given Premise \ref{Premise 1}, Algorithm \ref{Algorithm PMM-IPM} produces iterates that belong to the neighbourhood (\ref{Small neighbourhood}) and $\mu_k \rightarrow 0$. But from the neighbourhood conditions we can observe that:
$$\|A\bm{X}_k + \mu_k(y_k - \lambda_k) - b - \frac{\mu_k}{\mu_0}\bar{b} \|_2 \leq K_N\frac{\mu_k}{\mu_0},$$
\noindent and
$$\|A^\top y_k + \bm{Z}_k - \mu_k(\bm{X}_k - \bm{\Xi}_k)-\bm{C}-\frac{\mu_k}{\mu_0}\bm{\bar{C}}\|_2 \leq K_N \frac{\mu_k}{\mu_0}.$$
\noindent Hence, we can choose a sub-sequence $\mathcal{K} \subseteq \mathbb{N}$, for which:
$$\{A\bm{X}_k + \mu_k(y_k - \lambda_k) - b - \frac{\mu_k}{\mu_0}\bar{b} \}_{\mathcal{K}} \rightarrow 0,\ \text{and} \ \{A^\top y_k + \bm{Z}_k - \mu_k(\bm{X}_k - \bm{\Xi}_k)-\bm{C}-\frac{\mu_k}{\mu_0}\bm{\bar{C}}\}_{\mathcal{K}} \rightarrow 0.$$
\noindent But since $\|y_k-\lambda_k\|_2$ and $\|X_k - \Xi_k\|_F$ are bounded, while $\mu_k \rightarrow 0$, we have that:
$$\{A\bm{X}_k - b\}_{\mathcal{K}} \rightarrow 0,\ \{\bm{C} - A^\top  y_k - \bm{Z}_k\}_{\mathcal{K}} \rightarrow 0,\ \text{and}\ \{\langle X_k, Z_k\rangle\}_{\mathcal{K}} \rightarrow 0.$$
\noindent This contradicts Premise \ref{Premise 2}, i.e. that the pair (\ref{non-regularized primal})--(\ref{non-regularized dual}) does not have a KKT triple, and completes the proof.  
\end{proof}

\par In the previous Theorem, we proved that the negation of Premise \ref{Premise 1} is a necessary condition for Premise \ref{Premise 2}. Nevertheless, this does not mean that the condition is also sufficient. In order to obtain a more reliable algorithmic test for lack of strong duality, we have to use the properties of Algorithm \ref{Algorithm PMM-IPM}. In particular, we can notice that if there does not exist a KKT point, then the PMM sub-problems will stop being updated after a finite number of iterations. In that case, we know from Theorem \ref{Theorem Infeasibility condition} that the sequence $\|(\bm{X}_k- \bm{\Xi}_k,y_k -\lambda_k)\|_2$ will grow unbounded. Hence, we can define a maximum number of iterations per PMM sub-problem, say $k_{\dagger} > 0$, as well as a very large constant $K_{\dagger}$. Then, if $\|(\bm{X}_k- \bm{\Xi}_k,y_k -\lambda_k)\|_2 >  K_{\dagger}$ and $k_{in} \geq k_{\dagger}$ (where $k_{in}$ counts the number of IPM iterations per PMM sub-problem), the algorithm is terminated with a guess that there does not exist a KKT point for \eqref{non-regularized primal}--\eqref{non-regularized dual}.

\begin{remark}
Let us notice that the analysis in Section \textnormal{\ref{section Polynomial Convergence}} employs the standard assumptions used when analyzing a non-regularized IPM. However, the method could still be useful if these assumptions were not met. Indeed, if for example the constraint matrix was not of full row rank, one could still prove global convergence of the method, using the methodology employed in this section by assuming that Premise \textnormal{\ref{Premise 1}} holds and Premise \textnormal{\ref{Premise 2}} does not (or by following the developments in \textnormal{\cite{Dehg_Goff_Orban_OMS}}). Furthermore, in practice the method would not encounter any numerical issues with the inversion of the Newton system (see \textnormal{\cite{ArmandBenoist_MATH_PROG}}). Nevertheless, showing polynomial complexity in this case is still an open problem. The aim of this work is to show that under the standard Assumptions \textnormal{\ref{Assumption 1}, \ref{Assumption 2}}, Algorithm \ref{Algorithm PMM-IPM} is able to enjoy polynomial complexity, while having to solve better conditioned systems than those solved by standard IPMs at each iteration, thus ensuring better stability (and as a result better robustness and potentially better efficiency).
\end{remark}

\section{Conclusions} \label{section conclusions}
\par 
In this paper we developed and analyzed an interior point-proximal method of multipliers, suitable for solving linear positive semi-definite programs, without requiring the exact solution of the associated Newton systems. By generalizing appropriately some previous results on convex quadratic programming, we show that IP-PMM inherits the polynomial complexity of standard non-regularized IPM schemes when applied to SDP problems, under standard assumptions, while having to approximately solve better-conditioned Newton systems, compared to their non-regularized counterparts. Furthermore, we provide a tuning for the proximal penalty parameters based on the well-studied barrier parameter, which can be used to guide any subsequent implementation of the method. Finally, we study the behaviour of the algorithm when applied to problems for which no KKT point exists, and give a necessary condition which can be used to construct detection mechanisms for identifying such pathological cases. 
\par 
A future research direction would be to construct a robust and efficient implementation of the method, which should utilize some Krylov subspace solver alongside an appropriate preconditioner for the solution of the associated linear systems. Given previous implementations of IP-PMM for other classes of problems appearing in the literature, we expect that the theory can successfully guide the implementation, yielding a competitive, robust, and efficient method.

\bibliography{references} 

\begin{thebibliography}{10}

% this bibliography is generated by abbrvdin.bst [8.2] from 2005-12-21

\providecommand{\url}[1]{\texttt{#1}}
\expandafter\ifx\csname urlstyle\endcsname\relax
  \providecommand{\doi}[1]{doi: #1}\else
  \providecommand{\doi}{doi: \begingroup \urlstyle{rm}\Url}\fi

\bibitem[1]{AltmanGondzio_OMS}
\textsc{Altman}, A.  ; \textsc{Gondzio}, J. :
\newblock {R}egularized {S}ymmetric {I}ndefinite {S}ystems in {I}nterior
  {P}oint {M}ethods for {L}inear and {Q}uadratic {O}ptimization.
\newblock {In: }\emph{Optimization Methods and Software} 11 (1999), Nr. 1--4,
  S. 275--302.
\newblock  \url{http://dx.doi.org/10.1080/10556789908805754}. --
\newblock DOI 10.1080/10556789908805754

\bibitem[2]{ArmandBenoist_MATH_PROG}
\textsc{Armand}, P.  ; \textsc{Benoist}, J. :
\newblock {U}niform {B}oundedness of the {I}nverse of a {J}acobian {M}atrix
  {A}rising in {R}egularized {I}nterior-{P}oint {M}ethods.
\newblock {In: }\emph{Mathematical Programming} 137 (2013), S. 587--592.
\newblock  \url{http://dx.doi.org/10.1007/s10107-011-0498-3}. --
\newblock DOI 10.1007/s10107--011--0498--3

\bibitem[3]{BalkaWang_BOOK_SPRINGER}
\textsc{Balakrishnan}, V.  ; \textsc{Wang}, F. :
\newblock \emph{Handbook of Semidefinite Programming. {I}n: {W}olkowicz, {H}.,
  {S}aigal, {R}., {V}andenberghe, {L}. (eds.) {H}andbook of {S}emidefinite
  {P}rogramming: {T}heory, {A}lgorithms and {A}pplications. International
  Series in Operations Research \& Management Science, vol 27, 421--441}.
\newblock Springer, Boston, MA, 2000.
\newblock  \url{http://dx.doi.org/10.1007/978-1-4615-4381-7_14}.
\newblock  \url{http://dx.doi.org/10.1007/978-1-4615-4381-7_14}. --
\newblock ISBN 978--1--4613--6970--7

\bibitem[4]{Bella_Gondz_Porc_arxiv}
\textsc{Bellavia}, S.  ; \textsc{Gondzio}, J.   ; \textsc{Porcelli}, M. :
\newblock {A} {R}elaxed {I}nterior {P}oint {M}ethod for {L}ow-{R}ank
  {S}emidefinite {P}rogramming {P}roblems.
\newblock {In: }\emph{arXiv:1909.06099 [math.NA] (Accepted for publication in
  the Journal of Scientific Computing)}  (2019)

\bibitem[5]{Bella_Gondz_Porc_MATH_PROG}
\textsc{Bellavia}, S.  ; \textsc{Gondzio}, J.   ; \textsc{Porcelli}, M. :
\newblock {A}n {I}nexact {D}ual {L}ogarithmic {B}arrier {M}ethod for {S}olving
  {S}parse {S}emidefinite {P}rograms.
\newblock {In: }\emph{Mathematical Programming} 178 (2019), S. 109--143.
\newblock  \url{http://dx.doi.org/10.1007/s10107-018-1281-5}. --
\newblock DOI 10.1007/s10107--018--1281--5

\bibitem[6]{Ben-Tal_Nemi_MATH_OR}
\textsc{Ben-Tal}, A.  ; \textsc{Nemirovski}, A. :
\newblock {R}obust {C}onvex {O}ptimization.
\newblock {In: }\emph{Mathematics of Operational Research} 23 (1998), Nr. 4, S.
  769--805.
\newblock  \url{http://dx.doi.org/10.1287/moor.23.4.769}. --
\newblock DOI 10.1287/moor.23.4.769

\bibitem[7]{Berga_et_al_NLAA}
\textsc{Bergamaschi}, L.  ; \textsc{Gondzio}, J.  ; \textsc{Martínez}, A.  ;
  \textsc{Pearson}, J.~W.  ; \textsc{Pougkakiotis}, S. :
\newblock {A} {N}ew {P}reconditioning {A}pproach for an {I}nterior
  {P}oint-{P}roximal {M}ethod of {M}ultipliers for {L}inear and {C}onvex
  {Q}uadratic {P}rogramming.
\newblock {In: }\emph{Numerical Linear Algebra with Applications} 28 (2021),
  Nr. 4.
\newblock  \url{http://dx.doi.org/10.1002/nla.2361}. --
\newblock DOI 10.1002/nla.2361

\bibitem[8]{deSimone_et_al_arxiv}
\textsc{De~Simone}, V.  ; \textsc{Serafino}, D.  di ; \textsc{Gondzio}, J.  ;
  \textsc{Pougkakiotis}, S.   ; \textsc{Viola}, M. :
\newblock {S}parse {A}pproximations with {I}nterior {P}oint {M}ethods.
\newblock {In: }\emph{arXiv:2102.13608v2 [math.OC]}  (2021)

\bibitem[9]{Dehg_Goff_Orban_OMS}
\textsc{Dehghani}, A.  ; \textsc{Goffin}, J.~L.  ; \textsc{Orban}, D. :
\newblock {A} {P}rimal-{D}ual {R}egularized {I}nterior-{P}oint {M}ethod for
  {S}emidefinite {P}rogramming.
\newblock {In: }\emph{Optimization Methdods and Software} 32 (2017), Nr. 1, S.
  193--219.
\newblock  \url{http://dx.doi.org/10.1080/10556788.2016.1235708}. --
\newblock DOI 10.1080/10556788.2016.1235708

\bibitem[10]{GondPougkPear_arxiv}
\textsc{Gondzio}, J.  ; \textsc{Pougkakiotis}, S.   ; \textsc{Pearson}, J.~W.:
\newblock {G}eneral-{P}urpose {P}reconditioning for {R}egularized {I}nterior
  {P}oint {M}ethods.
\newblock {In: }\emph{arXiv:2107.06822 [math.OC]}  (2021)

\bibitem[11]{Gu_SIAM_J_OPT}
\textsc{Gu}, M. :
\newblock {P}rimal-{D}ual {I}nterior-{P}oint {M}ethods for {S}emidefinite
  {P}rogamming in {F}inite {P}recision.
\newblock {In: }\emph{SIAM Journal on Optimization} 10 (2000), S. 462--502.
\newblock  \url{http://dx.doi.org/10.1137/S105262349731950X}. --
\newblock DOI 10.1137/S105262349731950X

\bibitem[12]{JiangVander_OPT_ONLINE}
\textsc{Jiang}, X.  ; \textsc{Vandenberghe}, L. :
\newblock {B}regman {P}rimal-{D}ual {F}irst-{O}rder {M}ethod and {A}pplication
  to {S}parse {S}emidefinite {P}rogramming.
\newblock {In:
  }\emph{\url{http://www.optimization-online.org/DB_HTML/2020/03/7702.html}}
  (2020)

\bibitem[13]{LavaLow_IEEE_TRAN_POW_SYS}
\textsc{Lavaei}, J.  ; \textsc{Low}, S.~H.:
\newblock {Z}ero {D}uality {G}ap in {O}ptimal {P}ower {F}low {P}roblem.
\newblock {In: }\emph{IEEE Transactions on Power Systems} 27 (2012), Nr. 1, S.
  92--107.
\newblock  \url{http://dx.doi.org/10.1109/TPWRS.2011.2160974}. --
\newblock DOI 10.1109/TPWRS.2011.2160974

\bibitem[14]{Liu_Pataki_MATH_PROG}
\textsc{Liu}, M.  ; \textsc{Pataki}, G. :
\newblock {E}xact {D}uals and {S}hort {C}ertificates of {I}nfeasibility and
  {W}eak {I}nfeasibility in {C}onic {L}inear {P}rogramming.
\newblock {In: }\emph{Mathematical Programming} 167 (2018), S. 435--480.
\newblock  \url{http://dx.doi.org/10.1007/s10107-017-1136-5}. --
\newblock DOI 10.1007/s10107--017--1136--5

\bibitem[15]{MizuJarr_MATH_PROG}
\textsc{Mizuno}, S.  ; \textsc{Jarre}, F. :
\newblock {G}lobal and {P}olynomial-{T}ime {C}onvergence of an
  {I}nfeasible-{I}nterior-{P}oint {A}lgorithm using {I}nexact {C}omputation.
\newblock {In: }\emph{Mathematical Programming} 84 (1999), S. 105--122.
\newblock  \url{http://dx.doi.org/10.1007/s10107980020a}. --
\newblock DOI 10.1007/s10107980020a

\bibitem[16]{MOSEK}
\textsc{MOSEK}:
\newblock {T}he {MOSEK} {O}ptimization {S}oftware.
\newblock {In: }\emph{Link at \url{http://www.mosek.com, v9.2}}  (2020)

\bibitem[17]{NestNemir_BOOK_SIAM}
\textsc{Nesterov}, Y.  ; \textsc{Nemirovskii}, A.~S.:
\newblock \emph{{I}nterior {P}oint {P}olynomial {M}ethods in {C}onvex
  {P}rogramming: {T}heory and {A}lgorithms}.
\newblock SIAM Publications, SIAM, Philadelphia, USA, 1994.
\newblock  \url{http://dx.doi.org/10.1137/1.9781611970791}.
\newblock  \url{http://dx.doi.org/10.1137/1.9781611970791}. --
\newblock ISBN 978--0--89871--319--0

\bibitem[18]{PougkGond_JOTA}
\textsc{Pougkakiotis}, S.  ; \textsc{Gondzio}, J. :
\newblock {D}ynamic {N}on-{D}iagonal {R}egularization in {I}nterior {P}oint
  {M}ethods for {L}inear and {C}onvex {Q}uadratic {P}rogramming.
\newblock {In: }\emph{Journal of Optimization Theory and Applications} 181
  (2019), S. 905--945.
\newblock  \url{http://dx.doi.org/10.1007/s10957-019-01491-1}. --
\newblock DOI 10.1007/s10957--019--01491--1

\bibitem[19]{Pougk_Gond_COAP}
\textsc{Pougkakiotis}, S.  ; \textsc{Gondzio}, J. :
\newblock {A}n {I}nterior {P}oint-{P}roximal {M}ethod of {M}ultipliers for
  {C}onvex {Q}uadratic {P}rogramming.
\newblock {In: }\emph{Computational Optimization and Applications} 78 (2021),
  S. 307--351.
\newblock  \url{http://dx.doi.org/10.1007/s10589-020-00240-9}. --
\newblock DOI 10.1007/s10589--020--00240--9

\bibitem[20]{ROCK_Math_OR}
\textsc{Rockafellar}, R.~T.:
\newblock {Augmented {L}agrangians and {A}pplications of the {P}roximal {P}oint
  {A}lgorithm in {C}onvex {P}rogramming}.
\newblock {In: }\emph{Mathematics of Operations Research} 1 (1976), Nr. 2, S.
  97--116.
\newblock  \url{http://dx.doi.org/10.1287/moor.1.2.97}. --
\newblock DOI 10.1287/moor.1.2.97

\bibitem[21]{ROCK_SIAM_J_CONTROL_OPT}
\textsc{Rockafellar}, R.~T.:
\newblock {M}onotone {O}perators and the {P}roximal {P}oint {A}lgorithm.
\newblock {In: }\emph{SIAM Journal on Control and Optimization} 14 (1976), Nr.
  5, S. 877--898.
\newblock  \url{http://dx.doi.org/10.1137/0314056}. --
\newblock DOI 10.1137/0314056

\bibitem[22]{Rockafellar_BOOK_PRINCETON}
\textsc{Rockafellar}, R.~T.:
\newblock \emph{{C}onvex {A}nalysis}.
\newblock Princeton University Press, 1996.
\newblock  \url{http://dx.doi.org/10.1515/9781400873173}.
\newblock  \url{http://dx.doi.org/10.1515/9781400873173}. --
\newblock ISBN 9781400873173

\bibitem[23]{SaundersTomlin_Tech_Rep}
\textsc{Saunders}, M.  ; \textsc{Tomlin}, J.~A.:
\newblock {S}olving {R}egularized {L}inear {P}rograms {U}sing {B}arrier
  {M}ethods and {KKT} {S}ystems.
\newblock {In: }\emph{Technical Report SOL 96--4, Systems Optimization
  Laboratory, Department of Operational Research, Stanford University,
  Stanford, CA 94305, USA}  (1996)

\bibitem[24]{ShapSchein_BOOK_SPRINGER}
\textsc{Shapiro}, A.  ; \textsc{Scheinberg}, K. :
\newblock \emph{{D}uality and {O}ptimality {C}onditions. {I}n: {W}olkowicz,
  {H}., {S}aigal, {R}., {V}andenberghe, {L}. (eds.) {H}andbook of
  {S}emidefinite {P}rogramming: {T}heory, {A}lgorithms and {A}pplications.
  International Series in Operations Research \& Management Science, vol 27,
  67--110}.
\newblock Springer, Boston, MA, 2000.
\newblock  \url{http://dx.doi.org/10.1007/978-1-4615-4381-7_4}.
\newblock  \url{http://dx.doi.org/10.1007/978-1-4615-4381-7_4}. --
\newblock ISBN 978--1--4613--6970--7

\bibitem[25]{Souto_Garcia_Veiga_OPT}
\textsc{Souto}, M.  ; \textsc{Garcia}, J.~D.  ; \textsc{Veiga}, A. :
\newblock {E}xploiting {L}ow-{R}ank {S}tructure in {S}emidefinite {P}rogramming
  by {A}pproximate {O}perator {S}plitting.
\newblock {In: }\emph{Optimization}  (2020).
\newblock  \url{http://dx.doi.org/10.1080/02331934.2020.1823387}. --
\newblock DOI 10.1080/02331934.2020.1823387

\bibitem[26]{Todd_OTP_METH_SOFT}
\textsc{Todd}, M.~J.:
\newblock {A} {S}tudy of {S}earch {D}irections in {P}rimal-{D}ual
  {I}nterior-{P}oint {M}ethods for {S}emidefinite {P}rogramming.
\newblock {In: }\emph{Optimization Methods and Software} 11 (1999), S.
  545--581.
\newblock  \url{http://dx.doi.org/10.1080/10556789908805745}. --
\newblock DOI 10.1080/10556789908805745

\bibitem[27]{VandBoyd_APPL_NUM_MATH}
\textsc{Vandenberghe}, L.  ; \textsc{Boyd}, S. :
\newblock {A}pplications of {S}emidefinite {P}rogramming.
\newblock {In: }\emph{{A}pplied {N}umerical {M}athematics} 29 (1999), Nr. 3, S.
  283--299.
\newblock  \url{http://dx.doi.org/10.1016/S0168-9274(98)00098-1}. --
\newblock DOI 10.1016/S0168--9274(98)00098--1

\bibitem[28]{Weld_et_al_STRUCT_MULT_OPT}
\textsc{Weldeyesus}, A.~G. ; \textsc{Gondzio}, J.  ; \textsc{He}, L.  ;
  \textsc{Gilbert}, M.  ; \textsc{Sheperd}, P.   ; \textsc{Tyas}, A. :
\newblock {A}daptive {S}olution of {T}russ {L}ayout {O}ptimization {P}roblems
  with {G}lobal {S}tability {C}onstraints.
\newblock {In: }\emph{Structural and Multidisciplinary Optimization} 60 (2019),
  S. 2093--2111.
\newblock  \url{http://dx.doi.org/10.1007/s00158-019-02312-9}. --
\newblock DOI 10.1007/s00158--019--02312--9

\bibitem[29]{Zhang_SIAM_J_OPT}
\textsc{Zhang}, Y. :
\newblock On {E}xtending some {P}rimal-{D}ual {I}nterior-{P}oint {A}lgorithms
  from {L}inear {P}rogramming to {S}emidefinite {P}rogramming.
\newblock {In: }\emph{SIAM Journal on Optimization} 8 (1998), S. 365--386.
\newblock  \url{http://dx.doi.org/10.1137/S1052623495296115}. --
\newblock DOI 10.1137/S1052623495296115

\bibitem[30]{ZhaoSunToh_SIAM_J_OPT}
\textsc{Zhao}, X.~Y. ; \textsc{Sun}, D.   ; \textsc{Toh}, K.~C.:
\newblock {A} {N}ewton-{CG} {A}ugmented {L}agrangian {M}ethod for
  {S}emidefinite {P}rogramming.
\newblock {In: }\emph{SIAM Journal on Optimization} 20 (2010), Nr. 4, S.
  1737--1765.
\newblock  \url{http://dx.doi.org/10.1137/080718206}. --
\newblock DOI 10.1137/080718206

\bibitem[31]{ZhouToh_MATH_PROG}
\textsc{Zhou}, G.  ; \textsc{Toh}, K.~C.:
\newblock {P}olynomiality of an {I}nexact {I}nfeasible {I}nterior {P}oint
  {A}lgorithm for {S}emidefinite {P}rogramming.
\newblock {In: }\emph{Mathematical Programming} 99 (2004), Nr. A, S. 261--282.
\newblock  \url{http://dx.doi.org/10.1007/s10107-003-0431-5}. --
\newblock DOI 10.1007/s10107--003--0431--5

\end{thebibliography}
\bibliographystyle{abbrvdin}

\end{document}